\newcommand{\real}{\mathbbm{R}}
\newcommand{\nat}{\mathbb{N}}
\newtheorem{theorem}{Theorem}
\newtheorem{corollary}{Corollary}
\newtheorem{lemma}{Lemma}
\newtheorem{remark}{Remark}
\begin{document}
\begin{center}
  {\bf \Large Balanced truncation for model order reduction \\[0.5ex]
    of linear dynamical systems \\[1.5ex]
    with quadratic outputs}

  \vspace{5mm}

  \vspace{0.5cm}   

{\large Roland~Pulch$\mbox{}^1$\footnote{corresponding author} and Akil Narayan$\mbox{}^2$}

\vspace{0.1cm}

\begin{small}

{$\mbox{}^1$Institute of Mathematics and Computer Science, 
University of Greifswald, \\
Walther-Rathenau-Str.~47, 
D-17489 Greifswald, Germany.}\\
Email: {\tt roland.pulch@uni-greifswald.de}

{$\mbox{}^2$Scientific Computing and Imaging Institute,
  and Department of Mathematics, \\
  University of Utah, 
  72~Central Campus Dr., Salt Lake City, UT 84112,
  United States.}\\
  Email: {\tt akil@sci.utah.edu}

\end{small}
  
\end{center}

\bigskip\bigskip

%%%%%%%%%%%%%%%%%%%%%%%%%%%%%%%%%%%%%%%%%%%%%%%%%%%%%%%%%%%%%%%%%%%%%%%%%%%%%
%%%                         Abstract                                      %%%
%%%%%%%%%%%%%%%%%%%%%%%%%%%%%%%%%%%%%%%%%%%%%%%%%%%%%%%%%%%%%%%%%%%%%%%%%%%%%

\begin{center}
{Abstract}

\begin{tabular}{p{13cm}}

  We investigate model order reduction (MOR) for linear dynamical systems,
  where a quadratic output is defined as a quantity of interest.
  The system can be transformed into a linear dynamical system with
  many linear outputs.
  MOR is feasible by the method of balanced truncation,
  but suffers from the large number of outputs in approximate methods.
  To ameliorate this shortcoming we derive an equivalent quadratic-bilinear
  system with a single linear output and analyze the properties of this system.
  We examine MOR for this system via the technique of balanced truncation,
  which requires a stabilization of the system.
  Therein, the solution of two quadratic Lyapunov equations is traced back
  to the solution of just two linear Lyapunov equations.
  We present numerical results for several test examples
  comparing the two MOR approaches.

  \bigskip

Keywords: 

linear dynamical system,
quadratic-bilinear system,
model order reduction,
balanced truncation,
Lyapunov equation,
Hankel singular values.

\bigskip

MSC2010 classification: 65L05, 34C20, 93B40.

\bigskip

\mbox{}
\end{tabular}
\end{center}

\thispagestyle{plain}

\clearpage

\markright{R.~Pulch, A.~Narayan: Balanced truncation for quadratic outputs}

%%%%%%%%%%%%%%%%%%%%%%%%%%%%%%%%%%%%%%%%%%%%%%%%%%%%%%%%%%%%%%%%%%%%%%%%%%%%%
%%%                        Introduction                                   %%%
%%%%%%%%%%%%%%%%%%%%%%%%%%%%%%%%%%%%%%%%%%%%%%%%%%%%%%%%%%%%%%%%%%%%%%%%%%%%%

\section{Introduction}

A mathematical modeling of physical or industrial applications often
yields dynamical systems.
Automatic model generation typically generates dynamical systems
of enormous dimension,
and thus repeated transient simulations may become too costly.
In this situation methods of model order reduction (MOR) are required to decrease
the computational complexity of the system.
Efficient MOR techniques already exist for linear dynamical systems,
see~\cite{antoulas,benner,freund,gugercin,schilders}.
MOR by balanced truncation or moment matching, for example,
is based on an approximation of the input-output mapping,
which can be described by a transfer function in the frequency domain.
In contrast, the design of efficient and accurate algorithms for MOR of
{\em nonlinear} dynamical systems is still a challenging task.

In this paper we investigate linear dynamical systems in the form of
ordinary differential equations.
However, the output quantity of interest for our system is the time-dependent trajectory of a quadratic function of the state.
Differential equations with quadratic outputs arise in mechanical applications like mass-spring-damper systems, see~\cite{depken}, for example.
Maxwell's equations with quadratic outputs were examined
using finite element methods in~\cite{hammerschmidt}. 
In stochastic models, the variance of a quantity of interest represents a quadratic function, cf.~\cite{haasdonk}. 
This nonlinear (quadratic) dependence of outputs on inputs precludes the direct applicability of transfer function methods for linear time-invariant systems.

There is some existing literature that addresses MOR for linear dynamical systems with quadratic outputs: Van~Beeumen et al.~\cite{beeumen-meerbergen,beeumen2012} consider
a linear dynamical system with a single quadratic output
in the frequency domain.
Those authors transform this system into an equivalent form with multiple linear outputs. 
This approach is also applicable to our case of a linear dynamical system with
a single quadratic output in the time domain, but often suffers from a very large number of outputs and hence is computationally expensive.

The theory of differential balancing was designed for general nonlinear
dynamical systems, see~\cite{kawano-scherpen-2017,scherpen}.
This concept also applies to our case of linear dynamical systems
with quadratic outputs.
Concerning the two involved Gramian matrices, one is constant and the other
is state-dependent.
Thus the numerical solution of the defining matrix-valued equations becomes
too costly for an efficient MOR.

The strategy we employ is to derive a quadratic-bilinear dynamical system,
whose single linear output coincides with the quadratic output of
the original linear dynamical system.
This allows us to leverage MOR methods for quadratic-bilinear systems,
e.g.~\cite{ahmad,benner-breiten}.
We find the balanced truncation technique introduced by
Benner and Goyal~\cite{benner-goyal} particularly effective.
Therein, quadratic Lyapunov equations must be solved,
whose solutions are constant Gramian matrices.
We perform a stabilization of our quadratic-bilinear system to ensure the solvability of the Lyapunov equations.
When analyzing the structure of the resulting quadratic-bilinear system,
MOR can be accomplished by solving just two linear Lyapunov equations.
In particular, approximate methods like the alternating direction
implicit (ADI) scheme can be used to solve the linear Lyapunov equations
numerically \cite{li-white,lu-wachspress,mmess,penzl,penzl-sisc}. 

The paper is organized as follows.  We derive both the linear dynamical system with multiple outputs and the quadratic-bilinear system with single output in
Section~\ref{sec:problem-def}.
The balanced truncation technique is applied to the quadratic-bilinear
system in Section~\ref{sec:truncation}.
Therein, we analyze the structure and deduce an efficient solution
of the Lyapunov equations.
Section~\ref{sec:example} presents results of numerical computations
for three test examples, and both MOR approaches are compared
with respect to accuracy and computation work.

%%%%%%%%%%%%%%%%%%%%%%%%%%%%%%%%%%%%%%%%%%%%%%%%%%%%%%%%%%%%%%%%%%%%%%%%%%%%%
%%%                     Problem Definition                                %%%
%%%%%%%%%%%%%%%%%%%%%%%%%%%%%%%%%%%%%%%%%%%%%%%%%%%%%%%%%%%%%%%%%%%%%%%%%%%%%

\section{Problem definition: Linear dynamical \\
  systems with quadratic outputs}
\label{sec:problem-def}
%We define the considered class of problems now.

%%%%%%%%%%%%%%%%%%%%%%%%%%%%%%%%%%%%%%%%%%%%%%%%%%%%%%%%%%%%%%%%%%%%%%%%%%%%%
%\subsection{Linear dynamical systems with quadratic outputs}
Let a linear time-invariant system be given with a quadratic output
in the form
\begin{align} \label{dynamicalsystem}
\begin{split}  
\dot{x}(t) & = A x(t) + B u(t) \\[1ex] 
y(t) & = x(t)^\top M x(t) . 
\end{split} 
\end{align}
The state variables $x : [0,t_{\rm end}] \rightarrow \real^n$ are determined
by the matrix $A \in \real^{n \times n}$,
the matrix $B \in \real^{n \times n_{\rm in}}$
and given inputs $u : [0,t_{\rm end}] \rightarrow \real^{n_{\rm in}}$.
We suppose that the number of inputs is small ($n_{\rm in} \ll n$).
We assume that the system is asymptotically stable, i.e.,
all eigenvalues of~$A$ exhibit a (strictly) negative real part.
An initial value problem is defined by
\begin{equation} \label{ivp}
  x(0) = x_0
\end{equation}
with a predetermined $x_0 \in \real^n$.
The quantity of interest~$y : [0,t_{\rm end}] \rightarrow \real$
represents a quadratic output defined by the symmetric
matrix $M \in \real^{n \times n}$.
The system~(\ref{dynamicalsystem}) is multiple-input-single-output (MISO).

%%%%%%%%%%%%%%%%%%%%%%%%%%%%%%%%%%%%%%%%%%%%%%%%%%%%%%%%%%%%%%%%%%%%%%%%%%%%%
\begin{remark} \label{remark:symm}
Note that, if $T \in \real^{n \times n}$ is any matrix, then 
% \label{eq:symmetrize}
$$  x^\top T x = x^\top \left( {\textstyle \frac{1}{2}} (T + T^\top ) \right) x
  \qquad \mbox{for all} \;\; x \in \real^n . $$
Thus we can assume that the matrix $M$ in \eqref{dynamicalsystem} is symmetric (since the relation above implies it can be replaced by its symmetric part).
\end{remark}
%%%%%%%%%%%%%%%%%%%%%%%%%%%%%%%%%%%%%%%%%%%%%%%%%%%%%%%%%%%%%%%%%%%%%%%%%%%%%

We assume a situation with large dimension $n$.
(See Section~\ref{sec:example} for examples of such systems.)
It is in this regime when MOR algorithms are advantageous.
The aim of MOR is to construct a (linear or quadratic) dynamical system
of a much lower dimension $r \ll n$, and an associated output $\bar{y}$, such that the difference $y - \bar{y}$ should be sufficiently small for all relevant times.

%%%%%%%%%%%%%%%%%%%%%%%%%%%%%%%%%%%%%%%%%%%%%%%%%%%%%%%%%%%%%%%%%%%%%%%%%%%%%
\subsection{Transformation to linear dynamical systems \\ with multiple outputs}
For comparison, we adopt the approach in \cite{beeumen2012},
where a linear dynamical system with a single quadratic output is transformed into a linear dynamical system with multiple (linear) outputs.
We can consider again the system~(\ref{dynamicalsystem}) but with a
new output of interest $z$:
\begin{align} \label{simo-system}
\begin{split}  
\dot{x}(t) & = A x(t) + B u(t), \\[1ex] 
z(t) & = L^\top x(t).
\end{split} 
\end{align}
The matrix $L \in \real^{n \times m}$ is any matrix satisfying $M = L L^\top$
with $m = {\rm rank}(M) \le n$ and $z : [0,t_{\rm end}] \rightarrow \real^m$.
The initial values are as in~(\ref{ivp}).
The system~(\ref{simo-system}) is multiple-input-multiple-output (MIMO).
The relationship between $L$ and $M$ immediately yields a relationship between the output $z$ in \eqref{simo-system} and the output $y$ in \eqref{dynamicalsystem}:
\begin{equation} \label{output-y-z}
  y(t) = x(t)^\top M x(t) = (L^\top x(t))^\top (L^\top x(t)) =
  z(t)^\top z(t) = \| z(t) \|_2^2,
\end{equation}
%with the quadratic output~$y$ from~(\ref{dynamicalsystem})
where $\|\cdot\|_2$ is the Euclidean norm.

The matrix $L$ can be identified via an arbitrary symmetric decomposition $M = L L^\top$. For example, if $M$ is positive semi-definite, then the pivoted Cholesky factorization \cite{higham} provides one method of symmetric decomposition.
%In the case of a positive definite matrix~$M$ (full rank),
%the Cholesky factorization is feasible.
%In the case of a positive semi-definite matrix~$M$,
%a Cholesky decomposition with pivoting, see~\cite{higham},
%yields the matrix~$L$.
Negative semi-definite matrices~$M$ can be replaced by $-M$
and the negative Euclidean norm from~(\ref{output-y-z}) is used.
In the case of indefinite symmetric matrices, an eigen-decomposition
yields
\begin{equation} \label{Mdecomposition}
  M = S (D_+ - D_-) S^\top =
  S D_+ S^\top - S D_- S^\top \eqqcolon M_+ - M_-
\end{equation}
for an orthogonal matrix $S \in \real^{n \times n}$
and diagonal matrices $D_+ , D_-$ including the positive eigenvalues
and the modulus of the negative eigenvalues, respectively. The matrices $M_+$ and $M_-$ are both positive semi-definite so that the decomposition~(\ref{Mdecomposition}) can be used to construct matrices $L_{\pm}$ satisfying:
\begin{align*}
  M_{\pm} &= L_{\pm} L^{\top}_{\pm}, & L_{\pm} &\in \real^{n \times m_{\pm}}, & \mathrm{rank}(M) = m_+ + m_-.
\end{align*}
Now it holds that $M = L_+ L_+^\top - L_- L_-^\top$.
Thus we arrange the linear dynamical system in the form
\begin{align*}
  \begin{split}
    \dot{x}(t) & = A x(t) + B u(t) \\[1ex]
    \begin{pmatrix} z_+(t) \\ z_-(t) \\ \end{pmatrix} & =
    \begin{pmatrix} {L_+}^\top \\ {L_-}^\top \\ \end{pmatrix} x(t) , 
  \end{split}
\end{align*}
so that 
%\begin{align*}
  $y(t) = \| z_+ (t) \|_2^2 - \| z_- (t) \|_2^2$
%\end{align*}
relates the quadratic output of~\eqref{dynamicalsystem} to the linear outputs $z_{\pm}$.
%$y(t) = z_+^\top z_+ - z_-^\top z_-$
%for the quadratic output of~(\ref{dynamicalsystem}).

If the rank of the matrix~$M$ is low, then just a few outputs
arise in the system~(\ref{simo-system}) and efficient MOR methods are available to reduce the linear dynamical system.
However the situation becomes more difficult when the matrix $M$ has a large rank since MOR approaches often suffer limitations from the presence of a large number of outputs, with the extreme case given by a matrix~$M$ of full rank.
%\footnote{Specifically, the required MOR dimension to achieve accuracy is comparable to the full dimension of the system, so that little computational benefit is gained.}.
Some MOR methods achieve accuracy only when $\mathrm{rank}(M) \ll n$,
see~\cite[p.~231]{beeumen2012}, for example,
so that little to no computational benefit is gained in this case.
We will see for one of our capstone examples in Section \ref{sec:example} a realistic example where $M = I$, which presents a significant disadvantage to directly applying MOR to \eqref{simo-system}.
%On the other hand, the computational effort becomes huge
%in the case of a large number of outputs for many MOR techniques.

%%%%%%%%%%%%%%%%%%%%%%%%%%%%%%%%%%%%%%%%%%%%%%%%%%%%%%%%%%%%%%%%%%%%%%%%%%%%%
\subsection{Transformation to quadratic-bilinear systems }
\label{sec:trafo-quadratic}
This section presents a strategy to overcome the limitations described at the end of the previous section. The idea is to construct a dynamical system that includes the
quadratic quantity of interest $y$ as an additional state variable.
%In the time domain,
Differentiation of the quadratic output from~(\ref{dynamicalsystem})
yields $\dot{y} = 2 \dot{x}^\top M x$, and using \eqref{dynamicalsystem}
results in
%Inserting the differential equations for the state variables
%from~(\ref{dynamicalsystem}) results in
$$ \dot{y} = 2 \left( A x + B u \right)^\top M x =
x^\top (2 A^\top M) x + u^\top (2 B^\top M) x . $$
This reveals that $\dot{y}$ as a function of the state $x$ can be expressed as the sum of a quadratic term and a bilinear term. 
The matrix $2 A^\top M$ is not symmetric in general, but
Remark~\ref{remark:symm} shows that we can replace this matrix by
\begin{equation} \label{symmetricmatrix}
  S := A^\top M + M^\top A = A^\top M + M A .
\end{equation}
Furthermore, let $b_1,b_2,\ldots,b_{n_{\rm in}}$ be the columns of
the matrix~$B$.

By constructing a new vector $\tilde{x} \coloneqq (x^\top, \; y)^\top \in \real^{n+1}$, we obtain
\begin{align} \label{qdr-bil-system}
\begin{split}  
  \dot{\tilde{x}}(t) & = \tilde{A} \tilde{x}(t)
  + \tilde{B} u(t)
  + \displaystyle \sum_{j=1}^{n_{\rm in}} u_j(t) \tilde{N}_j \tilde{x}(t)
  + \tilde{H} ( \tilde{x}(t) \otimes \tilde{x}(t) ) \\
  \tilde{y}(t) & = \tilde{c}^\top \tilde{x}(t), 
\end{split} 
\end{align}
with
$$ \tilde{A} = \begin{pmatrix} A & 0 \\ 0 & 0 \\ \end{pmatrix} , \qquad
\tilde{B} = \begin{pmatrix} B \\ 0 \\ \end{pmatrix} , \qquad
\tilde{c}^\top = (0,\ldots,0,1) , \qquad 
% \tilde{c} = \begin{pmatrix} 0 \\ \vdots \\ 0  \\ 1 \\ \end{pmatrix} , \qquad
\tilde{N}_j = \begin{pmatrix} 0 & 0 \\ 2 b_j^\top M & 0 \\ \end{pmatrix} $$
for $j=1,\ldots,n_{\rm in}$.
The quadratic term involves the Kronecker product
$\tilde{x} \otimes \tilde{x} \in \real^{(n+1)^2}$.
The matrix $\tilde{H} \in \real^{(n+1) \times (n+1)^2}$ can be defined in terms of
the columns $s_1,\ldots,s_n \in \real^n$ of the symmetric matrix~$S$
from~(\ref{symmetricmatrix}):
\begin{equation} \label{structure-H}
\tilde{H} =
\begin{pmatrix}
  0_{n \times n} & 0_n & 0_{n \times n} & 0_n & \cdots & 0_{n \times n} & 0_n & 0_{n \times n} & 0_n \\
  s_1^\top & 0 & s_2^\top & 0 & \cdots & s_n^\top & 0 & 0_{1 \times n} & 0 \\
\end{pmatrix} ,
\end{equation}
where only the last row is occupied.
%$$ \tilde{H} = \begin{pmatrix} 0 \\ \tilde{h}_{n+1} \\ \end{pmatrix}
%\quad \mbox{with} \quad
%\tilde{h}_{n+1} =
%\begin{pmatrix}
%  s_1^\top & 0 & s_2^\top & 0 & \cdots & s_n^\top & 0 & 0 & 0 \\
%\end{pmatrix} . $$
% Since $\tilde{H}$ is a rank-1 matrix,
Thus the quadratic-bilinear system~(\ref{qdr-bil-system}) produces just a single linear output, which is identical to the $(n+1)$th state variable.  The initial values~(\ref{ivp}) imply that we must augment \eqref{qdr-bil-system} with 
$$ \tilde{x} (0) =
\begin{pmatrix} x_0 \\ x_0^\top M x_0 \\ \end{pmatrix} . 
$$
The quadratic output~$y$ of~(\ref{dynamicalsystem}) and the
linear output~$\tilde{y}$ of~(\ref{qdr-bil-system}) coincide.

Our approach is to apply MOR methods for quadratic-bilinear systems, where an advantage is that the system~(\ref{qdr-bil-system}) is MISO
with a low number of inputs by assumption.
We will show that the specific structure of \eqref{qdr-bil-system} %, which is given by the partition of
%the system into $n$ linear equations and a single quadratic-bilinear
%equation, 
allows for an efficient MOR computation.
Note that our system~(\ref{qdr-bil-system}) is well-defined for arbitrary
output matrices~$M$, i.e., including indefinite matrices.

\subsubsection{Relationship to tensors}
The system~\eqref{qdr-bil-system} can be related to more standard or classical definitions of quadra\-tic dynamical systems.
A quadratic dynamical system is defined by a three-dim\-en\-sional
tensor~$\mathcal{H} \in \real^{k \times k \times k}$.
In the system~(\ref{qdr-bil-system}),
the matrix~$\tilde{H} = \mathcal{H}^{(1)}$ represents the 1-matriciza\-tion
of this tensor.
Since the matrix~$S$ in~(\ref{symmetricmatrix}) is symmetric,
its rows and columns coincide.
Consequently, the 2-matricization $\mathcal{H}^{(2)}$ and the
3-matricization $\mathcal{H}^{(3)}$ are identical, i.e., 
$$ \mathcal{H}^{(2)} = \mathcal{H}^{(3)} = 
\begin{pmatrix}
  0_{n \times n} & s_1 & 0_{n \times n} & s_2 & \cdots & 0_{n \times n} & s_n & 0_{n \times n} & 0_n \\
  0_{1 \times n} & 0 & 0_{1 \times n} & 0 & \cdots & 0_{1 \times n} & 0 & 0_{1 \times n} & 0 \\
\end{pmatrix} . $$
It follows that the underlying tensor is symmetric,
which allows for advantageous algebraic manipulations.
However, an unsymmetric tensor can always be symmetrized.
The definition of the tensor matricizations and symmetric tensors
can be found in~\cite{benner-breiten}, for example.

\subsubsection{Simplification of the bilinear term}
\label{sec:simpl-bilinear}
The matrices of the bilinear part in~(\ref{qdr-bil-system})
become $\tilde{N}_j = 0$ when $b_j^\top M=0$ for each~$j$.
%$j \in \{ 1,\ldots,n_{\rm in} \}$.
This simplification can happen quite often in practice, in particular when output-relevant state variables have equations that do not include the input.
Let $\mathcal{I}_{b_j} \subset \{ 1,\ldots,n \}$ be the support indices of $b_j$,
and let $\mathcal{I}_M \subset \{ 1,\ldots,n \}$ denote the 
%Let $\mathcal{I}_b,\mathcal{I}_M \subseteq \{ 1,\ldots,n \}$ be index sets.
%$\mathcal{I}_b$ represents the subset of equations in~(\ref{dynamicalsystem}),
%where the input~$u$ appears.
subset of state variables, which are involved in the quadratic output.
It holds that
\begin{equation} \label{N-is-zero}
  \mathcal{I}_{b_j} \cap \mathcal{I}_M = \emptyset
  \qquad \Rightarrow \qquad
  b_j^\top M = 0 .
\end{equation}
%Thus state variables, whose equations do not include the input, do not contribute to the output.
If the premise of~\eqref{N-is-zero} is satisfied for all $j=1,\ldots,n_{\rm in}$
then the bilinear term in~\eqref{qdr-bil-system} vanishes.

\subsubsection{Stabilization}
The linear term $\tilde{A} \tilde{x}$ of the system~(\ref{qdr-bil-system}) is just Lyapunov stable and not asymptotically stable, because the matrix~$\tilde{A}$ has
a zero eigenvalue that is simple. However, asymptotic stability is mandatory in some MOR methods. Hence we stabilize via a tunable parameter $\varepsilon > 0$, resulting in the system 
\begin{align} \label{system-stable}
\begin{split}  
  \dot{\tilde{x}}(t) & = \tilde{A}(\varepsilon) \tilde{x}(t)
  + \tilde{B} u(t)
  + \displaystyle \sum_{j=1}^{n_{\rm in}} u_j(t) \tilde{N}_j \tilde{x}(t)
  + \tilde{H} ( \tilde{x}(t) \otimes \tilde{x}(t) ) \\
  \tilde{y}(t) & = \tilde{c}^\top \tilde{x}(t) 
\end{split} 
\end{align}
with the matrix
\begin{equation} \label{A-eps}
  \tilde{A}(\varepsilon) =
  \begin{pmatrix} A & 0 \\ 0 & -\varepsilon \\ \end{pmatrix}.
\end{equation}
The asymptotic stability of the system~(\ref{system-stable}), and
thus the $\varepsilon > 0$ assumption, is crucial in our balanced truncation method,
which will be demonstrated in Section~\ref{sec:gramians}.

%%%%%%%%%%%%%%%%%%%%%%%%%%%%%%%%%%%%%%%%%%%%%%%%%%%%%%%%%%%%%%%%%%%%%%%%%%%%%
%%%                      Balanced Truncation                              %%%
%%%%%%%%%%%%%%%%%%%%%%%%%%%%%%%%%%%%%%%%%%%%%%%%%%%%%%%%%%%%%%%%%%%%%%%%%%%%%

\section{Balanced Truncation}
\label{sec:truncation}
Our goal is now to compute an MOR of the stabilized quadratic-bilinear
system~\eqref{system-stable}.
To achieve this we apply a method of balanced truncation.
The ultimate accomplishment of balanced truncation is determination of transformation matrices $\tilde{T}_r, \tilde{T}_l \in \real^{(n+1) \times (n+1)}$. Truncations of these matrices to size $\real^{(n+1) \times r}$, with $r \ll n$, are subsequently used to transform the system \eqref{system-stable} of size $n+1$ into an approximating system of size $r$. Since $r \ll n$, this therefore accomplishes MOR. Our ultimate identification of these transformation matrices is given by \eqref{square-transformation} in Section \ref{sec:decompositions}, and an appropriate truncation is prescribed in Section \ref{sec:mor}. An overview of the entire algorithm we propose is demonstrated in Section \ref{ssec:algorithm}.

  The first step in a balanced truncation approach is computation of two Gramian matrices, and our approach to compute these matrices is given in Section \ref{sec:gramians}. However, we first take a short detour in Section \ref{sec:diff-balancing} to illustrate why a related but alternative MOR approach, namely the approach differential balancing, includes substantial challenges and is not attractive for our class of problems.

%%%%%%%%%%%%%%%%%%%%%%%%%%%%%%%%%%%%%%%%%%%%%%%%%%%%%%%%%%%%%%%%%%%%%%%%%%%%%
\subsection{Differential balancing}
\label{sec:diff-balancing}
For comparison against balanced truncation, we examine the concept of differential balancing
from~\cite{kawano-scherpen-2017,scherpen} applied to our class of nonlinear problems.
The purpose of this section is to illustrate that differential balancing presents significant challenges.
In Section \ref{sec:gramians} we will see that such challenges do not arise using our approach of balanced truncation.

\subsubsection{Gramian matrices}
We consider a general nonlinear dynamical system
\begin{align} \label{nonlinear-dynamical}
  \begin{split}
  \dot{x}(t) & = f(t,x(t)) + g(t,x(t)) u(t) \\[1ex]
  y(t) & = h(t,x(t)) . 
  \end{split}
\end{align}
The Gramian matrices of the system~(\ref{nonlinear-dynamical})
may depend on time and/or the state space.
For a matrix-valued function $A(t,x) = (a_{ij}) \in \real^{n \times n}$ and
a vector-valued function $f(t,x) \in \real^n$, we use the notation
$\delta_f (A) = ( \frac{\partial a_{ij}}{\partial t} +
\frac{\partial a_{ij}}{\partial x} f )$.
The reachability Gramian~$P \in \real^{n \times n}$ associated to \eqref{nonlinear-dynamical} satisfies the equations
\begin{align} \label{reach-diff}
  \begin{split}
    - \delta_f (P(t,x)) + \frac{\partial f(t,x)}{\partial x} P(t,x)
    + P(t,x) {\frac{\partial f(t,x)}{\partial x}}^\top 
    + g(t,x) g(t,x)^\top & = 0 \\[1ex]
    - \delta_{g_j} (P(t,x)) + \frac{\partial g_j(t,x)}{\partial x} P(t,x)
    + P(t,x) { \frac{\partial g_j(t,x)}{\partial x} }^\top & = 0 
  \end{split}
\end{align}
for $j=1,\ldots,n_{\rm in}$.
The observability Gramian~$Q$ is the solution of the equations
\begin{equation} \label{obser-diff}
  - \delta_f (Q(t,x)) + \textstyle
  { \frac{\partial f(t,x)}{\partial x} }^\top Q(t,x)
  + Q(t,x) \frac{\partial f(t,x)}{\partial x} +
    { \frac{\partial h(t,x)}{\partial x} }^\top
    \frac{\partial h(t,x)}{\partial x} = 0 .
\end{equation}
The existence and uniqueness of solutions to the above equations is not guaranteed \textit{a priori}.
  
\subsubsection{Linear dynamical system with quadratic output}
Only the output is nonlinear in the dynamical system~(\ref{dynamicalsystem}).
The following definitions of the functions in \eqref{nonlinear-dynamical}
yield the special case \eqref{dynamicalsystem}:
$$ f(t,x) = A x , \qquad g(t,x) = B , \qquad h(t,x) = x^\top M x $$
without an explicit time-dependence.
On the one hand,
let $P$ be the constant matrix solving the linear Lyapunov equation
\begin{equation} \label{lyap-linear-P}
  A P + P A^\top + B B^\top = 0
\end{equation}
of the linear case.
It follows that the matrix~$P$ satisfies all equations~(\ref{reach-diff}).
Hence the differential balancing coincides with the linear concept.
On the other hand, the equations~(\ref{obser-diff}) become
$$  - \delta_f (Q(x)) + A^\top Q(x) + Q(x) A + 4 M x x^\top M = 0 $$ 
assuming a time-invariant solution.
This problem is much more complicated than the linear case,
since the solution still depends on the state space and
partial differential equations emerge.

In~\cite[p.~3302]{kawano-scherpen-2017}, the technique of
generalized differential balancing (gDB) was introduced to achieve
an MOR with a reasonable computational work.
This approach requires an input term~$B(t)u(t)$ and an output
$y(t) = C(t)x(t)$, where the matrices do not depend on the state space.
Hence gDB cannot be directly applied to the dynamical system~(\ref{dynamicalsystem})
due to the nonlinear output.

\subsubsection{Quadratic-bilinear dynamical system}
The function definitions that cast the general system \eqref{nonlinear-dynamical} into the special quad\-ra\-tic-bilinear system \eqref{qdr-bil-system} are:
$$ f(t,\tilde{x}) = \begin{pmatrix} Ax \\ x^\top S x \\ \end{pmatrix} , \qquad
g(t,\tilde{x}) = \begin{pmatrix} B \\ 2 x^\top MB \\ \end{pmatrix} , \qquad
h(t,\tilde{x}) = \tilde{c}^\top \tilde{x} . $$
Again there is no explicit time-dependence.
The Jacobian matrices with respect to the state space read as
$$ \frac{\partial f}{\partial \tilde{x}} =
\begin{pmatrix} A & 0 \\ 2 x^\top S & 0 \\ \end{pmatrix}
\qquad \mbox{and} \qquad
 \frac{\partial g_j}{\partial \tilde{x}} =
\begin{pmatrix} 0 & 0 \\ 2 b_j^\top M & 0 \\ \end{pmatrix} $$
for $j=1,\ldots,n_{\rm in}$.
The first part of the equations~(\ref{reach-diff}) becomes
\begin{equation} \label{reach-diff2}
  \begin{array}{rcl}
  - \delta_f (\tilde{P}(x)) +
\begin{pmatrix} A & 0 \\ 2 x^\top S & 0 \\ \end{pmatrix} \tilde{P}(x)
+ \tilde{P}(x) \begin{pmatrix} A^\top & 2Sx \\ 0 & 0 \\ \end{pmatrix} & & \\[2ex]
+ \begin{pmatrix} BB^\top & 2BB^\top Mx \\
  2 x^\top M BB^\top & 4 x^\top M B B^\top M x \\ \end{pmatrix} & = & 0 \\
  \end{array}
\end{equation}
with a state-dependent solution~$\tilde{P}(x)$.
Let
\begin{equation} \label{ansatz-Px}
  \tilde{P}(x) =
  \begin{pmatrix} P & v(x) \\ v(x)^\top & w(x) \\ \end{pmatrix}
\end{equation}
with $P \in \real^{n \times n}$ satisfying the
Lyapunov equation~(\ref{lyap-linear-P}) and
$v(x) \in \real^n$, $w(x) \in \real$.
The ansatz~(\ref{ansatz-Px}) solves the left upper minor
of the matrix-valued system~(\ref{reach-diff2}).
However, the other minors remain state-dependent.
The other parts of the equations~(\ref{reach-diff}) are simpler,
because the Jacobian matrices $\frac{\partial g_j}{\partial \tilde{x}}$
are constant, but still represent differential equations.
The equations~(\ref{obser-diff}) simplify to
$$ - \delta_f (\tilde{Q}(x)) +
\begin{pmatrix} A^\top & 2Sx \\ 0 & 0 \\ \end{pmatrix} \tilde{Q}(x)
+ \tilde{Q}(x) \begin{pmatrix} A & 0 \\ 2 x^\top S & 0 \\ \end{pmatrix} 
+ \begin{pmatrix} 0 & 0 \\ 0 & 1 \\ \end{pmatrix} = 0 $$
with a state-dependent solution~$\tilde{Q}(x)$.
The lower right entry shows
$(\delta_f (\tilde{Q}))_{\tilde{n},\tilde{n}} = 1$ with $\tilde{n}=n+1$,
which implies that the matrix~$\tilde{Q}$ is not constant. 

In conclusion, the reachability Gramian $P$ as well as observability Gramian
$Q$ require the solution of matrix-valued differential equations,
which is computationally expensive.
Our approach in the following section derives constant (time- and state-independent) Gramian matrices.% in
%the following section.

If it holds that $N_j \neq 0$ for some~$j$,
then the gDB technique cannot be applied to the quadratic-bilinear
system~(\ref{qdr-bil-system}), because the input part does not
exhibit the simple form $B(t) u(t)$.
If it holds that $N_j = 0$ for all~$j$, then gDB is feasible. 
The generalized differential Gramians are non-unique solutions
of matrix inequalities, see~\cite[p.~3302]{kawano-scherpen-2017},
which become constant matrices in this special case.
Yet the matrix inequalities depend on the state variables and
have to be satisfied for all states. 
Thus the complexity is still high in comparison to
linear or quadratic Lyapunov equations with constant coefficients.

%%%%%%%%%%%%%%%%%%%%%%%%%%%%%%%%%%%%%%%%%%%%%%%%%%%%%%%%%%%%%%%%%%%%%%%%%%%%%
\subsection{Gramians of quadratic-bilinear system}
\label{sec:gramians}
The reachability and observability Gramian matrices are defined for
general quadratic-bilinear systems with multiple inputs
and multiple outputs (MIMO) in~\cite{benner-goyal}.
% Section 3 of Benner/Goyal
The theorems on Gramian matrices require a 
quadratic-bilinear system with an asymptotically stable linear part.
Moreover, in \cite{benner-goyal} the existence of the Gramian matrices
is assumed,
i.e., the solvability of quadratic Lyapunov equations alone does not
imply that a solution represents a Gramian,
cf. Theorems 3.1 and 3.2 of \cite{benner-goyal}.
However, the existence of these Gramians can be used to give bounds on observability and controllability energy functionals, similar to the linear case, cf. Theorems 4.1 and 4.2 of \cite{benner-goyal}. Furthermore, truncations of these Gramian matrices also provide observability and controllability estimates for reduced systems. Therefore, computation of these matrices, and subsequent truncations of them, are of fundamental importance for our MOR strategy.

The reachability Gramian $\tilde{P} \in \real^{(n+1) \times (n+1)}$ associated
to the stabilized system~\eqref{system-stable} is the solution of the
quadratic Lyapunov equation
\begin{equation} \label{eq-reachability}
  \tilde{A}(\varepsilon) \tilde{P} + \tilde{P} \tilde{A}(\varepsilon)^\top +
  \tilde{B} \tilde{B}^\top +
  \tilde{H} (\tilde{P} \otimes \tilde{P} ) \tilde{H}^\top +
  \displaystyle \sum_{j=1}^{n_{\rm in}} \tilde{N}_j \tilde{P} \tilde{N}_j^\top 
  = 0 .
\end{equation}
The observability Gramian $\tilde{Q} \in \real^{(n+1) \times (n+1)}$ satisfies
the quadratic Lyapunov equation
\begin{equation} \label{eq-observability}
  \tilde{A}(\varepsilon)^\top \tilde{Q} + \tilde{Q} \tilde{A}(\varepsilon) +
  \tilde{c} \tilde{c}^\top +
  \tilde{H}^{(2)} (\tilde{P} \otimes \tilde{Q} ) (\tilde{H}^{(2)})^\top +
  \displaystyle \sum_{j=1}^{n_{\rm in}} \tilde{N}_j^\top \tilde{Q} \tilde{N}_j 
  = 0 .
\end{equation}
If the reachability Gramian~$\tilde{P}$ is given, then the Lyapunov
equations~(\ref{eq-observability}) represent a linear system for
the unknown entries of the observability Gramian $\tilde{Q}$.
Both Gramian matrices are symmetric and positive semi-definite
since $\varepsilon > 0$.

The following lemma compiles relations that are used to evaluate the
terms in the Lyapunov equations.

%%%%%%%%%%%%%%%%%%%%%%%%%%%%%%%%%%%%%%%%%%%%%%%%%%%%%%%%%%%%%%%%%%%%%%%%%%%%%
\begin{lemma} \label{lemma:formulas}
  Let $\tilde{P},\tilde{Q} \in \real^{(n+1) \times (n+1)}$ be partitioned into
  $$ \tilde{P} = \begin{pmatrix} P & 0 \\ 0 & p' \\ \end{pmatrix}
  \qquad \mbox{and} \qquad
  \tilde{Q} = \begin{pmatrix} Q & 0 \\ 0 & q' \\ \end{pmatrix} $$
  with symmetric matrices $P,Q \in \real^{n \times n}$ and $p',q' \in \real$.
  It follows that
  \begin{itemize}
  \item[i)]
  $\tilde{N}_j \tilde{P} \tilde{N}_j^\top = 4
    \begin{pmatrix} 0 & 0 \\ 0 & b_j^\top M P M b_j \\ \end{pmatrix}$
    \quad for each~$j$,
  \item[ii)]
    $\displaystyle \sum_{j=1}^{n_{\rm in}} \tilde{N}_j^\top \tilde{Q} \tilde{N}_j =
    4 q' \begin{pmatrix} M BB^\top M & 0 \\ 0 & 0 \\ \end{pmatrix}$,
  \item[iii)]
  $\tilde{H} (\tilde{P} \otimes \tilde{P} ) \tilde{H}^\top =
  {\rm tr} ((PS)^2) \; \tilde{c} \, \tilde{c}^\top$, 
  \item[iv)]
  $\tilde{H}^{(2)} (\tilde{P} \otimes \tilde{Q} ) (\tilde{H}^{(2)})^\top =
  q' \begin{pmatrix} S P S & 0 \\ 0 & 0 \\ \end{pmatrix}$. 
  \end{itemize}
\end{lemma}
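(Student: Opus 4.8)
The plan is to split the four identities into two groups. Items (i) and (ii) involve only the bilinear matrices $\tilde{N}_j$ and follow from direct block multiplication, whereas (iii) and (iv) involve Kronecker products with the matricized quadratic tensor and will be handled with a single vec--trace identity.

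First I would dispose of (i) and (ii). Using $\tilde{N}_j=\left(\begin{smallmatrix} 0 & 0 \\ 2b_j^\top M & 0\end{smallmatrix}\right)$ and the block-diagonal form of $\tilde{P}$, one computes $\tilde{N}_j\tilde{P}=\left(\begin{smallmatrix} 0 & 0 \\ 2b_j^\top MP & 0\end{smallmatrix}\right)$; multiplying on the right by $\tilde{N}_j^\top=\left(\begin{smallmatrix} 0 & 2Mb_j \\ 0 & 0\end{smallmatrix}\right)$ (here the symmetry of $M$ is used to transpose $2b_j^\top M$) isolates the scalar $4\,b_j^\top MPMb_j$ in the lower-right corner, which is (i). For (ii) the analogous product $\tilde{N}_j^\top\tilde{Q}\tilde{N}_j$ places the rank-one block $4q'\,Mb_jb_j^\top M$ in the upper-left corner; summing over $j$ and using $\sum_j b_jb_j^\top=BB^\top$ gives (ii). These steps are routine.

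The substance lies in (iii) and (iv). For (iii), the key observation from \eqref{structure-H} is that $\tilde{H}$ has only its last row occupied, so $\tilde{H}=\tilde{c}\,h^\top$ with $\tilde{c}=(0,\dots,0,1)^\top$ and $h\in\real^{(n+1)^2}$ the transpose of that last row. Reading off the entries of $h$ against the Kronecker indexing $(\tilde{x}\otimes\tilde{x})_{(k-1)(n+1)+l}=\tilde{x}_k\tilde{x}_l$ identifies $h=\mathrm{vec}(\mathcal{S})$ for the symmetric matrix $\mathcal{S}=\left(\begin{smallmatrix} S & 0 \\ 0 & 0\end{smallmatrix}\right)$, with $S$ from \eqref{symmetricmatrix}. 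Then $\tilde{H}(\tilde{P}\otimes\tilde{P})\tilde{H}^\top=\bigl(h^\top(\tilde{P}\otimes\tilde{P})h\bigr)\tilde{c}\tilde{c}^\top$, and I would evaluate the scalar with the identity $\mathrm{vec}(Y)^\top(A\otimes B)\mathrm{vec}(X)=\mathrm{tr}(Y^\top BXA^\top)$. Taking $A=B=\tilde{P}$ and $X=Y=\mathcal{S}$ yields $\mathrm{tr}((\mathcal{S}\tilde{P})^2)$, and the block-diagonal structure collapses this to $\mathrm{tr}((SP)^2)=\mathrm{tr}((PS)^2)$, which is (iii).

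For (iv) I would run the same machinery on the $2$-matricization. Now $\tilde{H}^{(2)}$ has its last row zero and its first $n$ rows occupied; writing its $i$-th row ($i\le n$) as $\mathrm{vec}(G_i)$ one finds $G_i=\tilde{c}\,\hat{s}_i^\top$ with $\hat{s}_i=(s_i^\top,0)^\top$ and $s_i$ the $i$-th column of $S$. Applying the same vec--trace identity entrywise, together with $\tilde{c}^\top\tilde{Q}\tilde{c}=q'$ and the block form of $\tilde{P}$, reduces the $(i,j)$ entry to $q'\,s_j^\top Ps_i=q'(SPS)_{ji}$; the symmetry of $SPS$ then assembles these into the claimed block $q'\left(\begin{smallmatrix} SPS & 0 \\ 0 & 0\end{smallmatrix}\right)$. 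The main obstacle throughout is bookkeeping: one must fix a convention for the Kronecker/vec ordering and for the tensor matricizations in \eqref{structure-H} that is consistent with the paper, and verify that $\tilde{H}$ and $\tilde{H}^{(2)}$ encode $\mathcal{S}$ in the asserted way. Once the correct vec--trace identity is invoked with this convention, the symmetry of $M$, $S$, $\tilde{P}$ and $\tilde{Q}$ renders the remaining reductions mechanical.
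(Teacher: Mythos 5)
Your proof is correct. Parts (i) and (ii) coincide with the paper's direct block computations. For (iii) and (iv) you take a somewhat different, more uniform route: you encode the occupied row(s) of $\tilde{H}$ and $\tilde{H}^{(2)}$ as vectorizations of the padded matrix $\mathcal{S}=\left(\begin{smallmatrix} S & 0 \\ 0 & 0 \end{smallmatrix}\right)$ and evaluate everything with the single identity $\mathrm{vec}(Y)^\top (A\otimes B)\,\mathrm{vec}(X)=\mathrm{tr}(Y^\top B X A^\top)$, which indeed yields $\mathrm{tr}((\mathcal{S}\tilde{P})^2)=\mathrm{tr}((PS)^2)$ for (iii) and the entrywise reduction $q'\,s_j^\top P s_i=q'(SPS)_{ij}$ for (iv); I checked that your identification $h=\mathrm{vec}(\mathcal{S})$ and the row description of $\tilde{H}^{(2)}$ are consistent with the column-major Kronecker indexing used in \eqref{structure-H} and in the displayed $\mathcal{H}^{(2)}$, so the bookkeeping you flag does work out. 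The paper instead proves (iii) by an explicit index summation that identifies the scalar with $\sum_{i,k} t_{ik}t_{ki}$ for $t_{ij}$ the entries of $PS$, and proves (iv) more compactly by observing the factorization $\tilde{H}^{(2)}=\tilde{S}\otimes\tilde{c}^\top$ and applying the Kronecker mixed-product rule, giving $(\tilde{S}\tilde{P}\tilde{S})\otimes(\tilde{c}^\top\tilde{Q}\tilde{c})$ in one line. Your vec--trace argument buys a single reusable mechanism for both items (and would generalize to non-block-diagonal $\tilde{P},\tilde{Q}$), while the paper's treatment of (iv) is shorter because the mixed-product rule avoids any entrywise computation; both are fully rigorous.
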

%%%%%%%%%%%%%%%%%%%%%%%%%%%%%%%%%%%%%%%%%%%%%%%%%%%%%%%%%%%%%%%%%%%%%%%%%%%%%

\begin{proof}
%\underline{Proof:} \nopagebreak
\leavevmode\newline
i) We calculate directly
$$ \tilde{N}_j \tilde{P} \tilde{N}_j^\top =
\begin{pmatrix} 0 & 0 \\ 2 b_j^\top M & 0 \\ \end{pmatrix}
\begin{pmatrix} P & 0 \\ 0 & p' \\ \end{pmatrix}
\begin{pmatrix} 0 & 2 M b_j \\ 0 & 0 \\ \end{pmatrix} =
\begin{pmatrix} 0 & 0 \\ 0 & 4 b_j^\top M P M b_j \\ \end{pmatrix} . $$
ii) Likewise, it holds that
$$ \tilde{N}_j^\top \tilde{Q} \tilde{N}_j =
\begin{pmatrix} 0 & 2Mb_j \\ 0 & 0 \\ \end{pmatrix}
\begin{pmatrix} Q & 0 \\ 0 & q' \\ \end{pmatrix}
\begin{pmatrix} 0 & 0 \\ 2 b^\top M & 0 \\ \end{pmatrix} =
\begin{pmatrix} 4q' (Mb_j)(b_j^\top M) & 0 \\ 0 & 0 \\ \end{pmatrix} $$
for~$j=1,\ldots,n_{\rm in}$.
The sum over~$j$ yields the formula.

iii) The structure~\eqref{structure-H} of $\tilde{H}$ implies that the matrix
$\tilde{H} (\tilde{P} \otimes \tilde{P} ) \tilde{H}^\top$
has only one non-zero entry in the position $(n+1,n+1)$.
The vector $\tilde{c}$ is the $(n+1)$th unit vector.
Hence we obtain
$\tilde{H} (\tilde{P} \otimes \tilde{P} ) \tilde{H}^\top =
\gamma \tilde{c} \, \tilde{c}^\top$
with a scalar $\gamma$ to be determined.

Let $t_{ij}$ for $i,j=1,\ldots,n$ be the entries of the
non-symmetric matrix $PS$.
It holds that
$$ \gamma = \sum_{i,j=1}^n p_{ji} s_j^\top P s_i =
\sum_{i,j,k,\ell = 1}^n p_{ij} s_{jk} p_{k\ell} s_{i\ell} =
\sum_{i,k=1}^n t_{ik} t_{ki} . $$
Furthermore, we obtain the entries
$$ ((PS)^2)_{ij} = \sum_{k=1}^n t_{ik} t_{kj}
\quad \mbox{and thus} \quad
((PS)^2)_{ii} = \sum_{k=1}^n t_{ik} t_{ki} $$
for $i,j=1,\ldots,n$.
The sum over~$i$ yields the trace.

iv) We define the symmetric matrix $\tilde{S} \in \real^{(n+1)\times(n+1)}$ by
$$ \tilde{S} = \begin{pmatrix} S & 0 \\ 0 & 0 \\ \end{pmatrix} . $$
It holds that
$\tilde{H}^{(2)} = \tilde{S} \otimes \tilde{c}^\top \in \real^{(n+1) \times (n+1)^2}$,
since $\tilde{c}$ is the $(n+1)$th unit vector.
The rule for matrix multiplications with the Kronecker product yields
$$ \tilde{H}^{(2)} ( \tilde{P} \otimes \tilde{Q} ) (\tilde{H}^{(2)})^\top =
(\tilde{S} \otimes \tilde{c}^\top) ( \tilde{P} \otimes \tilde{Q} )
(\tilde{S} \otimes \tilde{c}) =
(\tilde{S} \tilde{P} \tilde{S}) \otimes
(\tilde{c}^\top \tilde{Q} \tilde{c}) =
q' \tilde{S} \tilde{P} \tilde{S} . $$
The definition of the matrix $\tilde{S}$ implies
$$ \tilde{S} \tilde{P} \tilde{S} = 
\begin{pmatrix} SPS & 0 \\ 0 & 0 \\ \end{pmatrix} , $$
which shows the statement.
%\hfill $\Box$
\end{proof}

\medskip

The left-hand sides of (i)-(iv) are terms in the Lyapunov equations \eqref{eq-reachability} and \eqref{eq-observability}.  Direct evaluation of terms in the Lyapunov equations can be expensive, but Lemma~\ref{lemma:formulas} indicates that this effort can be significantly reduced because of the special structure of the system \eqref{system-stable} under consideration. 
%The computational effort for
%the evaluation of nonlinear terms in the Lyapunov equations
%(\ref{eq-reachability}),(\ref{eq-observability}).
The terms~(i) and~(ii) together require mainly two matrix-vector
multiplications with the columns of~$B$.
The term~(iv) is obtained by two matrix-matrix multiplications
for $SPS$.
One additional matrix-matrix multiplication yields $(PS)^2$ and
thus the term~(iii).

The next result characterizes the solutions of the quadratic
Lyapunov equations.
%%%%%%%%%%%%%%%%%%%%%%%%%%%%%%%%%%%%%%%%%%%%%%%%%%%%%%%%%%%%%%%%%%%%%%%%%%%%%
\begin{theorem}\label{thm:gramian-computation}
  A reachability Gramian~$\tilde{P}$
  satisfying~(\ref{eq-reachability}) 
  is given by
  \begin{equation} \label{ansatzP}
  \tilde{P} =
  \begin{pmatrix} P & 0 \\ 0 & p' \end{pmatrix} ,
  \end{equation}
  where $P$ solves the linear Lyapunov equation~(\ref{lyap-linear-P}),
  i.e., $AP + PA^\top + BB^\top = 0$, and
  %\begin{equation} \label{lyap-linear-P}
  %  A P + P A^\top + B B^\top = 0
  %\end{equation}
  \begin{equation} \label{pprime}
    p' = \textstyle{\frac{1}{2\varepsilon}} p''
    \quad \mbox{with} \quad
    p'' = 
    %\tilde{c}^\top ( \tilde{H} (\tilde{P} \otimes \tilde{P} ) \tilde{H}^\top )
    %\tilde{c}
    {\rm tr}((PS)^2)
    + 4 \displaystyle \sum_{j=1}^{n_{\rm in}} b_j^\top M P M b_j
    \ge 0 .
  \end{equation}
  An observability Gramian~$\tilde{Q}$
  satisfying~(\ref{eq-observability})
  is given by
  \begin{equation} \label{ansatzQ}
  \tilde{Q} = \frac{1}{2\varepsilon}
  \begin{pmatrix} Q & 0 \\ 0 & 1 \end{pmatrix},
  \end{equation}
  where $Q$ satisfies the linear Lyapunov equations
  \begin{equation} \label{lyap-linear-Q}
    A^\top Q + Q A + SPS + 4 M B B^\top M = 0 .
  \end{equation}
\end{theorem}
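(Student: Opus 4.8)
The plan is to verify both assertions by substituting the proposed block-diagonal forms \eqref{ansatzP} and \eqref{ansatzQ} into the quadratic Lyapunov equations \eqref{eq-reachability} and \eqref{eq-observability} and reading off the resulting equations block by block. The essential observation is that each ansatz has precisely the block-diagonal structure hypothesized in Lemma~\ref{lemma:formulas}; hence that lemma applies verbatim, and the four otherwise opaque nonlinear Kronecker-product terms each collapse to a single scalar-weighted block. All linear terms are then obtained by elementary block multiplication using $\tilde{A}(\varepsilon)$ from \eqref{A-eps}, $\tilde{B}=(B^\top,0)^\top$, and $\tilde{c}=(0,\dots,0,1)^\top$.

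For the reachability equation I would substitute $\tilde{P}=\mathrm{diag}(P,p')$, obtaining $\tilde{A}(\varepsilon)\tilde{P}+\tilde{P}\tilde{A}(\varepsilon)^\top=\mathrm{diag}(AP+PA^\top,\,-2\varepsilon p')$ and $\tilde{B}\tilde{B}^\top=\mathrm{diag}(BB^\top,0)$, while Lemma~\ref{lemma:formulas}(iii) and (i) show that $\tilde{H}(\tilde{P}\otimes\tilde{P})\tilde{H}^\top$ and $\sum_j \tilde{N}_j\tilde{P}\tilde{N}_j^\top$ are nonzero only in the $(n+1,n+1)$ entry. Consequently the off-diagonal blocks vanish automatically, confirming that the block-diagonal ansatz is self-consistent. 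The upper-left $n\times n$ block is $AP+PA^\top+BB^\top=0$, which is exactly \eqref{lyap-linear-P} and, crucially, is independent of $p'$; since $A$ is asymptotically stable this has a unique symmetric positive semi-definite solution $P$. The scalar lower-right block then reads $-2\varepsilon p'+\mathrm{tr}((PS)^2)+4\sum_j b_j^\top MPMb_j=0$, which I solve for $p'=\tfrac{1}{2\varepsilon}p''$ as in \eqref{pprime}.

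For the observability equation I would substitute $\tilde{Q}=\tfrac{1}{2\varepsilon}\,\mathrm{diag}(Q,1)$, i.e. apply Lemma~\ref{lemma:formulas} with its lower-right scalar taken as $q'=\tfrac{1}{2\varepsilon}$ and its upper-left block as $\tfrac{1}{2\varepsilon}Q$. Here it matters that, given $\tilde{P}$, equation \eqref{eq-observability} is linear in $\tilde{Q}$, so this is a genuine linear problem. The linear terms give $\tilde{A}(\varepsilon)^\top\tilde{Q}+\tilde{Q}\tilde{A}(\varepsilon)=\tfrac{1}{2\varepsilon}\,\mathrm{diag}(A^\top Q+QA,\,-2\varepsilon)$ and $\tilde{c}\tilde{c}^\top=\mathrm{diag}(0,1)$, while Lemma~\ref{lemma:formulas}(iv) and (ii) contribute $q'\,\mathrm{diag}(SPS,0)$ and $4q'\,\mathrm{diag}(MBB^\top M,0)$. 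Again the off-diagonal blocks vanish. The lower-right block is $-1+1=0$, which confirms the choice $q'=\tfrac{1}{2\varepsilon}$, and after clearing the common factor $\tfrac{1}{2\varepsilon}$ the upper-left block becomes $A^\top Q+QA+SPS+4MBB^\top M=0$, i.e. \eqref{lyap-linear-Q}.

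The only step that is not purely mechanical is the nonnegativity $p''\ge 0$ asserted in \eqref{pprime}. The sum $4\sum_j b_j^\top MPMb_j=4\sum_j (Mb_j)^\top P(Mb_j)\ge 0$ because $P$ is positive semi-definite as the solution of a stable Lyapunov equation. For the trace term I would invoke the symmetric positive semi-definite square root $P^{1/2}$ and the cyclic invariance of the trace to write $\mathrm{tr}((PS)^2)=\mathrm{tr}\big((P^{1/2}SP^{1/2})^2\big)$; since $P^{1/2}SP^{1/2}$ is symmetric, its square has nonnegative trace (the sum of its squared eigenvalues), whence $\mathrm{tr}((PS)^2)\ge 0$ and therefore $p''\ge 0$. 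I expect this trace inequality to be the main conceptual, as opposed to computational, obstacle, the remainder being routine block bookkeeping enabled by Lemma~\ref{lemma:formulas}.
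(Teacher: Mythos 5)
Your proposal is correct and follows essentially the same route as the paper: substitute the block-diagonal ansatz, invoke Lemma~\ref{lemma:formulas} for the four nonlinear terms, read off the $(1,1)$-block and the $(n+1,n+1)$-entry to obtain \eqref{lyap-linear-P}, \eqref{pprime} and \eqref{lyap-linear-Q}, and prove $p''\ge 0$ via the square root $P^{1/2}$ and cyclic invariance of the trace. Your variant for the trace term, $\mathrm{tr}((PS)^2)=\mathrm{tr}\bigl((P^{1/2}SP^{1/2})^2\bigr)\ge 0$ as a sum of squared eigenvalues of a symmetric matrix, is equivalent to the paper's rewriting $\mathrm{tr}\bigl((SP^{1/2})^\top P(SP^{1/2})\bigr)$ and equally valid.
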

%%%%%%%%%%%%%%%%%%%%%%%%%%%%%%%%%%%%%%%%%%%%%%%%%%%%%%%%%%%%%%%%%%%%%%%%%%%%%

\begin{proof}
\leavevmode\newline
%\underline{Proof:}
Inserting the ansatz~(\ref{ansatzP}) into the quadratic
Lyapunov equations~(\ref{eq-reachability}) yields the linear
Lyapunov equations~(\ref{lyap-linear-P}) for the first part.
Due to Lemma~\ref{lemma:formulas} (i) and (iii),
the second part becomes
$$ - 2 \varepsilon p' + {\rm tr} ((PS)^2) +
4 \displaystyle \sum_{j=1}^{n_{\rm in}} b_j^\top M P M b_j = 0 , $$
which uniquely defines the scalars~$p'$ and~$p''$, respectively.

Now the ansatz~(\ref{ansatzQ}) is inserted into the
Lyapunov equations~(\ref{eq-observability}).
The second part is fulfilled immediately
due to $-2 \varepsilon \cdot \frac{1}{2\varepsilon} + 1 = 0$. 
Lemma~\ref{lemma:formulas} (ii) and (iv) yield the
linear Lyapunov equations
$$ \textstyle A^\top \frac{1}{2\varepsilon} Q +
A\frac{1}{2\varepsilon} Q  + \frac{1}{2\varepsilon}  SPS +
4 \frac{1}{2\varepsilon} M B B^\top M = 0 $$
as the first part.
A multiplication by the factor $2\varepsilon$ results in the
Lyapunov equations~(\ref{lyap-linear-Q}).

Finally, we show the lower bound $p'' \ge 0$ in~(\ref{pprime}).
The solution~$P$ of the Lyapunov equation~(\ref{lyap-linear-P})
is always symmetric and positive semi-definite.
The matrix~$S$ is just symmetric.
It follows that $b_j^\top M P M b_j \ge 0$ for all $j=1,\ldots,n_{\rm in}$.
Using the matrix square root $P = P^{\frac{1}{2}} P^{\frac{1}{2}}$,
we obtain
$$ {\rm tr}((PS)^2) = {\rm tr}( P^{\frac{1}{2}} P^{\frac{1}{2}} SPS ) =
{\rm tr} ( P^{\frac{1}{2}} SPS P^{\frac{1}{2}} ) =
{\rm tr} ( (SP^{\frac{1}{2}})^\top P (S P^{\frac{1}{2}}) ) $$
due to a property of the trace.
Obviously, the matrix $(SP^{\frac{1}{2}})^\top P (S P^{\frac{1}{2}})$
is symmetric and positive semi-definite again.
Hence its trace is non-negative.
%\hfill $\Box$
\end{proof}

\medskip

We have shown the existence of symmetric positive semi-definite solutions
satisfying the quadratic Lyapunov equations~(\ref{eq-reachability})
and (\ref{eq-observability}).
The proof of Theorem~\ref{thm:gramian-computation} demonstrates that
the matrices~(\ref{ansatzP}) and~(\ref{ansatzQ}) are the unique
solutions of the Lyapunov equations in the set of
block-diagonal matrices of the used form.
We just assume that the solutions are also unique in the set of
all matrices.
All these property have to be assumed in the case of general
quadratic-bilinear systems, cf.~\cite[p.~13]{benner-goyal-gugercin}.

Theorem \ref{thm:gramian-computation} reveals the explicit dependence of the reachability and observability Gramians $\tilde{P}$ and $\tilde{Q}$, respectively, on the stabilization parameter $\varepsilon$. In particular, the matrices $P,Q$ satisfying the linear Lyapunov equations
(\ref{lyap-linear-P}),(\ref{lyap-linear-Q}) are independent of
the stabilization parameter~$\varepsilon$. The observability Gramian~(\ref{ansatzQ}) is directly proportional to $\frac{1}{\varepsilon}$.
This $1/\varepsilon$ dependence suggests that the Lyapunov equations cannot
be solved if we set $\varepsilon = 0$.
We codify this fact below.
  \begin{corollary}
    If $\varepsilon = 0$ and $P^{\frac{1}{2}} S P^{\frac{1}{2}} \neq 0$,
    then the Lyapunov equation \eqref{eq-reachability}
    does not have a solution.
    If $\varepsilon = 0$, then the Lyapunov equation \eqref{eq-observability}
    does not have a solution.
  \end{corollary}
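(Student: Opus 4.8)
The plan is to derive both non-existence claims from a single scalar obstruction, namely the $(n+1,n+1)$ entry of the corresponding matrix Lyapunov equation evaluated at $\varepsilon=0$. The structural fact that makes this work is that the matrix \eqref{A-eps} at $\varepsilon=0$ has an identically zero last row and last column. Consequently, for \emph{any} symmetric $\tilde{P}$ (respectively $\tilde{Q}$), the linear contribution $\tilde{A}(0)\tilde{P}+\tilde{P}\tilde{A}(0)^\top$ (respectively $\tilde{A}(0)^\top\tilde{Q}+\tilde{Q}\tilde{A}(0)$) has a vanishing $(n+1,n+1)$ entry. Thus at $\varepsilon=0$ that entry is governed entirely by the constant source term and the quadratic/bilinear terms, which I can evaluate using (generalizations of) Lemma~\ref{lemma:formulas}.

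I would treat the observability equation \eqref{eq-observability} first, since it is unconditional. The source $\tilde{c}\tilde{c}^\top$ contributes exactly $1$ to the $(n+1,n+1)$ entry. It then suffices to check that the two remaining terms contribute nothing there: reusing the identity $\tilde{H}^{(2)}=\tilde{S}\otimes\tilde{c}^\top$ from the proof of Lemma~\ref{lemma:formulas}(iv) gives $\tilde{H}^{(2)}(\tilde{P}\otimes\tilde{Q})(\tilde{H}^{(2)})^\top=\tilde{Q}_{n+1,n+1}\,\tilde{S}\tilde{P}\tilde{S}$ for \emph{arbitrary} $\tilde{P},\tilde{Q}$, and $\tilde{S}\tilde{P}\tilde{S}$ has zero $(n+1,n+1)$ entry; likewise a direct block product shows each $\tilde{N}_j^\top\tilde{Q}\tilde{N}_j$ is supported in the top-left $n\times n$ block. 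Hence the $(n+1,n+1)$ entry of \eqref{eq-observability} collapses to $1=0$, which is impossible for any candidate $\tilde{Q}$ (and for any $\tilde{P}$), proving the second claim.

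For the reachability equation \eqref{eq-reachability} the argument is two-pronged. First, I would read off the top-left $n\times n$ block: the terms $\tilde{H}(\tilde{P}\otimes\tilde{P})\tilde{H}^\top$ and $\sum_j\tilde{N}_j\tilde{P}\tilde{N}_j^\top$ are supported only at position $(n+1,n+1)$ and hence do not enter this block, so the block equation reduces to $AP+PA^\top+BB^\top=0$ for $P:=\tilde{P}_{[1:n,1:n]}$. Since $A$ is Hurwitz, this forces $P$ to be the unique symmetric positive semi-definite solution of \eqref{lyap-linear-P}. Second, the $(n+1,n+1)$ entry at $\varepsilon=0$ equates the quadratic/bilinear contribution to zero, i.e.\ $\mathrm{tr}((PS)^2)+4\sum_j b_j^\top M P M b_j=0$. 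I would then invoke the trace identity established in the proof of Theorem~\ref{thm:gramian-computation}, which rewrites $\mathrm{tr}((PS)^2)=\|P^{1/2}SP^{1/2}\|_F^2$, together with $b_j^\top M P M b_j=(Mb_j)^\top P(Mb_j)\ge 0$. Under the hypothesis $P^{1/2}SP^{1/2}\neq 0$ the first term is strictly positive while the sum is nonnegative, so the required vanishing is violated---a contradiction.

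The step demanding the most care---and the main obstacle---is the localization underlying both arguments: Lemma~\ref{lemma:formulas} is stated only for block-diagonal $\tilde{P},\tilde{Q}$, whereas I must rule out \emph{every} candidate solution, which need not be block-diagonal a priori. The remedy is to recompute the four structured terms for general symmetric $\tilde{P}$ and $\tilde{Q}$ and verify that their off-diagonal and bottom-right data never reach the single entry (or single block) that drives the contradiction: the $\tilde{H}$-term and each $\tilde{N}_j\tilde{P}\tilde{N}_j^\top$ depend only on the top-left block $P$ and live at $(n+1,n+1)$, while the $\tilde{H}^{(2)}$-term and each $\tilde{N}_j^\top\tilde{Q}\tilde{N}_j$ depend only on $\tilde{Q}_{n+1,n+1}$ and live in the top-left block. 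Once these localizations are confirmed, both contradictions are immediate.
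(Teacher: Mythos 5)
Your proof is correct and follows essentially the same route as the paper's: at $\varepsilon=0$ both contradictions are read off from the $(n+1,n+1)$ entry of the respective quadratic Lyapunov equation, the reachability case reducing to the strict positivity of ${\rm tr}((PS)^2)+4\sum_j b_j^\top M P M b_j$ under the hypothesis $P^{1/2}SP^{1/2}\neq 0$, and the observability case to the impossible identity $1=0$. Your explicit verification that the localization of Lemma~\ref{lemma:formulas} persists for arbitrary, not necessarily block-diagonal, candidate solutions (and that the top-left block of any candidate $\tilde{P}$ must coincide with the unique solution of \eqref{lyap-linear-P}) is a sharpening of a step the paper leaves implicit, but it does not change the underlying argument.
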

  \begin{proof}
    \leavevmode\newline
    Let $\varepsilon = 0$ in the matrix~(\ref{A-eps}).
    We investigate the component $(n+1,n+1)$ in each quadratic Lyapunov equation.
    In the Lyapunov equation~(\ref{eq-reachability}),
    this component yields $p'' = 0$ with $p''$ defined in~(\ref{pprime}). 
    We showed that $p'' \ge 0$ in the proof of
    Theorem~\ref{thm:gramian-computation}. 
    A necessary condition for $p''=0$ is $P^{\frac{1}{2}} SPS P^{\frac{1}{2}}=0$,
    which is excluded due to $(P^{\frac{1}{2}} S P^{\frac{1}{2}})^2 \neq 0$.
    In the Lyapunov equation~(\ref{eq-observability}),
    the left-hand side of this component becomes~$1$ due to
    Lemma~\ref{lemma:formulas} (ii) and (iv).
    This contradicts the fact that this component on the right-hand side of \eqref{eq-observability} must be 0.
  \end{proof}

%%%%%%%%%%%%%%%%%%%%%%%%%%%%%%%%%%%%%%%%%%%%%%%%%%%%%%%%%%%%%%%%%%%%%%%%%%%%%
%\begin{theorem}\label{thm:epsilon}
%  If it holds that $\varepsilon = 0$ in the matrix~(\ref{A-eps}),
%  then the Lyapunov equation~(\ref{eq-reachability}) has no solution
%  for ${\rm tr}((PS)^2) \neq 0$ and the Lyapunov
%  equation~(\ref{eq-observability}) never has no solution.
%\end{theorem}
%%%%%%%%%%%%%%%%%%%%%%%%%%%%%%%%%%%%%%%%%%%%%%%%%%%%%%%%%%%%%%%%%%%%%%%%%%%%%

%%%%%%%%%%%%%%%%%%%%%%%%%%%%%%%%%%%%%%%%%%%%%%%%%%%%%%%%%%%%%%%%%%%%%%%%%%%%%
\subsection{Balancing the system}
\label{sec:decompositions}
In order to perform balanced truncation, we require symmetric decompositions of the two Gramian matrices.
Let $P = L_P L_P^\top$ and $Q = L_Q L_Q^\top$ be the Cholesky decompositions
of the solutions of the linear Lyapunov equations
(\ref{lyap-linear-P}) and (\ref{lyap-linear-Q}), respectively.
From Theorem~\ref{thm:gramian-computation},
we obtain factorizations of the reachability Gramian
$\tilde{P} = \tilde{L}_P \tilde{L}_P^\top$
and the observability Gramian
$\tilde{Q} = \tilde{L}_Q \tilde{L}_Q^\top$ by
$$ \tilde{L}_P =
\begin{pmatrix} L_P & 0 \\ 0 & \sqrt{\frac{p''}{2\varepsilon}} \\
\end{pmatrix}
\qquad \mbox{and} \qquad
\tilde{L}_Q = \frac{1}{\sqrt{2\varepsilon}}
\begin{pmatrix} L_Q & 0 \\ 0 & 1 \\
\end{pmatrix} . $$
We remark that one need not actually form the Gramian matrices
in order to compute low-rank Cholesky factors \cite{Kue16}.
In the method of balanced truncation, we require the
singular value decomposition (SVD)
\begin{equation} \label{svd}
  \tilde{L}_Q^\top \tilde{L}_P =
  \tilde{U} \tilde{\Sigma} \tilde{V}^\top .
\end{equation}
In our case, the matrix on the left-hand side reads as
$$ \tilde{L}_Q^\top \tilde{L}_P =  \frac{1}{\sqrt{2\varepsilon}}
\begin{pmatrix} L_Q^\top L_P & 0 \\ 0 & \sqrt{\frac{p''}{2\varepsilon}} \\
\end{pmatrix} . $$
We use the SVD
\begin{equation} \label{svd-small}
  L_Q^\top L_P = U \Sigma V^\top ,
\end{equation}
where the diagonal matrix $\Sigma = {\rm diag}(\sigma_1,\ldots,\sigma_n)$
includes the singular values in \textit{ascending} order
$\sigma_1 \le \sigma_2 \le \cdots \le \sigma_n$.
The SVD~(\ref{svd-small}) is independent of the
stabilization parameter~$\varepsilon$.
Thus the SVD component matrices in (\ref{svd}) are given by
\begin{equation}\label{eq:SVD-tilde}
\tilde{U} = \begin{pmatrix} U & 0 \\ 0 & 1 \\ \end{pmatrix} , \qquad
\tilde{V}^\top = \begin{pmatrix} V^\top & 0 \\ 0 & 1 \\ \end{pmatrix} , \qquad
\tilde{\Sigma} = \frac{1}{\sqrt{2\varepsilon}}
\begin{pmatrix} \Sigma & 0 \\ 0 & \sqrt{\frac{p''}{2\varepsilon}} \\
\end{pmatrix} .
\end{equation}
If the stabilization parameter $\varepsilon$ is sufficiently small,
then the maximum singular value is
$\sqrt{p''} / (2\varepsilon)$.
However, the associated singular vector is independent of
$p''$ and $\varepsilon$.
The singular values of the quadratic-bilinear system~(\ref{qdr-bil-system})
are
\begin{equation} \label{sv-order}
  \textstyle \frac{1}{\sqrt{2\varepsilon}}
  \left( \sigma_1 , \sigma_2 , \ldots , \sigma_n ,
  \sqrt{\frac{p''}{2\varepsilon}} \right) .
\end{equation}
These real numbers represent the analogue of the Hankel singular values
in the case of linear dynamical systems.
The two transformation matrices, which achieve a balanced system,
result to
\begin{equation} \label{square-transformation}
\begin{array}{l}
\tilde{T}_{\rm l} =
\tilde{L}_Q \tilde{U} \tilde{\Sigma}^{-\frac{1}{2}} =
\frac{1}{\sqrt[4]{2\varepsilon}}
\begin{pmatrix}
  L_Q U \Sigma^{-\frac{1}{2}} & 0 \\
  0 & \sqrt[4]{\frac{2\varepsilon}{p''}} \\
\end{pmatrix} =
\begin{pmatrix}
  \frac{1}{\sqrt[4]{2\varepsilon}} L_Q U \Sigma^{-\frac{1}{2}} & 0 \\
  0 & {p''}^{-\frac{1}{4}} \\
\end{pmatrix} , \\[4ex]
\tilde{T}_{\rm r} =
\tilde{L}_P \tilde{V} \tilde{\Sigma}^{-\frac{1}{2}} =
\sqrt[4]{2\varepsilon}
\begin{pmatrix}
  L_P V \Sigma^{-\frac{1}{2}} & 0 \\
  0 & \sqrt[4]{\frac{p''}{2\varepsilon}} \\
\end{pmatrix} =
\begin{pmatrix}
  \textstyle{\sqrt[4]{2\varepsilon}} L_P V \Sigma^{-\frac{1}{2}} & 0 \\
  0 & {p''}^{\frac{1}{4}} \\
\end{pmatrix}
\end{array}
\end{equation}
with $p''$ from~(\ref{pprime}).
It holds that
$\tilde{T}_{\rm l}^\top \tilde{T}_{\rm r} = I$
with the identity matrix~$I$.
For $\varepsilon \rightarrow 0$ some parts of the matrices converge
to zero and the other parts tend to infinity, hence the limits do not exist.
However, the balanced system can be written in a form, which allows
for further interpretations.

%%%%%%%%%%%%%%%%%%%%%%%%%%%%%%%%%%%%%%%%%%%%%%%%%%%%%%%%%%%%%%%%%%%%%%%%%%%%%
\begin{lemma}\label{lemma:balanced-system}
  The balanced system of dimension $n+1$ reads as
  \begin{align} \label{system-balanced}
  \begin{split}  
    \dot{\bar{x}}(t) & = \bar{A}(\varepsilon) \bar{x}(t)
    + \bar{B}(\varepsilon) u(t)
    + \displaystyle \sum_{j=1}^{n_{\rm in}} u_j(t) \bar{N}_j(\varepsilon) \bar{x}(t)
    + \bar{H}(\varepsilon) ( \bar{x}(t) \otimes \bar{x}(t) ) \\
    \bar{y}(t) & = \bar{c}^\top \bar{x}(t) 
  \end{split} 
  \end{align}
  with
  $$ \bar{A}(\varepsilon) =
  \begin{pmatrix} \bar{A}' & 0 \\ 0 & - \varepsilon \\ \end{pmatrix}, \qquad
  \bar{B}(\varepsilon) =  \frac{1}{\sqrt[4]{2\varepsilon}}
  \begin{pmatrix} \bar{B}' \\ 0 \\ \end{pmatrix}, \;\; $$
  $$ \bar{N}_j(\varepsilon) = \sqrt[4]{2\varepsilon}
  \begin{pmatrix} 0 & 0 \\ \bar{N}_j' & 0 \\ \end{pmatrix} , \qquad
  \bar{H}(\varepsilon) = \sqrt{2\varepsilon} \bar{H}' $$
  and $\bar{A}',\bar{B}',\bar{N}_j',\bar{H}',\bar{c}$
  independent of~$\varepsilon$.
\end{lemma}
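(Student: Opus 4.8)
The plan is to apply the standard Petrov--Galerkin transformation associated with the balancing matrices $\tilde{T}_{\rm l},\tilde{T}_{\rm r}$ from~\eqref{square-transformation}. Substituting the change of coordinates $\tilde{x}=\tilde{T}_{\rm r}\bar{x}$ into~\eqref{system-stable}, using $(\tilde{T}_{\rm r}\bar{x})\otimes(\tilde{T}_{\rm r}\bar{x})=(\tilde{T}_{\rm r}\otimes\tilde{T}_{\rm r})(\bar{x}\otimes\bar{x})$, and projecting the residual with $\tilde{T}_{\rm l}^\top$ (legitimate because $\tilde{T}_{\rm l}^\top\tilde{T}_{\rm r}=I$) yields~\eqref{system-balanced} with
$$\bar{A}(\varepsilon)=\tilde{T}_{\rm l}^\top\tilde{A}(\varepsilon)\tilde{T}_{\rm r},\quad \bar{B}(\varepsilon)=\tilde{T}_{\rm l}^\top\tilde{B},\quad \bar{c}^\top=\tilde{c}^\top\tilde{T}_{\rm r},\quad \bar{N}_j(\varepsilon)=\tilde{T}_{\rm l}^\top\tilde{N}_j\tilde{T}_{\rm r},\quad \bar{H}(\varepsilon)=\tilde{T}_{\rm l}^\top\tilde{H}(\tilde{T}_{\rm r}\otimes\tilde{T}_{\rm r}).$$
The proof is then a matter of inserting the explicit block forms and collecting the scalar factors $(2\varepsilon)^{\pm1/4}$ and $(p'')^{\pm1/4}$. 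To keep the bookkeeping clean I would abbreviate the $\varepsilon$-independent blocks $W_{\rm l}:=L_QU\Sigma^{-1/2}$ and $W_{\rm r}:=L_PV\Sigma^{-1/2}$, so that $\tilde{T}_{\rm l}=\mathrm{diag}\!\big((2\varepsilon)^{-1/4}W_{\rm l},(p'')^{-1/4}\big)$ and $\tilde{T}_{\rm r}=\mathrm{diag}\!\big((2\varepsilon)^{1/4}W_{\rm r},(p'')^{1/4}\big)$, both of which are available from~\eqref{square-transformation} since $L_P,L_Q$ and the SVD~\eqref{svd-small} are independent of $\varepsilon$.

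For the four quantities $\bar{A},\bar{B},\bar{c},\bar{N}_j$ the computation is a direct block multiplication that isolates exactly one power of $\varepsilon$. In $\bar{A}$ the prefactors $(2\varepsilon)^{-1/4}$ and $(2\varepsilon)^{1/4}$ cancel in the leading block, leaving $\bar{A}'=W_{\rm l}^\top A W_{\rm r}$, while the trailing block is $(p'')^{-1/4}(-\varepsilon)(p'')^{1/4}=-\varepsilon$, matching $\bar{A}(\varepsilon)$. Since $\tilde{B}$ and $\tilde{c}$ each touch only one factor, one reads off $\bar{B}'=W_{\rm l}^\top B$ with the single prefactor $(2\varepsilon)^{-1/4}$, and $\bar{c}^\top=(p'')^{1/4}\tilde{c}^\top$, both with $\varepsilon$-independent blocks. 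For $\bar{N}_j$, only the scalar entry $(p'')^{-1/4}$ of $\tilde{T}_{\rm l}^\top$ survives against the lower-left block $2b_j^\top M$ of $\tilde{N}_j$, and the leading block $(2\varepsilon)^{1/4}W_{\rm r}$ of $\tilde{T}_{\rm r}$ then supplies the one remaining prefactor $(2\varepsilon)^{1/4}$, giving $\bar{N}_j'=2(p'')^{-1/4}b_j^\top M W_{\rm r}$.

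The genuine obstacle is the quadratic term $\bar{H}(\varepsilon)=\tilde{T}_{\rm l}^\top\tilde{H}(\tilde{T}_{\rm r}\otimes\tilde{T}_{\rm r})$, where expanding the Kronecker factor $\tilde{T}_{\rm r}\otimes\tilde{T}_{\rm r}$ entrywise against the sparse pattern~\eqref{structure-H} is cumbersome. The simplification I would exploit is that $\tilde{H}$ has only its last row occupied, i.e. $\tilde{H}=\tilde{c}\,\tilde{h}^\top$ with $\tilde{h}^\top(\tilde{x}\otimes\tilde{x})=x^\top S x$ (this is exactly the identity established in deriving~\eqref{qdr-bil-system}). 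Because $\tilde{T}_{\rm l}^\top\tilde{c}=(p'')^{-1/4}\tilde{c}$, the left factor collapses to $\tilde{T}_{\rm l}^\top\tilde{H}=(p'')^{-1/4}\tilde{c}\,\tilde{h}^\top$, so $\bar{H}(\varepsilon)$ again has only its last row occupied and it suffices to track the scalar quadratic form through the coordinate change. Writing $\bar{x}=((\bar{x}')^\top,\bar{x}_{n+1})^\top$ so that $x=(2\varepsilon)^{1/4}W_{\rm r}\bar{x}'$, one obtains
$$\bar{H}(\varepsilon)(\bar{x}\otimes\bar{x})=(p'')^{-1/4}\tilde{c}\,\tilde{h}^\top(\tilde{x}\otimes\tilde{x})=(p'')^{-1/4}\tilde{c}\,(x^\top S x)=\sqrt{2\varepsilon}\,(p'')^{-1/4}\tilde{c}\,(\bar{x}')^\top\!\big(W_{\rm r}^\top S W_{\rm r}\big)\bar{x}'.$$
All $\varepsilon$-dependence is thereby forced into the single prefactor $\sqrt{2\varepsilon}$, and $\bar{H}'$ is the matrix whose last row represents the $\varepsilon$-independent quadratic form $(\bar{x}')^\top\big((p'')^{-1/4}W_{\rm r}^\top S W_{\rm r}\big)\bar{x}'$. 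This identifies all five $\varepsilon$-independent objects $\bar{A}',\bar{B}',\bar{N}_j',\bar{H}',\bar{c}$ and establishes the stated form. The only real care is in this last step: propagating the scalar form $x^\top S x$ rather than expanding the Kronecker product is what keeps the argument short and exposes the clean $\sqrt{2\varepsilon}$ scaling.
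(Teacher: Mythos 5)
Your proposal is correct and follows essentially the same route as the paper's proof: both project with $\tilde{T}_{\rm l}^\top$ and $\tilde{T}_{\rm r}$ from \eqref{square-transformation}, read off $\bar{A}',\bar{B}',\bar{c},\bar{N}_j'$ by block multiplication, and handle the quadratic term via the observations that $\tilde{T}_{\rm l}^\top\tilde{H}={p''}^{-\frac{1}{4}}\tilde{H}$ and that the $\varepsilon$-scaling enters only through the upper block of $\tilde{T}_{\rm r}$, contributing $\sqrt[4]{2\varepsilon}$ twice. Your bookkeeping for $\bar{H}$ (propagating the scalar form $x^\top S x$) is just a restatement of the paper's factorization through $\tilde{T}_{\rm r}'$, so no substantive difference or gap exists.
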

%%%%%%%%%%%%%%%%%%%%%%%%%%%%%%%%%%%%%%%%%%%%%%%%%%%%%%%%%%%%%%%%%%%%%%%%%%%%%

\begin{proof}
  \leavevmode\newline
  It holds that
  $\bar{c}^\top = \tilde{c}^\top \tilde{T}_{\rm r} =
  (0,\ldots,0,\sqrt[4]{p''})^\top$.
  We obtain
  $\bar{A}(\varepsilon) =
  \tilde{T}_{\rm l}^\top \tilde{A}(\varepsilon) \tilde{T}_{\rm r}$
  and
  $\bar{B}(\varepsilon) = \tilde{T}_{\rm l}^\top \tilde{B}$.
  It follows that
  $$ \bar{A}' = \left( \Sigma^{-\frac{1}{2}} \right)^{\top}
  U^\top L_Q^\top A L_P V \Sigma^{-\frac{1}{2}}
  \qquad \mbox{and} \qquad
  \bar{B}' = \begin{pmatrix}
    \left( \Sigma^{-\frac{1}{2}} \right)^{\top} U^\top L_Q^\top B \\ 0 \\
  \end{pmatrix} . $$
  The matrices of the bilinear part become
  $$ \bar{N}_j(\varepsilon) =
  \tilde{T}_{\rm l}^\top \tilde{N}_j(\varepsilon) \tilde{T}_{\rm r} =
  \sqrt[4]{2\varepsilon}
  \begin{pmatrix}
    0 & 0 \\
    2 {p''}^{-\frac{1}{4}} b_j^\top M L_P V  \Sigma^{-\frac{1}{2}} & 0 \\
  \end{pmatrix}
  \quad \mbox{for} \;\; j=1,\ldots,n_{\rm in} . $$
  The quadratic part exhibits the structure~(\ref{structure-H}).
  It hold that
  $\bar{H}(\varepsilon) = \tilde{T}_{\rm l}^\top \tilde{H}
  (\tilde{T}_{\rm r} \otimes \tilde{T}_{\rm r})$. 
  We obtain $\tilde{T}_{\rm l}^\top \tilde{H} =  {p''}^{-\frac{1}{4}} \tilde{H}$
  due to the structure of $\tilde{T}_{\rm l}$.
  Let
  $$ \tilde{T}_{\rm r}' =
  \begin{pmatrix}
    L_P V \Sigma^{-\frac{1}{2}} & 0 \\
    0 & {p''}^{\frac{1}{4}} \\
  \end{pmatrix} . $$
  It follows that
  $\bar{H}(\varepsilon) =  \sqrt{2\varepsilon} {p''}^{-\frac{1}{4}} \tilde{H}
  (\tilde{T}_{\rm r}' \otimes \tilde{T}_{\rm r}')$,
  where a factor $\sqrt[4]{2\varepsilon}$ comes from
  each~$\tilde{T}_{\rm r}$
  and thus
  $\sqrt[4]{2\varepsilon} \cdot \sqrt[4]{2\varepsilon} = \sqrt{2\varepsilon}$.
  \hfill \mbox{}
\end{proof}

Lemma~\ref{lemma:balanced-system} shows that the differential equations
of the balanced system can be decoupled into the parts 
\begin{align} \label{balanced-parts}
  \begin{split}  
    \dot{\bar{x}}^*(t) & = \bar{A}' \bar{x}^*(t)
    + \bar{B}' \left( \frac{1}{\sqrt[4]{2\varepsilon}} u(t) \right) \\[1ex]
    \dot{\bar{x}}_{n+1}(t) & =
    - \varepsilon \bar{x}_{n+1}(t)
    + \displaystyle \sum_{j=1}^{n_{\rm in}} u_j(t) \bar{N}_j'
    ( \sqrt[4]{2\varepsilon} \bar{x}^*(t) )
    + \bar{H}'' ( (\sqrt[4]{2\varepsilon} \bar{x}^*(t)) \otimes
                (\sqrt[4]{2\varepsilon} \bar{x}^*(t) )) 
  \end{split}
\end{align}
with a row vector~$\bar{H}''$.
Initial values are transformed via
$$ \bar{x}(0) = \tilde{T}_{\rm r}^{-1} \tilde{x}(0) =
\begin{pmatrix}
  \frac{1}{\sqrt[4]{2\varepsilon}} \Sigma^{\frac{1}{2}} V^{\top} L_P^{-1} x_0 \\
       {p''}^{-\frac{1}{4}} x_0^\top M x_0 \\
\end{pmatrix} . $$
% where the last component is not relevant.
It follows that the solution $\bar{x}^*$ is directly proportional to
$\frac{1}{\sqrt[4]{2\varepsilon}}$
(change in input signal as well as change in initial values).
For $0 < \varepsilon < \frac{1}{2}$, this amplification is canceled by
multiplication of $\bar{x}^*$ with the factor $\sqrt[4]{2\varepsilon}$
in the last equation of~(\ref{balanced-parts}).
It follows that the system
\begin{align} \label{balanced-parts2}
  \begin{split}  
    \dot{\bar{x}}^*(t) & = \bar{A}' \bar{x}^*(t)
    + \bar{B}' u(t) \\
    \dot{\bar{x}}_{n+1}(t) & =
    -\varepsilon \bar{x}_{n+1}(t)
    + \displaystyle \sum_{j=1}^{n_{\rm in}} u_j(t) \bar{N}_j' \bar{x}^*(t)
    + \bar{H}''
    ( \bar{x}^*(t) \otimes \bar{x}^*(t) ) 
  \end{split}
\end{align}
with initial values $\bar{x}^*(0) = \Sigma^{\frac{1}{2}} V^{\top} L_P^{-1} x_0$
exhibits the same solution $\bar{x}_{n+1}$ as in
the system~(\ref{balanced-parts}).
Now~(\ref{balanced-parts2}) includes the parameter~$\varepsilon$
only in the scalar term of the last equation.
We may arrange $\varepsilon \rightarrow 0$ to eliminate the
dependence on the stabilization parameter completely.

%As an abbreviation, let
%$$ T_{\rm l} = \Sigma^{-\frac{1}{2}} V^\top L_Q^\top
%\qquad \mbox{and} \qquad
%T_{\rm r} = L_P U \Sigma^{-\frac{1}{2}} , $$
%which are matrices independent of the parameter~$\varepsilon$.

%%%%%%%%%%%%%%%%%%%%%%%%%%%%%%%%%%%%%%%%%%%%%%%%%%%%%%%%%%%%%%%%%%%%%%%%%%%%%
\subsection{Reduced-order model}
\label{sec:mor}
The concepts of reachability and observability allow one to devise an MOR strategy: State variables components that require a large energy to achieve (reach)
or generate a low energy in the output (observation) should be truncated.
In the balanced systems, a state variable is hard to reach if and only if
it produces a low output energy.
In contrast to the linear case,
error estimates are not available in the case
of quadratic-bilinear systems yet.

%We construct an ROM of dimension~$r$ now.
Given a reduced-order dimension $r \in \nat$, we assume that the stabilization parameter $\varepsilon>0$ is chosen
sufficiently small such that
\begin{equation} \label{sv-large}
  \sigma_{n+1} := \textstyle \sqrt{\frac{p''}{2\varepsilon}} > \sigma_{n-r} .
\end{equation}
Hence $\sigma_{n+1}$ belongs to the set of the $r$ dominant
singular values.
We partition the SVD~(\ref{svd}) into
\begin{equation} \label{svd-partition}
  \tilde{L}_Q^\top \tilde{L}_P =
  \begin{pmatrix} \tilde{U}_1 & \tilde{U}_2 \\ \end{pmatrix}
  \begin{pmatrix}
  \tilde{\Sigma}_1 & 0 \\ 0 & \tilde{\Sigma}_2 \\
  \end{pmatrix}
  \begin{pmatrix} \tilde{V}_1^\top \\ \tilde{V}_2^\top \\ \end{pmatrix}
\end{equation}
with $\tilde{\Sigma}_2 \in \real^{r \times r}$,
$\tilde{U}_2 , \tilde{V}_2 \in \real^{(n+1) \times r}$.
The associated projection matrices
$\tilde{T}_{\rm l / r} \in \real^{(n+1) \times r}$
read as
\begin{equation}\label{eq:Tlr-truncation}
  \tilde{T}_{\rm l} =
\tilde{L}_Q \tilde{U}_2 \tilde{\Sigma}_2^{-\frac{1}{2}}
\qquad \mbox{and} \qquad
\tilde{T}_{\rm r} =
\tilde{L}_P \tilde{V}_2 \tilde{\Sigma}_2^{-\frac{1}{2}}. 
\end{equation}
Due to the ascending order of the singular values and the
condition~(\ref{sv-large}), the MOR truncates state variables, which are
both hard to reach and difficult to observe.

The reduced-order model (ROM) of the quadratic-bilinear
system~(\ref{system-stable}) becomes
\begin{align} \label{system-reduced}
\begin{split}  
  \dot{\bar{x}}(t) & = \bar{A} \bar{x}(t)
  + \bar{B} u(t)
  + \displaystyle \sum_{j=1}^{n_{\rm in}} u_j(t) \bar{N}_j \bar{x}(t)
  + \bar{H} ( \bar{x}(t) \otimes \bar{x}(t) ) \\ 
  \bar{y}(t) & = \bar{c}^\top \bar{x}(t) 
\end{split} 
\end{align}
with the solution $\bar{x} : [0,t_{\rm end}] \rightarrow \real^r$
and the downsized matrices
\begin{equation} \label{reduced-matrices}
  \begin{array}{l}
  \bar{A} = \tilde{T}_{\rm l}^\top \tilde{A} \tilde{T}_{\rm r} ,
  \qquad
  \bar{B} = \tilde{T}_{\rm l}^\top \tilde{B} , \qquad
  \bar{c}^\top = \tilde{c}^\top \tilde{T}_{\rm r} , \\[1ex]
  \bar{N}_j = \tilde{T}_{\rm l}^\top \tilde{N}_j \tilde{T}_{\rm r} , \qquad
  \bar{H} = \tilde{T}_{\rm l}^\top \tilde{H}
  ( \tilde{T}_{\rm r} \otimes \tilde{T}_{\rm r} ) . \\
  \end{array}
\end{equation}
%\begin{equation} \label{reduced-matrices}
%  \bar{A} = \tilde{T}_{\rm l}^\top \tilde{A} \tilde{T}_{\rm r} ,
%  \quad
%  \bar{b} = \tilde{T}_{\rm l}^\top \tilde{b} , \quad
%  \bar{c}^\top = \tilde{c}^\top \tilde{T}_{\rm r} , \quad
%  \bar{N} = \tilde{T}_{\rm l}^\top \tilde{N} \tilde{T}_{\rm r} , \quad
%  \bar{H} = \tilde{T}_{\rm l}^\top \tilde{H}
%  ( \tilde{T}_{\rm r} \otimes \tilde{T}_{\rm r} ) .
%\end{equation}
The output vector is
% $\bar{c} = (0,\ldots,0,\sqrt[4]{ p'' / (2\varepsilon) })^\top$.
%\begin{equation} \label{outputvector-reduced}
$$ \bar{c}^\top = \left( 0,\ldots,0,\sqrt[4]{ p'' } \right) . $$
%\end{equation}
Thus the output is just a multiple of the final state variable as
in the quadratic-bilinear system~(\ref{system-stable}).
Initial values $\bar{x}(0)$ have to be determined from~\eqref{ivp}.
The balanced truncation method preserves the local asymptotic stability
of the equilibrium $\tilde{x}_{\rm eq}=0$ in autonomous
systems~(\ref{qdr-bil-system}) with $u \equiv 0$,
see~\cite{benner-goyal}.

The computational effort for the matrices~$\bar{N}_j$
in~(\ref{reduced-matrices}) is negligible because only the last row
of~$\tilde{N}_j$ is non-zero. 
The computation of the matrix~$\bar{H} \in \real^{r \times r^2}$ represents
the most expensive part in the projections~\eqref{reduced-matrices}.
In~\cite[p.~245]{benner-breiten}, an algorithm is outlined to
construct~$\bar{H}$ without calculating the Kronecker product
$\tilde{T}_{\rm r} \otimes \tilde{T}_{\rm r}$ explicitly.
In our case, the effort becomes even lower, since the matrix~$\bar{H}$
exhibits the structure~\eqref{structure-H} of~$H$.
The entries of the matrix
\begin{equation} \label{T-S-T}
  \bar{S} = \tilde{T}_{\rm r}^\top S \tilde{T}_{\rm r} \in \real^{r \times r},
\end{equation}
%$$ ( \tilde{H} ( \tilde{T}_{\rm r} \otimes \tilde{T}_{\rm r} ) )_{n+1,(j-1)r+i}
%= z_i^\top S z_j
with the symmetric matrix~$S$ from~(\ref{symmetricmatrix}),
yield the last row of the intermediate matrix
$\tilde{H} ( \tilde{T}_{\rm r} \otimes \tilde{T}_{\rm r} )$,
whereas the other rows are zero.
Thus the effort mainly consists in the computation of
matrix-matrix products in~\eqref{T-S-T}.

%%%%%%%%%%%%%%%%%%%%%%%%%%%%%%%%%%%%%%%%%%%%%%%%%%%%%%%%%%%%%%%%%%%%%%%%%%%%%
\begin{remark}
We obtain 
\begin{equation} \label{barA}
  \bar{A} =
  \begin{pmatrix}
    \bar{A}^* & 0 \\ 0 & - \varepsilon \\
  \end{pmatrix}
\end{equation}
with a matrix $\bar{A}^* \in \real^{(r-1) \times (r-1)}$ independent of
$\varepsilon$.
This allows us in principle to consider the limit
$\varepsilon \rightarrow 0$ in the matrix~(\ref{barA}).
%On the other hand, taking the limits $\varepsilon \rightarrow 0$ for
%other matrices and vectors is dubious, since these limits either do
%not exist or become zero.
%$$ \lim_{\varepsilon} \bar{N}(\varepsilon) = 0 . $$
%However, using this limit eliminiates the information from the bilinear
%term and thus should not be taken.
\end{remark}
%%%%%%%%%%%%%%%%%%%%%%%%%%%%%%%%%%%%%%%%%%%%%%%%%%%%%%%%%%%%%%%%%%%%%%%%%%%%%

The ROM~(\ref{system-reduced}) exhibits the same structure as
the quadratic-bilinear system~(\ref{system-stable}) in the nonlinear terms.

%%%%%%%%%%%%%%%%%%%%%%%%%%%%%%%%%%%%%%%%%%%%%%%%%%%%%%%%%%%%%%%%%%%%%%%%%%%%%
\begin{theorem} \label{thm:rom}
  Let
  $\bar{x} = (\bar{x}_1,\ldots,\bar{x}_r)^\top$ and
  $\bar{x}^* = (\bar{x}_1,\ldots,\bar{x}_{r-1})^\top$.
The reduced system~(\ref{system-reduced}) has
the equivalent form
\begin{align} \label{system-reduced2}
  \begin{split}
  \dot{\bar{x}}^*(t) & = \bar{A}^* \bar{x}^*(t) + \bar{B}^* u(t) \\
  \dot{\bar{x}}_{r}(t)  & = - \varepsilon \bar{x}_r(t)
  + \displaystyle \sum_{j=1}^{n_{\rm in}} u_j(t) \bar{N}_j^* \bar{x}^*(t)
  + \bar{H}^* ( \bar{x}^*(t) \otimes \bar{x}^*(t) ) \\
  \bar{y}(t) & = \sqrt[4]{ p'' } \; \bar{x}_r(t) 
  \end{split}
\end{align}
with $p''$ from~\eqref{pprime}, the matrix~$\bar{A}^*$ from~(\ref{barA}),
and a modified matrix~$\bar{B}^*$ and modified row vectors $\bar{N}_j^*,\bar{H}^*$.
\end{theorem}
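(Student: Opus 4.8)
The plan is to verify~\eqref{system-reduced2} by computing each reduced matrix in~\eqref{reduced-matrices} explicitly, exploiting a block structure of the truncated projectors $\tilde T_{\rm l}, \tilde T_{\rm r}$ that isolates the final ($y$-)coordinate. Once this block structure is available, every assertion reduces to routine block-matrix multiplication.

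First I would record the block form of the projectors. By the selection rule~\eqref{sv-large} and the ascending order of the $\sigma_i$, the $r$ retained singular values are $\sigma_{n+1}=\sqrt{p''/(2\varepsilon)}$ together with the $r-1$ largest among $\sigma_1,\ldots,\sigma_n$. In the full factors~\eqref{eq:SVD-tilde} the $(n+1)$-th columns of $\tilde U$ and $\tilde V$ are the unit vector $e_{n+1}$, while every other column has a vanishing last entry; hence selecting the dominant columns yields
$$ \tilde U_2 = \begin{pmatrix} U_2 & 0 \\ 0 & 1 \end{pmatrix}, \qquad \tilde V_2 = \begin{pmatrix} V_2 & 0 \\ 0 & 1 \end{pmatrix}, $$
with $U_2,V_2\in\real^{n\times(r-1)}$, and $\tilde\Sigma_2$ diagonal. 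Substituting into~\eqref{eq:Tlr-truncation} and using the block factorizations of $\tilde L_P,\tilde L_Q$, the powers of $2\varepsilon$ cancel in the scalar block exactly as in~\eqref{square-transformation}, giving
$$ \tilde T_{\rm r} = \begin{pmatrix} T_{\rm r} & 0 \\ 0 & \sqrt[4]{p''} \end{pmatrix}, \qquad \tilde T_{\rm l} = \begin{pmatrix} T_{\rm l} & 0 \\ 0 & {p''}^{-1/4} \end{pmatrix}, $$
for appropriate blocks $T_{\rm r},T_{\rm l}\in\real^{n\times(r-1)}$.

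With these block projectors I would evaluate the remaining reduced matrices. The block-diagonality of $\bar A$ is already stated in~\eqref{barA}. For $\bar B=\tilde T_{\rm l}^\top\tilde B$ the zero last row of $\tilde B$ forces the last component of $\bar B$ to vanish, so $\bar B=(\bar B^{*\top},0)^\top$. For $\bar N_j=\tilde T_{\rm l}^\top\tilde N_j\tilde T_{\rm r}$ the matrix $\tilde N_j$ is nonzero only in its lower-left block, and sandwiching it between the block-diagonal projectors produces a matrix whose sole nonzero part is a row vector $\bar N_j^*$ in the last row, coupling only to the $\bar x^*$ coordinates. Finally $\bar c^\top=\tilde c^\top\tilde T_{\rm r}=(0,\ldots,0,\sqrt[4]{p''})$ yields the stated output relation.

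The step demanding the most care is the quadratic term, where one must check that it feeds back only through $\bar x^*$. Here I would use the identity $\tilde H(\tilde v\otimes\tilde v)=(0,\tilde v^\top\tilde S\tilde v)^\top$ for all $\tilde v\in\real^{n+1}$, which follows from the structure~\eqref{structure-H} with $\tilde S=\mathrm{diag}(S,0)$. Setting $\tilde v=\tilde T_{\rm r}\bar x$ gives $\tilde H(\tilde T_{\rm r}\otimes\tilde T_{\rm r})(\bar x\otimes\bar x)=(0,\bar x^\top\bar S\bar x)^\top$ with $\bar S=\tilde T_{\rm r}^\top\tilde S\tilde T_{\rm r}$, and left multiplication by $\tilde T_{\rm l}^\top$ scales this single entry by ${p''}^{-1/4}$. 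Because $\tilde S$ has a zero last row and column, the block form of $\tilde T_{\rm r}$ forces $\bar S=\mathrm{diag}(\bar S^*,0)$ with $\bar S^*=T_{\rm r}^\top S T_{\rm r}$; consequently $\bar x^\top\bar S\bar x=(\bar x^*)^\top\bar S^*\bar x^*$ is independent of $\bar x_r$, and $\bar H^*$ is read off as the $1$-matricization of ${p''}^{-1/4}\bar S^*$. Collecting components then reproduces~\eqref{system-reduced2}: the first $r-1$ rows carry neither bilinear nor quadratic terms and give $\dot{\bar x}^*=\bar A^*\bar x^*+\bar B^* u$, while the last row assembles the $-\varepsilon\bar x_r$ term with the bilinear and quadratic contributions in $\bar x^*$.
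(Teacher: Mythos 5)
Your proposal is correct and follows exactly the argument the paper leaves implicit when it declares the proof ``straightforward'': under condition~\eqref{sv-large} the truncated SVD factors $\tilde{U}_2,\tilde{V}_2$ inherit the block-diagonal structure of \eqref{eq:SVD-tilde}, so the truncated projectors $\tilde{T}_{\rm l},\tilde{T}_{\rm r}$ are block-diagonal with scalar blocks ${p''}^{-1/4}$ and ${p''}^{1/4}$, and the reduced matrices in \eqref{reduced-matrices} then decouple exactly as in \eqref{system-reduced2}, with the identity $\tilde{H}(\tilde v\otimes\tilde v)=(0,\ldots,0,\tilde v^\top\tilde S\tilde v)^\top$ handling the quadratic term as in the paper's discussion of \eqref{T-S-T}. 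Your verification that the $\varepsilon$-powers cancel in the scalar blocks and that $\bar S=\operatorname{diag}(\bar S^*,0)$ supplies precisely the details the paper omits, so no gap remains.
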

%%%%%%%%%%%%%%%%%%%%%%%%%%%%%%%%%%%%%%%%%%%%%%%%%%%%%%%%%%%%%%%%%%%%%%%%%%%%
The proof is straightforward.
The structure of the system~(\ref{system-reduced2}) implies two
benefits for solving initial value problems in comparison to
general quadratic-bilinear systems:
\begin{enumerate}
\item
  If an implicit time integration scheme is used, then nonlinear systems
  of algebraic equations can be avoided and only linear systems
  have to be solved.
\item
  Since the output is just a constant multiple of a single state variable,
  an adaptive time step size selection can be performed by a
  local error control of this state variable only.
\end{enumerate}
Moreover, the matrix~$\bar{H}^*$ in~(\ref{system-reduced2}) has the
structure~(\ref{structure-H}). 
We collect its non-zero entries in a symmetric matrix
$\bar{S}^* \in \real^{(r-1) \times (r-1)}$.
Now let $\varepsilon = 0$.
% which is the case described in Section~\ref{sec:simpl-bilinear}
In view of~(\ref{symmetricmatrix}), we consider the symmetric solution
$\bar{M}^*$ of the Lyapunov equation
$$ \bar{A}^{*\top} \bar{M}^* + \bar{M}^* \bar{A}^* = \bar{S}^* . $$
If the linear dynamical system with quadratic output
\begin{align} \label{reduced-linear}
  \begin{split}
  \dot{\bar{x}}^*(t) & = \bar{A}^* \bar{x}^*(t) + \bar{B}^* u(t) \\[1ex]
  \tilde{y}(t) & = \bar{x}^*(t)^\top \bar{M}^* \bar{x}^*(t) . \\
  \end{split}
\end{align}
is transformed into a quadratic-bilinear system as in
Section~\ref{sec:trafo-quadratic}, then the quad\-ra\-tic term
of system~(\ref{system-reduced2}) exactly appears.
However, the bilinear part becomes different, even if all
matrices~$\tilde{N}_j$ (and thus $\bar{N}_j^*$) were zero.
Hence the structure of the original system~(\ref{dynamicalsystem})
cannot be retrieved by this MOR.
Only in the autonomous case ($u \equiv 0$),
the dynamical systems~(\ref{system-reduced2}) and~(\ref{reduced-linear})
are equivalent ($\bar{y} = \sqrt[4]{ p'' } \tilde{y}$).

Lemma~\ref{lemma:balanced-system} implies that the stabilization
parameter~$\varepsilon$ influences only the scalar term of the last
differential equation in the ROM~(\ref{system-reduced2}).
If we choose $\varepsilon=0$ in~(\ref{barA}) or, equivalently,
in the scalar term of~(\ref{system-reduced2}),
then the output becomes independent of the parameter.
The value~$\varepsilon$ just has to be sufficiently small such that
the last singular value in~(\ref{sv-order}) belongs to the dominant
singular values used to determine the ROM.

%%%%%%%%%%%%%%%%%%%%%%%%%%%%%%%%%%%%%%%%%%%%%%%%%%%%%%%%%%%%%%%%%%%%%%%%%%%%%
\subsection{Low-rank approximations}
\label{sec:lowrank}
An MOR for the linear dynamical system~(\ref{dynamicalsystem})
with quadratic output can be performed by using the
linear dynamical system~(\ref{simo-system}) with multiple outputs
or the quadratic-bilinear system~(\ref{qdr-bil-system}) with single output.
Two criteria determine the efficiency of the approaches
in balanced truncation:
\begin{enumerate}
\item
  The decay of the singular values.
  A faster decay typically allows for a sufficiently accurate
  ROM of a lower dimension.
\item
  The computational effort to construct an ROM.
\end{enumerate}
Numerical computations indicate that the rate of decay is similar
for the singular values in test examples.
Thus an advantage in the quadratic-bilinear system formulation can be achieved
only by decreasing the computational effort.

The main part of the computational work for the balanced truncation technique
consists in the solution of the linear Lyapunov equations
(\ref{lyap-linear-P}) and (\ref{lyap-linear-Q}).
A general linear Lyapunov equation reads as
\begin{equation} \label{lyapunov-general}
  A G + G A^\top + F = 0
\end{equation}
for the unknown matrix~$G \in \real^{n \times n}$
with a given symmetric positive semi-definite matrix~$F \in \real^{n \times n}$.
If we apply direct methods of linear algebra to
solve~(\ref{lyapunov-general}),
then the computational complexity is $O(n^3)$ and
nearly independent of the rank of~$F$.
Consequently, we could solve the linear dynamical system~(\ref{simo-system})
including many outputs as well, where $F$ exhibits a high rank.

Alternatively, approximate methods yield low-rank factorizations of the
solution of the Lyapunov equation~(\ref{lyapunov-general}).
Efficient algorithms are achieved by iterations based on
the alternating direction implicit (ADI) technique,
see~\cite{li-white,lu-wachspress}.
A low-rank approximation reads as
$G \approx Z_G Z_G^\top$ with $Z_G \in \real^{n \times k}$
for some $k \ll n$.
An ADI technique requires a symmetric factorization
$F \approx Z_F Z_F^\top$ with $Z_F \in \real^{n \times k_F}$
and $k_F \le n$ as input.
However, the convergence properties as well as the computational
efficiency suffer from a large rank~$k_F$.
In~\cite[p.~10]{penzl}, the property $k_F \ll n$ is assumed in
the ADI method.

Concerning both systems~(\ref{simo-system}) and~(\ref{system-stable}),
an iterative computation of the reachability Gramian can be easily devised
because $F = B B^\top$ and thus $k_F = n_{\rm in}$
due to our assumption of a low number of inputs.
We obtain a low-rank approximation $P \approx Z_P Z_P^\top$
solving~(\ref{lyap-linear-P}) with $Z_P \in \real^{n \times k_P}$.
A low-rank factorization of the reachability Gramian also allows for
a fast computation of the value~(\ref{pprime}).

However, the observability Gramian requires a factorization
$F \approx Z_F Z_F^\top$
in the case of the linear dynamical system~(\ref{simo-system}),
where the rank may be large (possibly close to~$n$).
In the case of the quad\-ratic-bi\-linear system~(\ref{system-stable}),
the input matrix for~(\ref{lyap-linear-Q}) becomes
\begin{equation} \label{lyap-F}
  \hat{F} = SPS + 4 M B B^\top M \approx (S Z_P) (S Z_P)^\top +
  4 (MB) (MB)^\top .
\end{equation}
Now we obtain directly an approximate factorization
of~(\ref{lyap-F}) by
\begin{equation} \label{factor-F}
  Z_{\hat{F}} \approx \left( \; (S Z_P) \; , \; (2MB) \; \right)
  \in \real^{n \times (k_P+n_{\rm in})} ,
\end{equation}
where the number of columns is low since $k_P,n_{\rm in} \ll n$.
The rank of the factor~(\ref{factor-F}) may be smaller than $k_P+n_{\rm in}$,
which allows for a simplification to a full-rank factor
$Z_{\hat{F}} \in \real^{n \times k_{\hat{F}}}$ for some $k_{\hat{F}} < k_P + n_{\rm in}$.
Furthermore, a reduced factor~$Z_{\hat{F}}$ with $k_{\hat{F}}$ columns
can be obtained by using just the first $k_P' < k_P$ columns
of the factor~$Z_P$.
An iterative scheme solving~(\ref{lyap-linear-Q}) yields the factorization
$Q \approx Z_Q Z_Q^\top$ with $Z_Q \in \real^{n \times k_Q}$
for the observability Gramian.
For a general Lyapunov equation~(\ref{lyapunov-general}) with
$F = Z_F Z_F^\top$ and $Z_F \in \real^{n \times k_F}$,
$j$ iterations of the ADI method generate a factor with $j k_F$ columns,
see~\cite{penzl}.

%Concerning the computational effort,
%a similar rank $k_P \approx k_Q$ of the approximate factors
%is advantageous.
%The observability Gramian solving~(\ref{lyap-linear-Q}) results
%from an input matrix~(\ref{lyap-F}) with a rank up to $k_P+1$.
%Thus we include a factor $Z_{\hat{F}}^{(i)} \in \real^{n \times i}$
%with just the first $i$~columns of the matrix~(\ref{factor-F})
%for a low number~$i$ in the solution of the
%Lypapunov equations~(\ref{lyap-linear-Q}) to achieve $k_P \approx k_Q$.

The balanced truncation approach using approximate low-rank factors
represents a well-known strategy, see~\cite{gugercin-li}.
The detailed formulas can be found in~\cite{wolf}, for example.
We reduce to a dimension $r < n$, assuming the condition~(\ref{sv-large}) is satisfied.
Thus approximate factors $Z_P, Z_Q$ with ranks $k_P,k_Q \ge r-1$ are required
for the linear Lyapunov equations (\ref{lyap-linear-P}),(\ref{lyap-linear-Q}).
Symmetric decompositions $\tilde{P} \approx \tilde{Z}_{P} \tilde{Z}_{P}^\top$
and $\tilde{Q} \approx \tilde{Z}_{Q} \tilde{Z}_{Q}^\top$
for the matrices from (\ref{ansatzP}),(\ref{ansatzQ})
read as
$$ \tilde{Z}_{P} =
\begin{pmatrix}
  Z_P & 0 \\ 0 & \sqrt{\frac{p''}{2\varepsilon}} \\ 
\end{pmatrix}
\qquad \mbox{and} \qquad
 \tilde{Z}_{Q} = \frac{1}{\sqrt{2\varepsilon}}
\begin{pmatrix}
  Z_Q & 0 \\ 0 & 1 \\ 
\end{pmatrix} . $$
Now we compute an SVD of the small matrix
$\tilde{Z}_Q^\top \tilde{Z}_P \in \real^{(k_Q+1) \times (k_P+1)}$.
A partition~(\ref{svd-partition}) of this SVD is used again
assuming an ascending order of the singular values.
We suppose that the condition~(\ref{sv-large}) is also satisfied in
this approximation.
The projection matrices for the reduction~(\ref{reduced-matrices})
are
$$ \tilde{T}_{\rm l} =
\tilde{Z}_Q \tilde{U}_2 \tilde{\Sigma}_2^{-\frac{1}{2}}
\qquad \mbox{and} \qquad
\tilde{T}_{\rm r} =
\tilde{Z}_P \tilde{V}_2 \tilde{\Sigma}_2^{-\frac{1}{2}} $$
with $\tilde{U}_2 \in \real^{(k_Q+1) \times r}$
and $\tilde{V}_2 \in \real^{(k_P+1) \times r}$.
We require just the $r$~largest singular values and their singular vectors
for the computation of the ROM.

%%%%%%%%%%%%%%%%%%%%%%%%%%%%%%%%%%%%%%%%%%%%%%%%%%%%%%%%%%%%%%%%%%%%%%%%%%%%%%%
  \subsection{Algorithm overview}\label{ssec:algorithm}
  We summarize here the main steps of our method. We recall that we have rewritten the original MISO system \eqref{dynamicalsystem} into an equivalent quadratic-bilinear system with a single output \eqref{system-stable}. The following steps provide an algorithm for the construction of an ROM in form of a small quadratic-bilinear system:
  \begin{enumerate}
  \item Solve the linear Lyapunov equation \eqref{lyap-linear-P} for $P$, and subsequently the linear Lyapunov equation \eqref{lyap-linear-Q} for $Q$, where $S$ is defined in \eqref{symmetricmatrix}. This is the most costly portion of the MOR procedure, and in our numerical experiments we will solve these equations using two approaches: (i) direct linear algebraic methods, and (ii) approximate iterative methods, namely ADI iteration.
  \item Compute factorizations $P = L_P L_P^\top$ and $Q = L_Q L_Q^\top$,
    if not already obtained within the solution procedure of step~1.
    \item Choose a small stabilization parameter $\varepsilon > 0$ and compute $p''$ using \eqref{pprime}.
    \item Compute the SVD \eqref{svd-small} of $L_Q^\top L_P$.
    \item Assemble the matrices from \eqref{eq:SVD-tilde} used to form the SVD in \eqref{svd}.
    \item Given some rank $r \ll n$, perform the partition \eqref{svd-partition}, and subsequently form $\tilde{T}_{\rm l}$ and $\tilde{T}_{\rm r}$ in \eqref{eq:Tlr-truncation}.
    \item Construct the size-$r$ reduced system \eqref{system-reduced} via the matrices in \eqref{reduced-matrices}.
      % and solve this reduced differential equation.
  \end{enumerate}
  In our overview above, we omit the technical details of many straightforward algebraic manipulations, which can be employed to substantially reduce the cost of direct computation in some of the steps. For example, one needs not explicitly assemble the full matrices in \eqref{eq:SVD-tilde} and just the dominant part of the SVD \eqref{svd-small} is required.

%%%%%%%%%%%%%%%%%%%%%%%%%%%%%%%%%%%%%%%%%%%%%%%%%%%%%%%%%%%%%%%%%%%%%%%%%%%%%
%%%                        Test Examples                                  %%%
%%%%%%%%%%%%%%%%%%%%%%%%%%%%%%%%%%%%%%%%%%%%%%%%%%%%%%%%%%%%%%%%%%%%%%%%%%%%%

\section{Numerical results}
\label{sec:example}
We apply the reduction approaches from the previous sections 
to three test examples.
In each case,
three types of MOR using balanced truncation are examined:
\begin{itemize}
\item[i)] the linear dynamical system~(\ref{simo-system})
  with multiple outputs by direct algorithms of linear algebra,
\item[ii)] the quadratic-bilinear system~(\ref{system-stable})
  with single output by direct algorithms of linear algebra,
\item[iii)] the quadratic-bilinear system~(\ref{system-stable})
  with single output using ADI iteration.
\end{itemize}
The numerical computations were performed by the software package 
MATLAB \cite{matlab2017} (version R2016a),
where the machine precision is around $\varepsilon_0 = 2 \cdot 10^{-16}$.
%We used the ADI algorithm from the package LYAPACK, see~\cite{penzl}.
We used the ADI algorithm from the Matrix Equation Sparse Solver (M.E.S.S.)
toolbox in MATLAB, see \cite{mmess}.
We note that alternative iterative approaches can effectively solve
the Lyapunov equations encountered in this paper (for example,
low-rank rational Krylov methods~\cite{druskin,kolesnikov}).
We focus on ADI methods in this paper for simplicity,
but acknowledge that alternative and perhaps better choices exist that
could improve the reported performance of our approach in this section.
The CPU times were measured on an iMAC with 3.4 GHz Inter Core i7 processor
and operation system OS X El Capitan.

In each test example, we compute discrete approximations of the
maximum absolute error and the integral mean value of the relative error
on a time interval $[0,T]$,
i.e., 
\begin{equation} \label{def-errors}
  E_{\rm abs} = \max_{t \in [0,T]} \left| \bar{y}(t) - y(t) \right|
  \qquad \mbox{and} \qquad
  E_{\rm rel} = \frac{1}{T} \int_0^T
  \frac{\left| \bar{y}(t) - y(t) \right|}{\left| y(t) \right|} \; {\rm d}t
\end{equation}
with $y$ from a full-order model (FOM) and $\bar{y}$ from an ROM.
On the one hand, the absolute error measures the maximum pointwise
discrepancy between the FOM trajectory and the ROM trajectory.
On the other hand, the relative error measures the discrepancy between
these two averaged over the trajectory.
Since~$y$ can have values close to zero, we expect that the
relative error can be large compared to the absolute error.
Our numerical results will observe this.

%%%%%%%%%%%%%%%%%%%%%%%%%%%%%%%%%%%%%%%%%%%%%%%%%%%%%%%%%%%%%%%%%%%%%%%%%%%%%
\subsection{Positive definite output matrix}
\label{sec:example-definite}
We construct a linear dynamical system~(\ref{dynamicalsystem})
of dimension $n=5000$.
A matrix $A' \in \real^{n \times n}$ is arranged using pseudo
random numbers with respect to a standard Gaussian distribution.
%(Reference on distribution of the eigenvalues?)
Let $\gamma$ be the largest real part of the eigenvalues of~$A'$.
We define the dense matrix
$A = A' - \lceil \gamma \rceil I$,
which implies an asymptotically stable system.
Furthermore, a single input is introduced using
the vector $B = (1,1,\ldots,1)^\top$.
We use the identity matrix $M=I$ in the definition of
the quadratic output.
This matrix is obviously symmetric and positive definite.
Even though this choice is simple, the identity matrix cannot be
well-approximated by a low-rank matrix.

As time-dependent input, we supply a chirp signal
\begin{equation} \label{chirp-signal}
  u(t) = \sin ( k(t) t ) \qquad \mbox{with} \qquad k(t) = k_0 t
\end{equation}
and the constant $k_0 = 0.1$.
All initial values are zero.
The total time interval $[0,100]$ is considered in the
transient simulations.
We use an explicit embedded Runge-Kutta method of convergence order 4(5) 
for computing numerical solutions of our initial value problems
(\texttt{ode45} in MATLAB).
This procedure uses step size selection by 
a local error control with relative tolerance
$\varepsilon_{\rm rel} = 10^{-6}$ and absolute tolerance
$\varepsilon_{\rm abs} = 10^{-8}$ in all state variables.
Thus high accuracy requirements are imposed.
Figure~\ref{fig:outputs} (left) shows the quadratic output
resulting from the numerical solution of~(\ref{dynamicalsystem}).

%%% Figure: Outputs %%%%%%%%%%%%%%%%%%%%%%%%%%%%%%%%%%%%%%%%%%%%%%%%%%%%%%%%%
\begin{figure}
\begin{center}
\includegraphics[width=6cm]{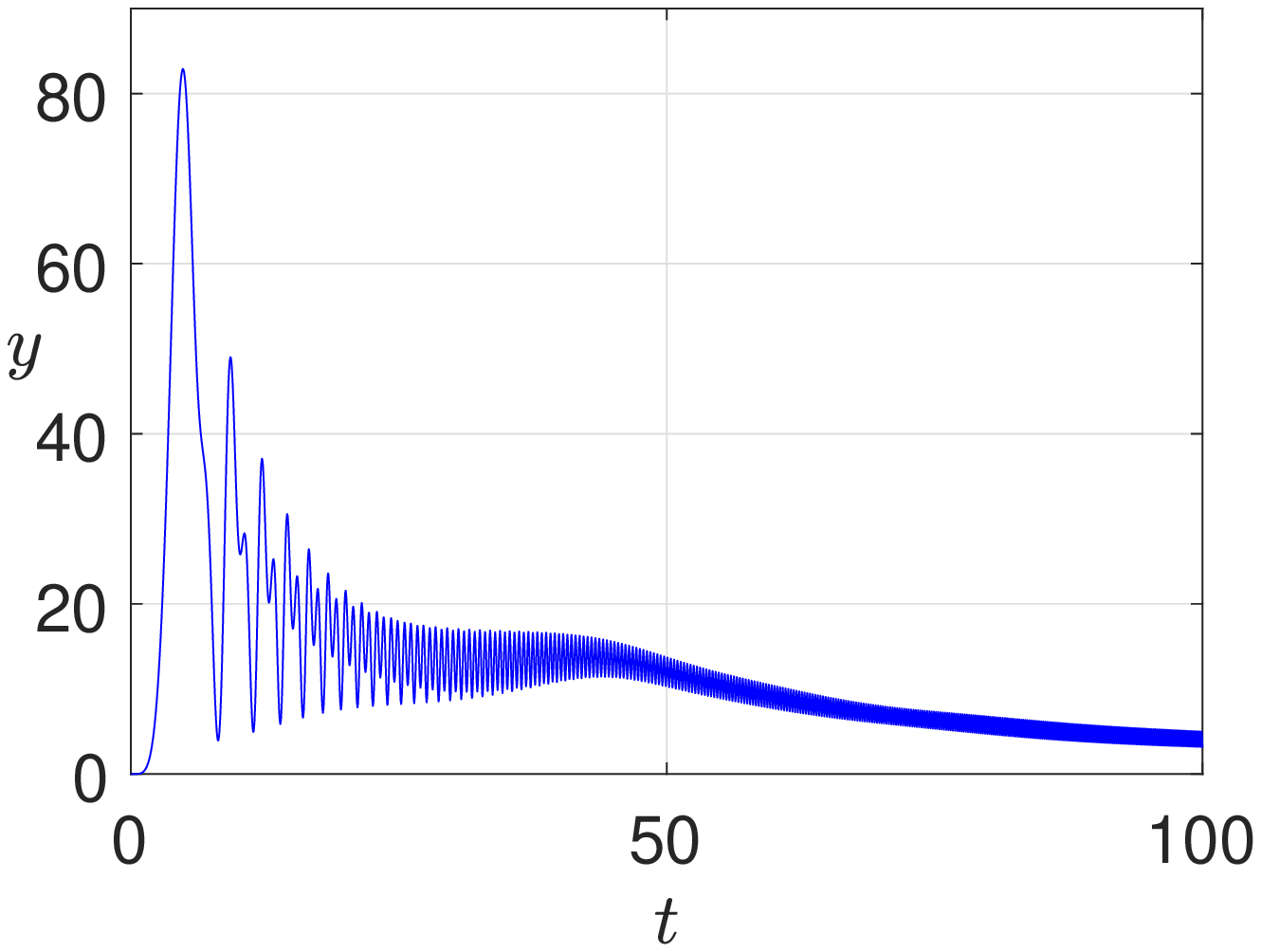}
\hspace{5mm}
\includegraphics[width=6cm]{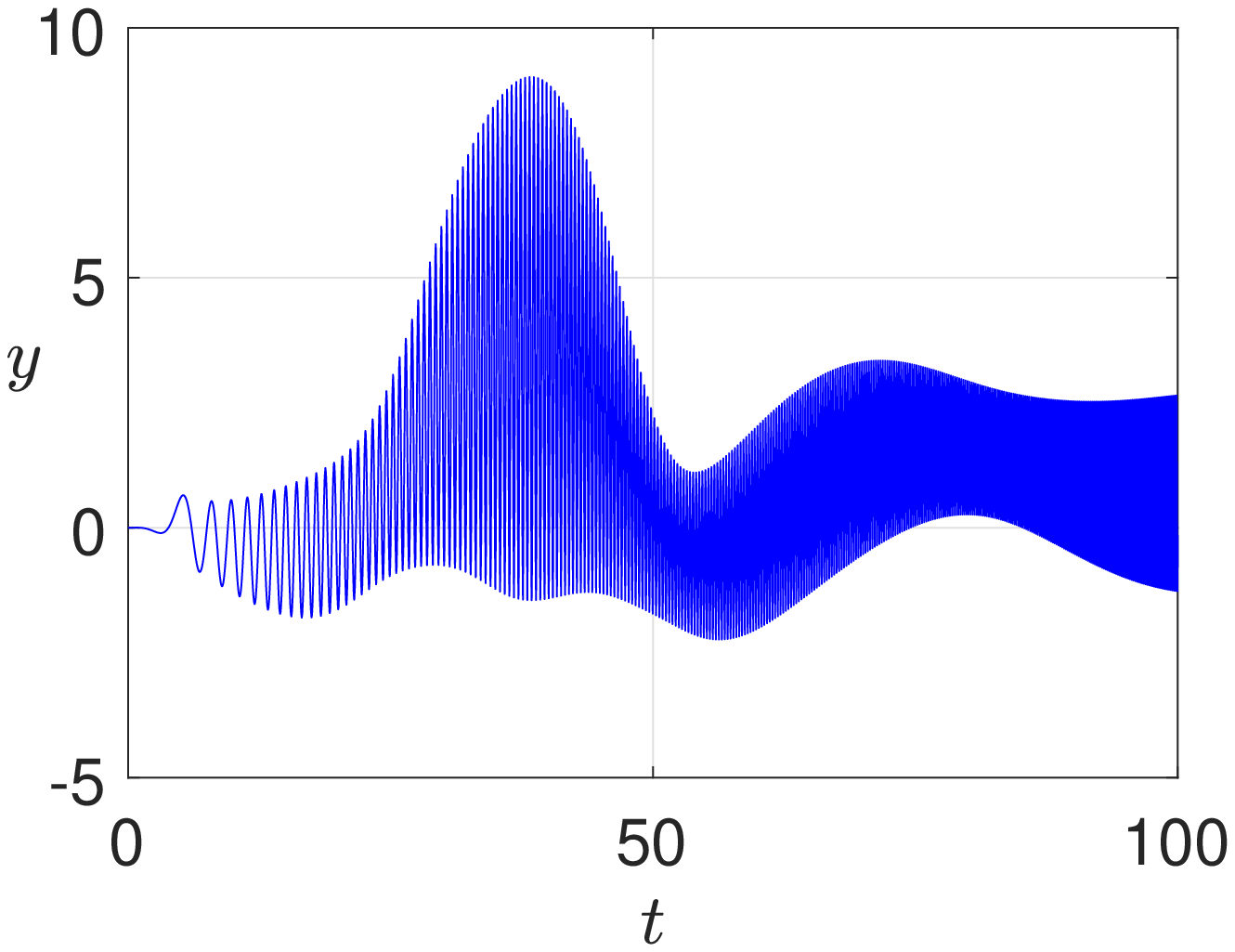}
\end{center}
\caption{Quadratic outputs of the linear dynamical systems for
  identity matrix (left) and indefinite matrix (right).}
\label{fig:outputs}
\end{figure}
%%%%%%%%%%%%%%%%%%%%%%%%%%%%%%%%%%%%%%%%%%%%%%%%%%%%%%%%%%%%%%%%%%%%%%%%%%%%%

On the one hand, we arrange the linear dynamical system~(\ref{simo-system}),
where it holds that $L^\top = I$.
Hence the number of outputs is equal to~$n$ in~(\ref{simo-system}).
The balanced truncation technique yields the Hankel singular values
in the linear case.
On the other hand,
we derive the quadratic-bilinear system~(\ref{system-stable})
including the stabilization parameter $\varepsilon = 10^{-8}$.
The balanced truncation scheme from Section~\ref{sec:decompositions}
produces other singular values.
The dominant singular values up to order~80 are illustrated
in descending magnitudes by Figure~\ref{fig:eye-sv} (left).
The largest singular value of the quadratic-bilinear
system~(\ref{system-stable}) has a special role, see~(\ref{sv-order}).
Thus we normalize the first singular value of~(\ref{simo-system}) and
the second singular value of~(\ref{system-stable}) to one.
Figure~\ref{fig:eye-sv} (right) shows the normalized singular values.
We observe the same rate of decay in both sets of singular values,
which indicates a similar potential for MOR by balanced truncation.

%%% Figure: Singular Values %%%%%%%%%%%%%%%%%%%%%%%%%%%%%%%%%%%%%%%%%%%%%%%%%
\begin{figure}
\begin{center}
\includegraphics[width=6cm]{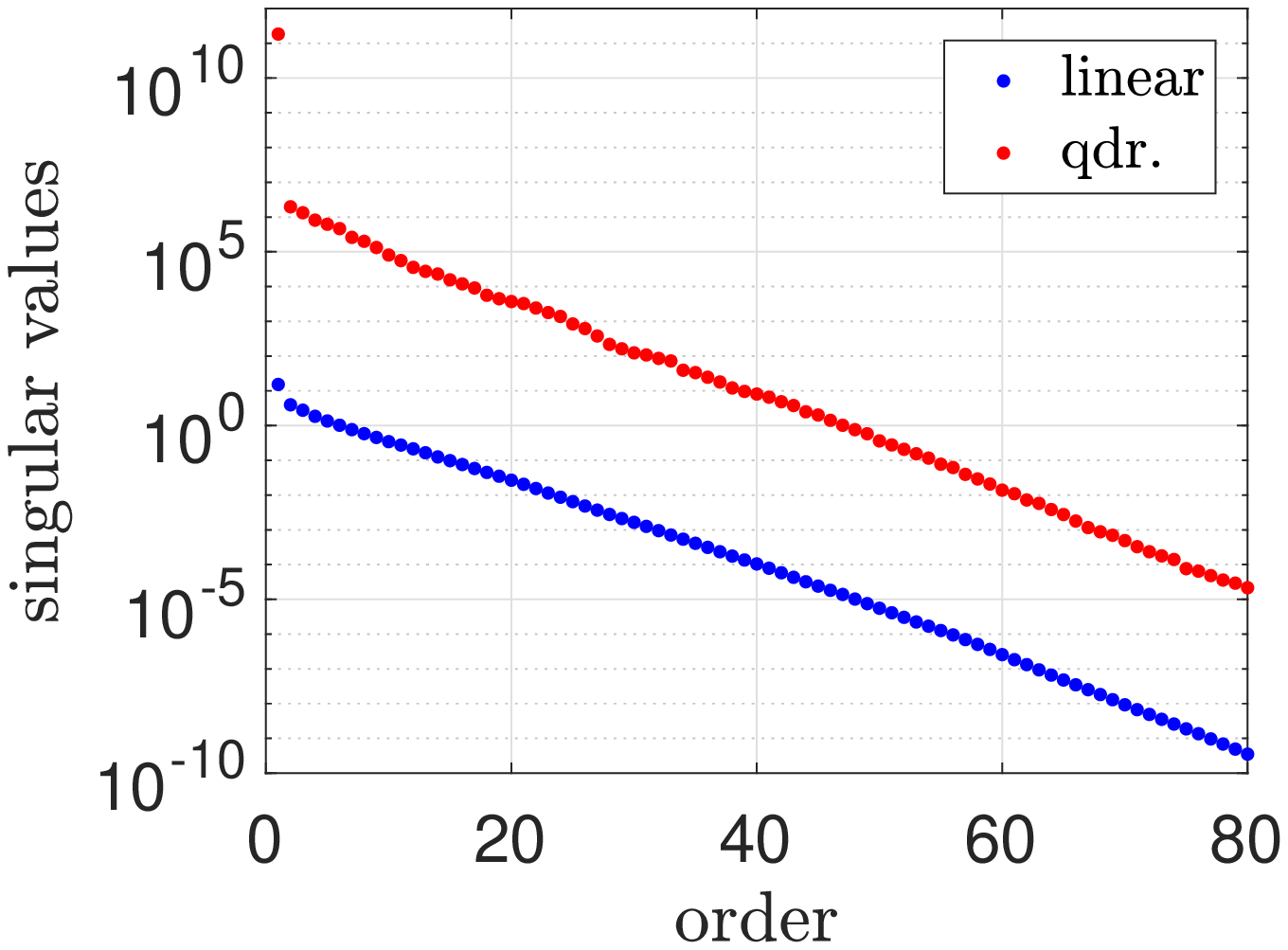}
\hspace{5mm}
\includegraphics[width=6cm]{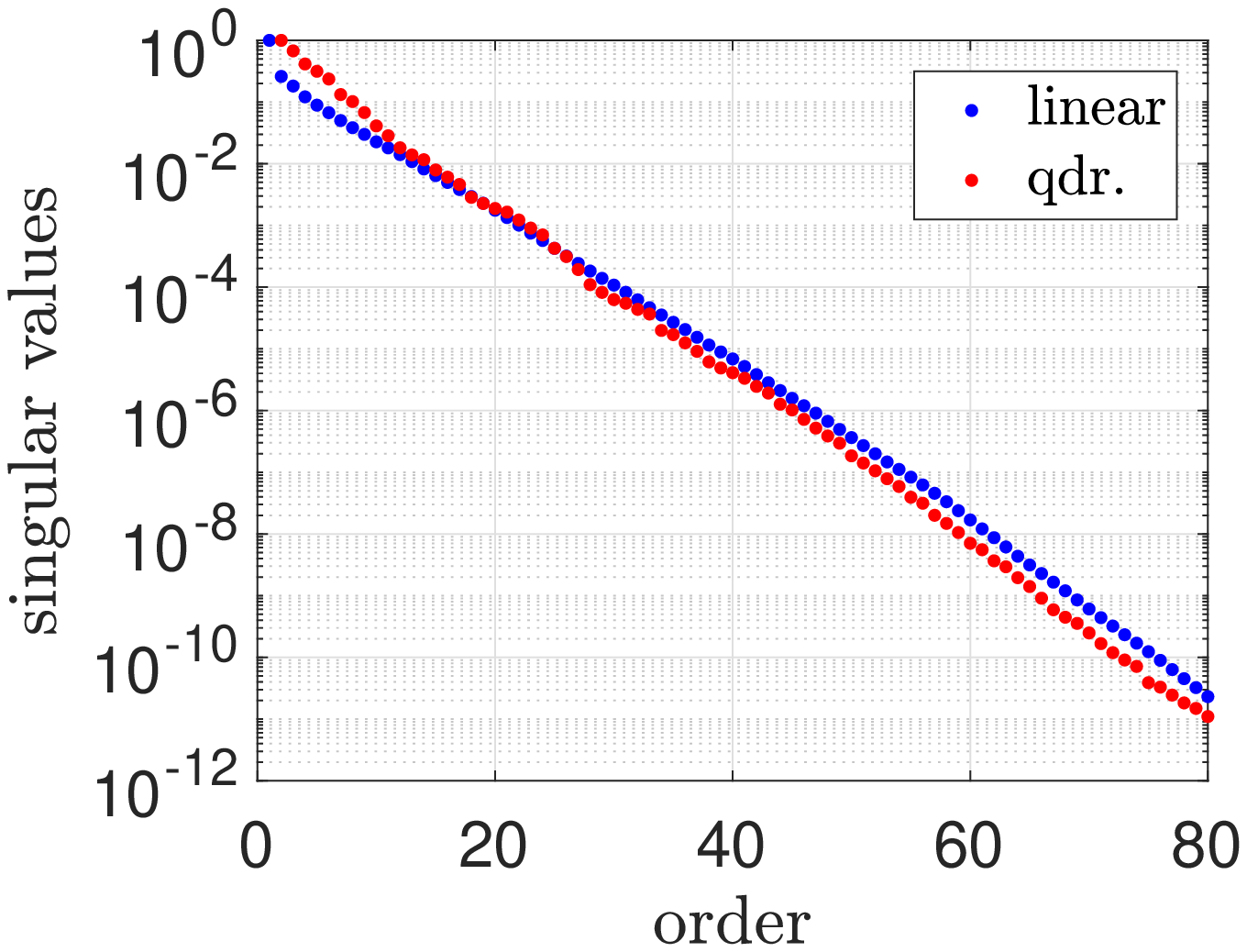}
\end{center}
\caption{Singular values (left) and their normalized values (right)
  for the two dynamical systems associated to identity output matrix.}
\label{fig:eye-sv}
\end{figure}
%%%%%%%%%%%%%%%%%%%%%%%%%%%%%%%%%%%%%%%%%%%%%%%%%%%%%%%%%%%%%%%%%%%%%%%%%%%%%

Since we apply direct linear algebra methods, each balanced truncation
approach yields square transformation matrices,
see~(\ref{square-transformation}) for the quadratic-bilinear case.
The projection matrices of the MOR result from the dominant
columns of these square matrices. 
Consequently, we obtain an ROM for an arbitrary dimension $r<n$.
We compute ROMs from the linear dynamical systems~(\ref{simo-system})
and from the quadratic-bilinear system~(\ref{system-stable})
for $r=5,6,\ldots,80$.
In the quadratic-bilinear case, we choose $\varepsilon = 0$
only in the reduction of the matrix~(\ref{barA}),
because numerical results show that the errors become smaller
as $\varepsilon \rightarrow 0$ in this matrix.

Furthermore, we solve the linear Lyapunov equations~(\ref{lyap-linear-P})
and~(\ref{lyap-linear-Q}) iteratively by an ADI method
for each~$r$ separately as described in Section~\ref{sec:lowrank}.
On the one hand,
the low-rank factor~$Z_P$ of the reachability Gramian is computed 
with rank $k_P = r + 10$ by $j=k_P$ iteration steps.
On the other hand,
only the first $r$~columns of~$Z_P$ are inserted in the
Lyapunov equation~(\ref{lyap-linear-Q}) and $j=10$ iteration steps
are performed.
It follows that the low-rank factor~$Z_Q$ of the observability Gramian
has rank $k_Q = j(r+1)$ due to~(\ref{factor-F}).
Each pair of iterative solutions implies an associated ROM.

The transient simulation of the FOM~(\ref{dynamicalsystem})
yields approximations at many time points with variable step sizes.
We integrate the ROMs by the same Runge-Kutta method with
local error control including the tolerances from above.
The integrator produces approximations of identical convergence order
at the predetermined time points.
We obtain discrete approximations of the errors~(\ref{def-errors})
by the differences in each time point,
which are depicted in Figure~\ref{fig:eye-error}.
Both the maximum absolute error and the mean relative error decrease
exponentially for increasing dimensions of the ROMs.
The errors start to stagnate at higher dimensions,
since the errors of the time integration become dominant.
Furthermore, tiny values of the exact solution appear close to the
initial time, which locally causes large relative errors.
The errors are lower for the linear dynamical system~(\ref{simo-system})
in comparison to the quadratic-bilinear system~(\ref{system-stable}),
although the associated singular values exhibit a similar rate of decay.
We suspect that the balanced truncation strategy works in general better for linear dynamical systems.
This suspicion can be corroborated by other facts: For example, ROMs for linear dynamical systems have error bounds that depend on the singular values, but error bounds depending on the singular values of a quadratic-bilinear case are not known.
The iterative solution of the Lyapunov equations associated to
the quadratic-bilinear system results in larger errors than the
direct solution, since several approximations are made in this procedure.

%%% Figure: Errors %%%%%%%%%%%%%%%%%%%%%%%%%%%%%%%%%%%%%%%%%%%%%%%%%%%%%%%%%%
\begin{figure}
\begin{center}
\includegraphics[width=6cm]{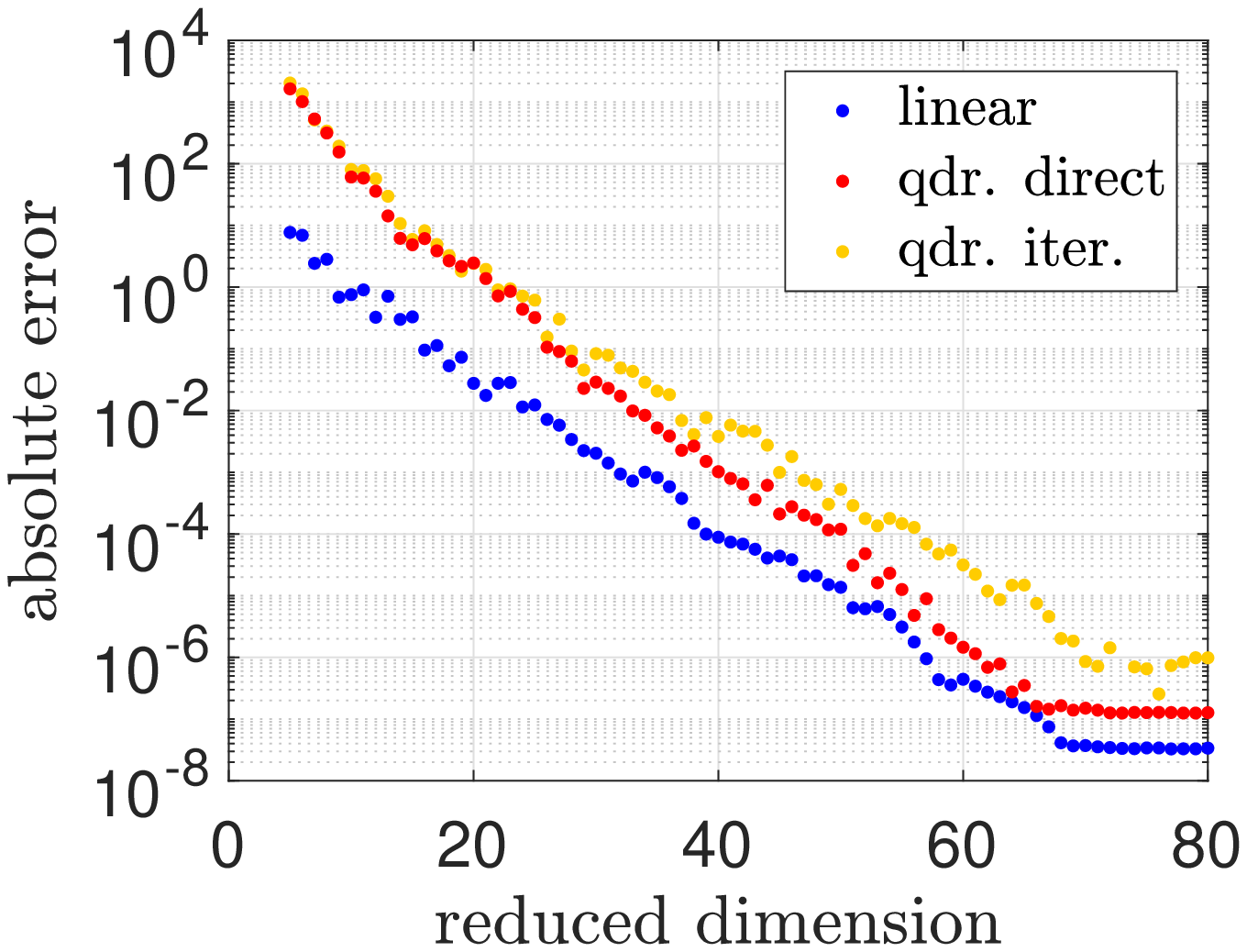}
\hspace{5mm}
\includegraphics[width=6cm]{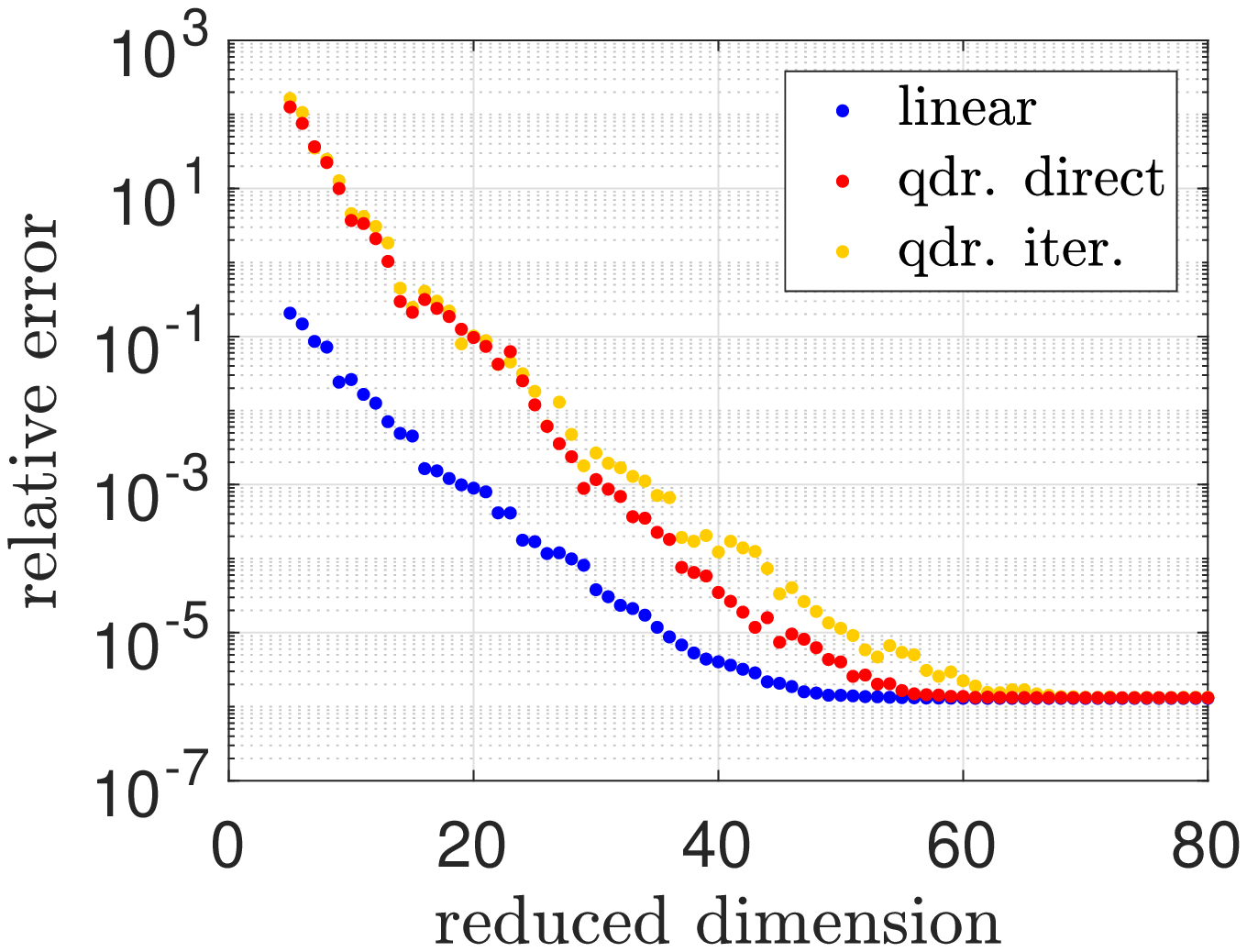}
\end{center}
\caption{Maximum absolute errors (left) and mean relative errors (right)
  for ROMs of different dimensions in the case of identity output matrix.}
\label{fig:eye-error}
\end{figure}
%%%%%%%%%%%%%%%%%%%%%%%%%%%%%%%%%%%%%%%%%%%%%%%%%%%%%%%%%%%%%%%%%%%%%%%%%%%%%
 
Now we analyze the CPU times of the three MOR techniques.
The CPU time for the time integration of the FOM~(\ref{dynamicalsystem})
is 8602.9 seconds.
Figure~\ref{fig:eye-cputimes1} (left) illustrates the effort for the
calculation of the projection matrices in the balanced truncation,
which includes the solution of Lyapunov equations and thus represents
the main part of the computation work.
In the direct approaches, the complete transformation matrices have to
be computed, which is indicated by constant CPU times.
In the iterative approach, the effort grows just slowly with
increasing reduced dimension.
Figure~\ref{fig:eye-cputimes2} depicts the CPU time for the
computation of the reduced matrices (left) \textit{and} the time integration
of the ROMs including the calculation of
the quantity of interest (right).
We observe that the computation work for the matrices is negligible.
The total speed up is shown in Figure~\ref{fig:eye-cputimes1} (right),
where both the construction and the transient simulation of the ROMs
is compared to the solution of the FOM.
The speedup is nearly constant for different reduced dimensions in the
direct approaches because the balanced truncation part dominates.
The iterative method exhibits a significantly higher speedup for
small dimensions,
whereas the speedup decreases for larger dimensions.

%%% Figure: CPU times %%%%%%%%%%%%%%%%%%%%%%%%%%%%%%%%%%%%%%%%%%%%%%%%%%%%%%%
\begin{figure}
\begin{center}
\includegraphics[width=6cm]{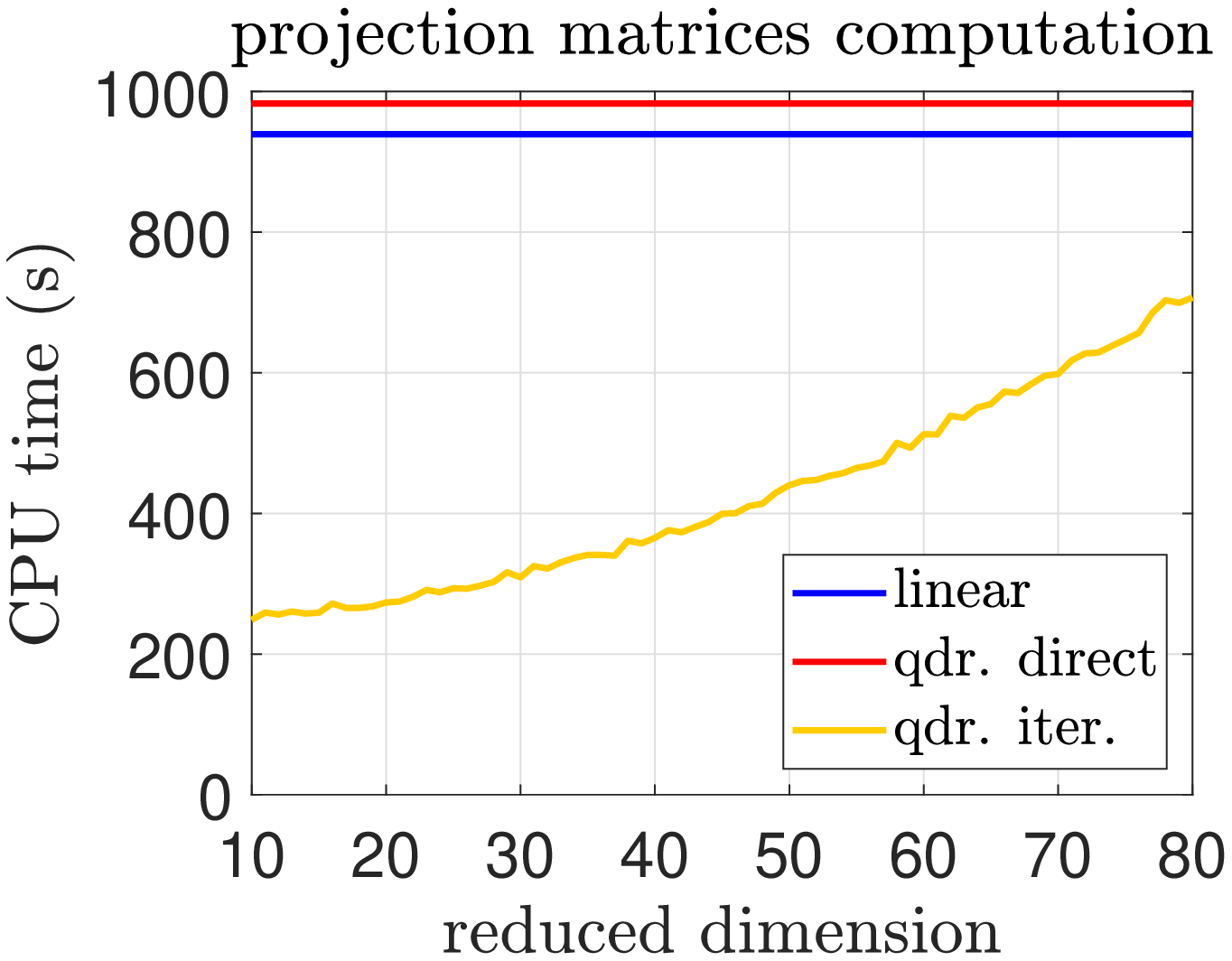}
\hspace{5mm}
\includegraphics[width=6cm]{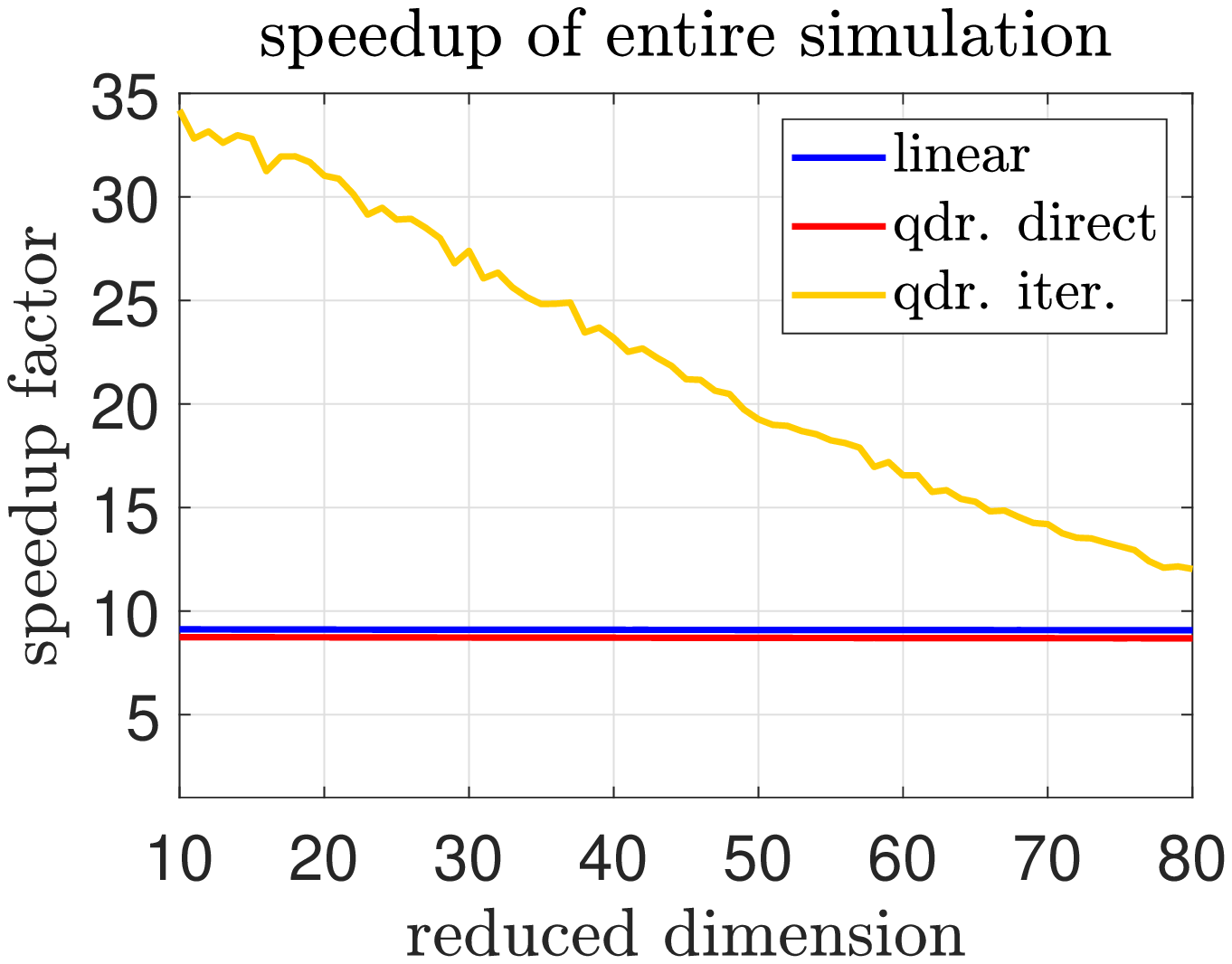}
\end{center}
\caption{CPU times for computation of projection matrices (left)
  and total speed ups (right) in the case of identity
  output matrix.}
\label{fig:eye-cputimes1}
\end{figure}
%%%%%%%%%%%%%%%%%%%%%%%%%%%%%%%%%%%%%%%%%%%%%%%%%%%%%%%%%%%%%%%%%%%%%%%%%%%%%

%%% Figure: CPU times %%%%%%%%%%%%%%%%%%%%%%%%%%%%%%%%%%%%%%%%%%%%%%%%%%%%%%%
\begin{figure}
\begin{center}
\includegraphics[width=6cm]{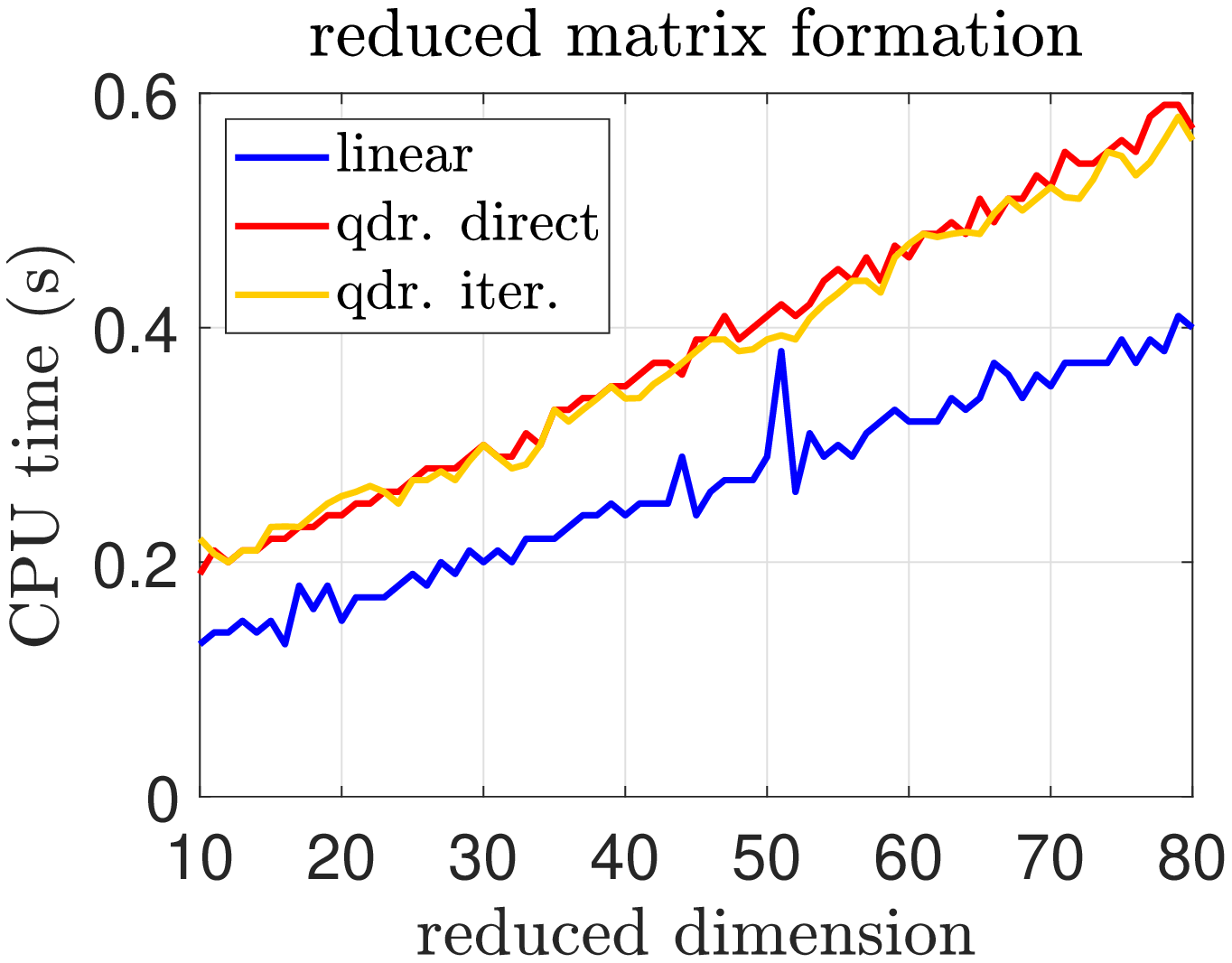}
\hspace{5mm}
\includegraphics[width=6cm]{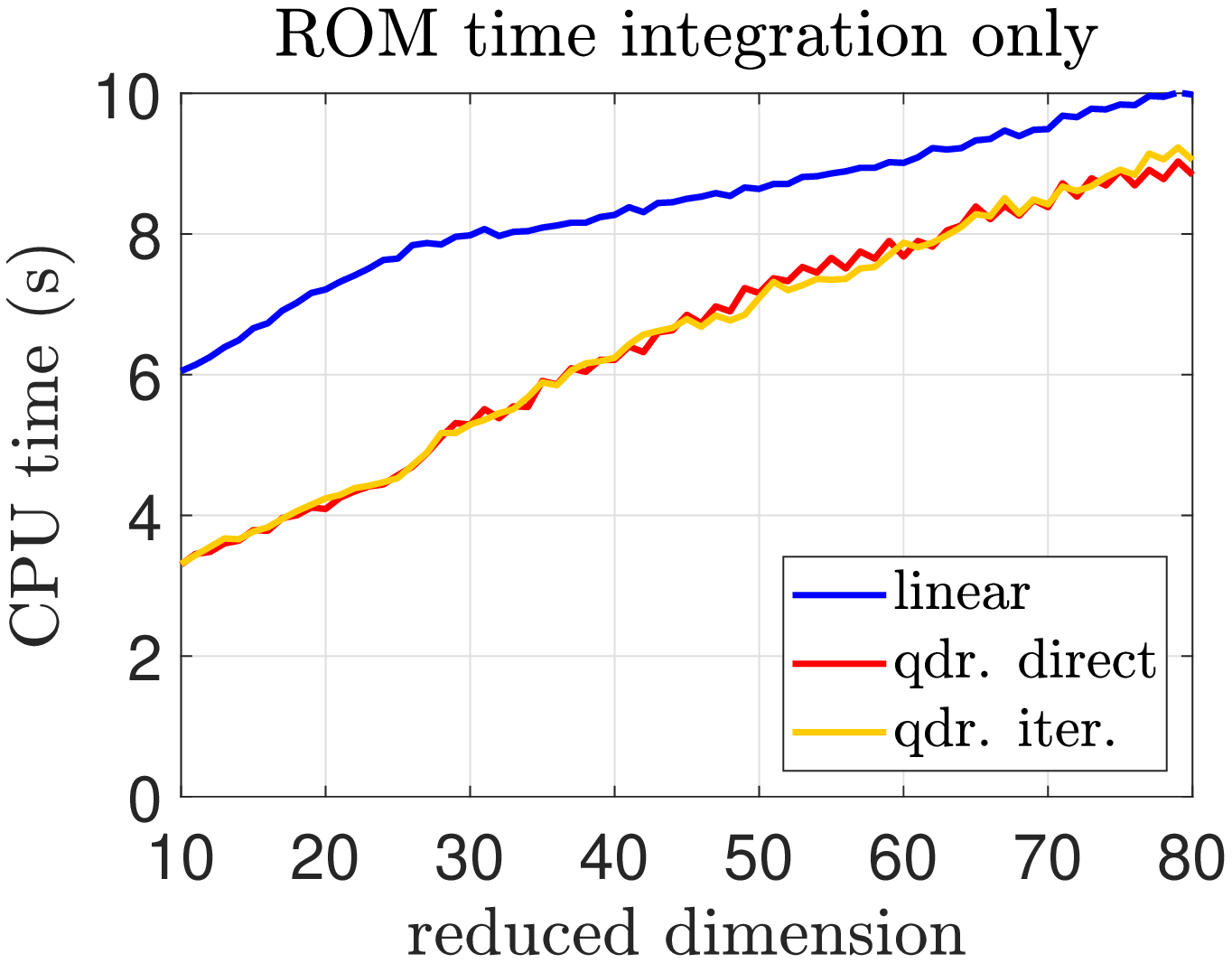}
\end{center}
\caption{CPU times for computation of reduced matrices (left)
  and transient simulation of ROMs (right) in the case of identity
  output matrix.}
\label{fig:eye-cputimes2}
\end{figure}
%%%%%%%%%%%%%%%%%%%%%%%%%%%%%%%%%%%%%%%%%%%%%%%%%%%%%%%%%%%%%%%%%%%%%%%%%%%%%

We conclude that the quadratic-bilinear approach using iterative solvers can achieve error comparable to the linear approach with substantially increased computational efficiency. For a fixed reduced dimension, choosing the quadratic approach decreases accuracy when compared to the linear approach; this accuracy drop is slightly exacerbated by use of an iterative solver for computing the Gramian matrices, cf. Figure~\ref{fig:eye-error}. However, the quadratic bilinear approach using an iterative solver is significantly faster than the linear approach. It is so efficient that one can compute a quadratic-bilinear reduced order model of significantly increased rank (and hence accuracy) for a fixed computational budget. For example, to achieve a relative error of approximately $10^{-5}$, we require a reduced dimension of approximately $r \approx 50$ for the quadratic-bilinear case, see Figure~\ref{fig:eye-error} (right). However, even at reduced rank 50, the quadratic-bilinear iterative approach is still approximately twice as fast as the linear or quadratic-bilinear direct approaches, see Figure~\ref{fig:eye-cputimes1} (right).

%%% Table: Stabilization Parameter %%%%%%%%%%%%%%%%%%%%%%%%%%%%%%%%%%%%%%%%%%
\begin{table} 
  \caption{Maximum difference between outputs from ROMs of dimension $r=20$
    for different stabilization parameters~$\varepsilon$ to
    reference value $\varepsilon = 10^{-8}$ 
    (differences are rounded to one digit).\label{tab:par}}  
  \begin{center}
    \begin{tabular}{lccccccc}
      value~$\varepsilon$ & $10^{-1}$ & $10^{-2}$ & $10^{-3}$ & $10^{-4}$ &
      $10^{-5}$ & $10^{-6}$ & $10^{-7}$ \\ \hline
      difference & $4 \cdot 10^{-8}$ &
      $3 \cdot 10^{-8}$ & $8 \cdot 10^{-9}$ & $5 \cdot 10^{-9}$ &
      $2 \cdot 10^{-8}$ & $2 \cdot 10^{-8}$  & $2 \cdot 10^{-8}$ \\
    \end{tabular}
  \end{center}
\end{table}
%%%%%%%%%%%%%%%%%%%%%%%%%%%%%%%%%%%%%%%%%%%%%%%%%%%%%%%%%%%%%%%%%%%%%%%%%%%%%

Finally, we investigate the choice of different stabilization
parameters~$\varepsilon$.
The focus is on ROMs of dimension $r=20$, where the transient outputs
are computed by the time integration described above.
The value $\varepsilon=0$ is used in the reduced matrices~(\ref{barA}) again.
We compute the maximum difference in time between the outputs for
different~$\varepsilon$ to the reference value $\varepsilon=10^{-8}$.
Table~\ref{tab:par} depicts the numerical results.
The differences are tiny and exhibit the same order or magnitude for
all~$\varepsilon$.
This property confirms the theoretical results in
Section~\ref{sec:decompositions}, which imply that the ROMs are independent
of~$\varepsilon$ except for the scalar entry in~(\ref{barA}).

%%%%%%%%%%%%%%%%%%%%%%%%%%%%%%%%%%%%%%%%%%%%%%%%%%%%%%%%%%%%%%%%%%%%%%%%%%%%%
\subsection{Indefinite output matrix}
We arrange a linear dynamical system~(\ref{dynamicalsystem})
of dimension $n=5000$ with a system matrix~$A$ and a vector~$B$
as in Section~\ref{sec:example-definite}.
(But using a different realization of the pseudo random numbers.)
We fill a matrix $M' \in \real^{n \times n}$ by pseudo random numbers
associated to a uniform distribution in $[-1,1]$.
Now the output matrix~$M := \frac{1}{2} (M' + M'^\top)$ is
dense, full-rank and symmetric.
The matrix is indefinite, and for our realization it has exactly $\frac{n}{2}$
positive eigenvalues and $\frac{n}{2}$ negative eigenvalues.
Thus the construction of the output matrix in the
linear dynamical system~(\ref{simo-system}) requires the
complete eigen-decomposition of~$M$.
We apply the chirp signal~(\ref{chirp-signal}) with $k_0=0.1$
as input again.
Figure~\ref{fig:outputs} (right) illustrates the quadratic output
of the system~(\ref{dynamicalsystem}),
which exhibits both positive and negative values.

We use balanced truncation by direct linear algebra methods
for the linear dynamical system~(\ref{simo-system}) with $n$ outputs and
the quadratic-bilinear system~(\ref{system-stable}) with single output,
where the stabilization parameter is $\varepsilon=10^{-8}$.
The resulting dominant singular values are depicted
in Figure~\ref{fig:ind-sv}.
Again the rate of decay is nearly identical in both cases.
%The ADI iteration yields the crucial part of the balanced
%truncation method for the quadratic-bilinear system~(\ref{system-stable}),
%where we use the same settings as in Section~\ref{sec:example-definite}. (ROLAND, what does the previous sentence mean?)

%%% Figure: Singular Values %%%%%%%%%%%%%%%%%%%%%%%%%%%%%%%%%%%%%%%%%%%%%%%%%
\begin{figure}
\begin{center}
\includegraphics[width=6cm]{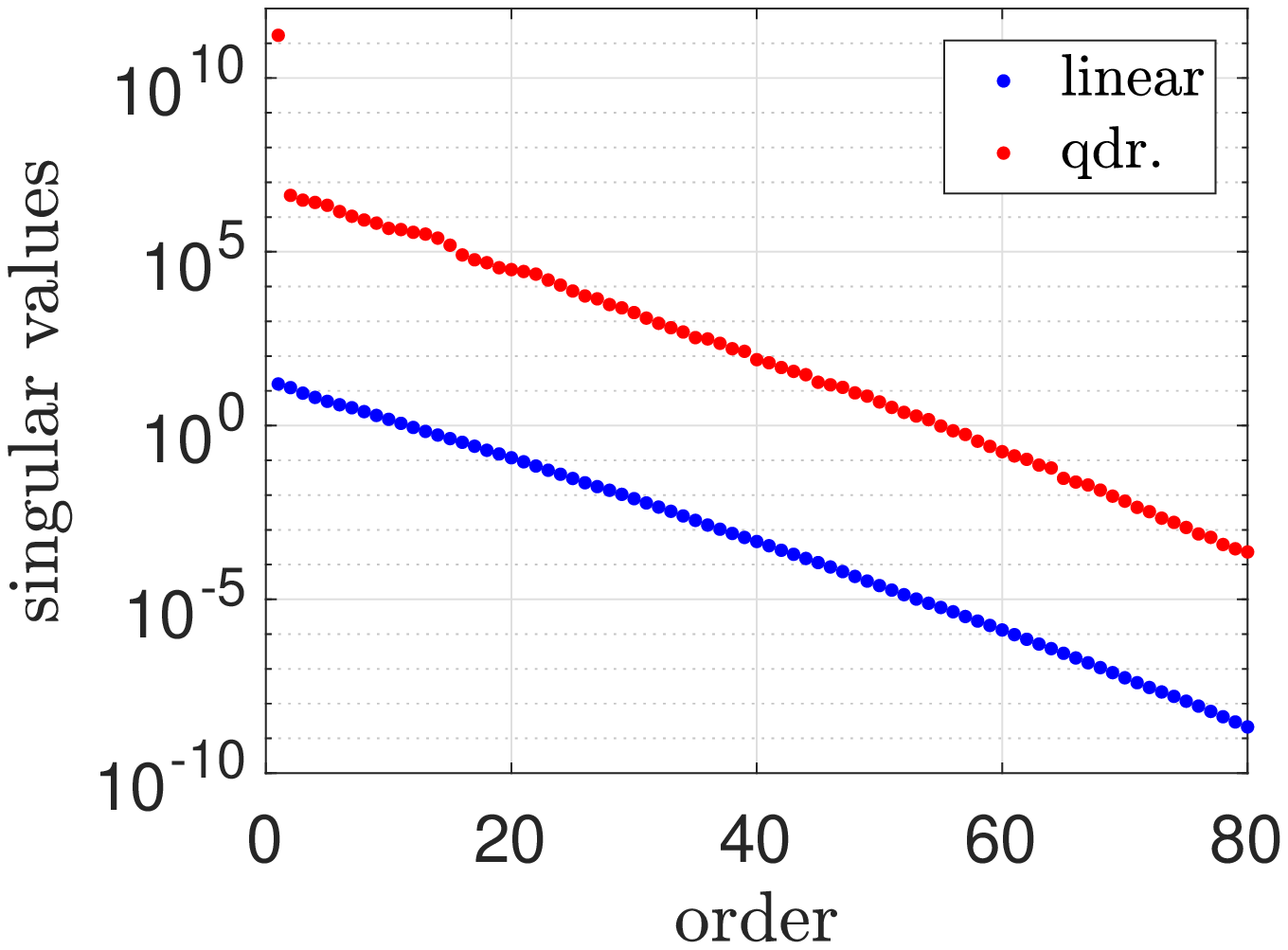}
\hspace{5mm}
\includegraphics[width=6cm]{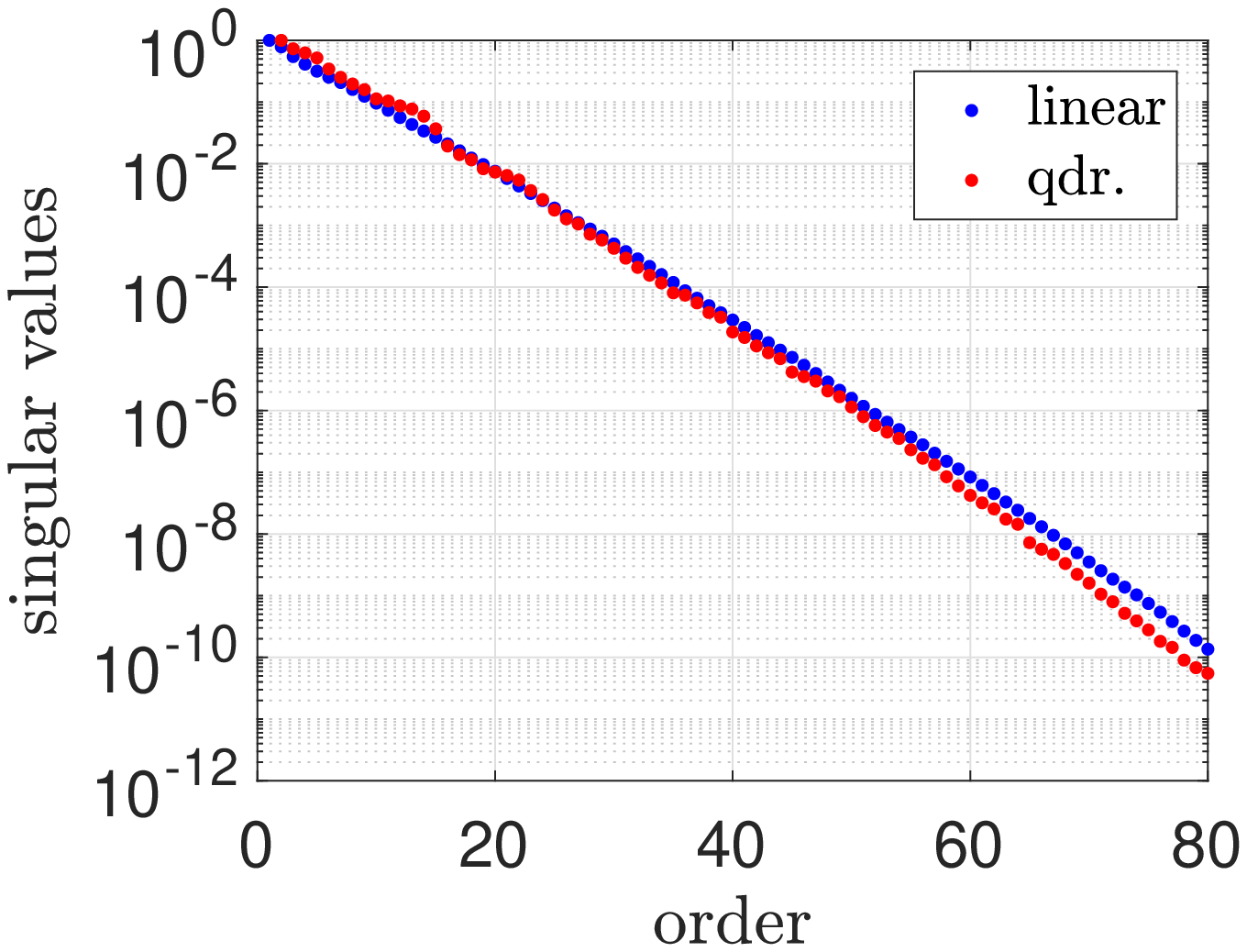}
\end{center}
\caption{Singular values (left) and their normalized values (right)
  for the two dynamical systems with respect to indefinite output matrix.}
\label{fig:ind-sv}
\end{figure}
%%%%%%%%%%%%%%%%%%%%%%%%%%%%%%%%%%%%%%%%%%%%%%%%%%%%%%%%%%%%%%%%%%%%%%%%%%%%%

The ROMs are computed as in Section~\ref{sec:example-definite}.
The same Runge-Kutta method with identical tolerances is used
for solving the initial value problems, where all initial values
are zero.
Figure~\ref{fig:ind-error} shows the resulting discrete approximations
of the error measures~(\ref{def-errors}).
The behavior of the errors is similar to the previous example in
Section~\ref{sec:example-definite}.
Concerning the relative errors, note that the exact quadratic output
features many zeros.

%%% Figure: Errors %%%%%%%%%%%%%%%%%%%%%%%%%%%%%%%%%%%%%%%%%%%%%%%%%%%%%%%%%%
\begin{figure}
\begin{center}
\includegraphics[width=6cm]{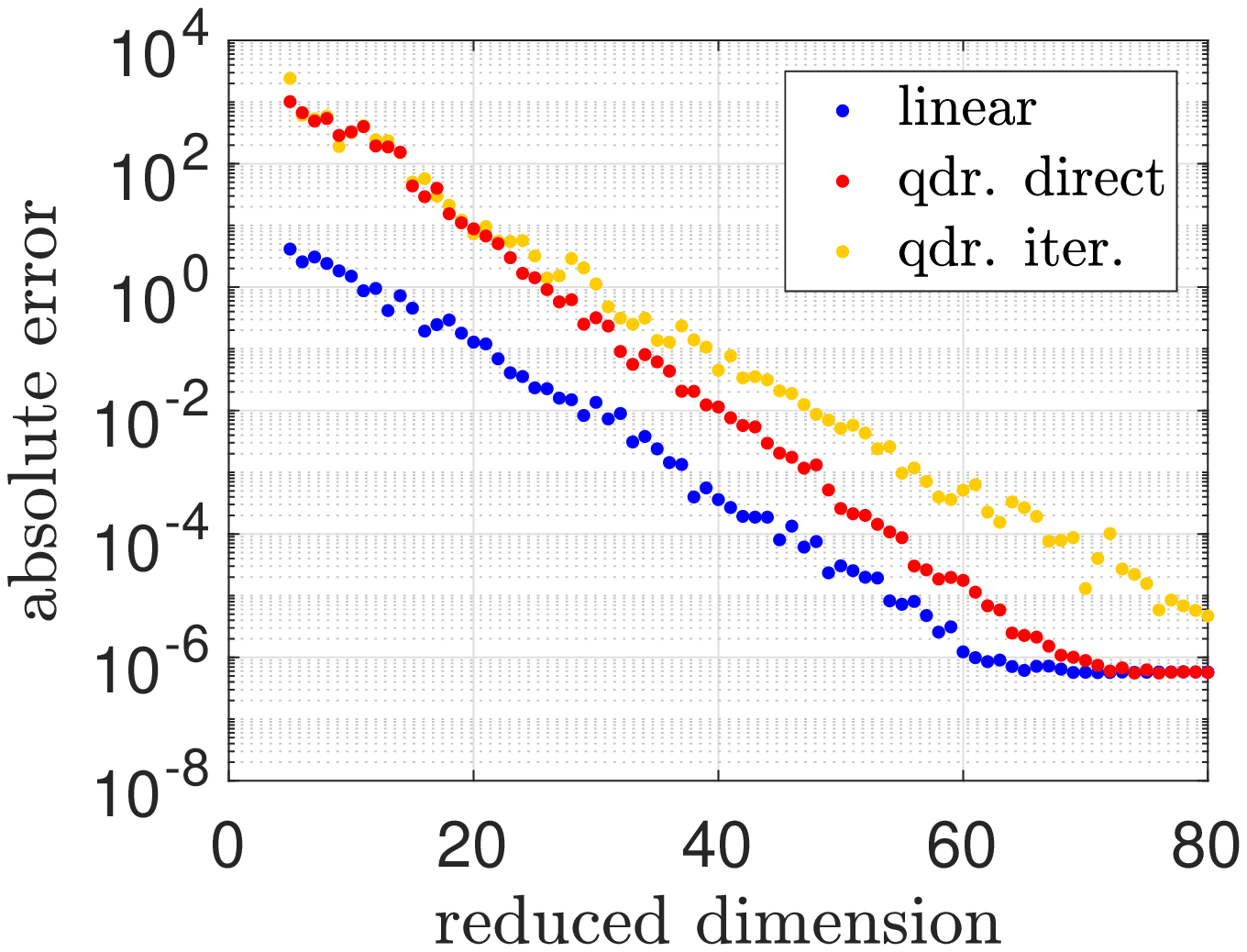}
\hspace{5mm}
\includegraphics[width=6cm]{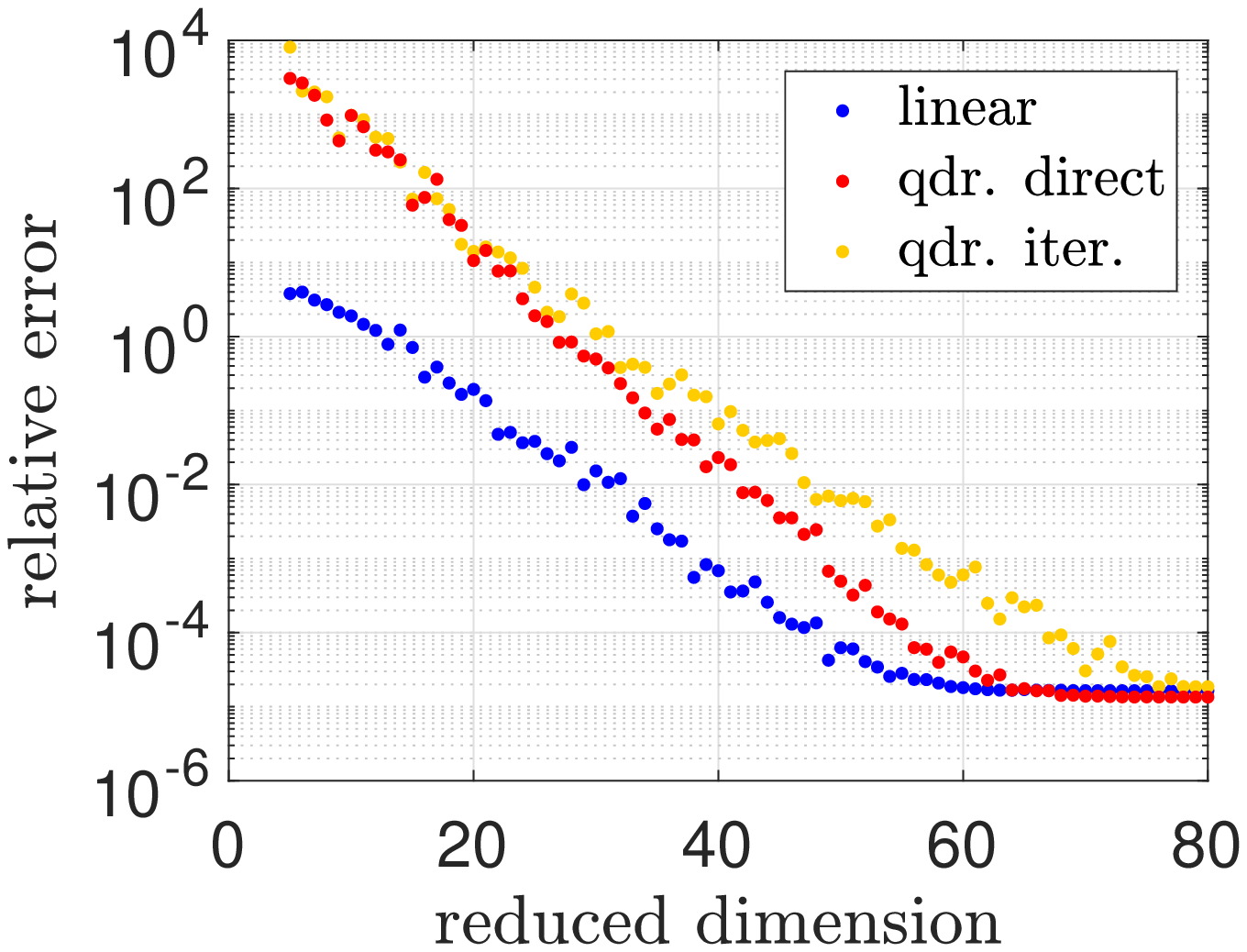}
\end{center}
\caption{Maximum absolute errors (left) and mean relative errors (right)
  for ROMs of different dimensions in the case of indefinite output matrix.}
\label{fig:ind-error}
\end{figure}
%%%%%%%%%%%%%%%%%%%%%%%%%%%%%%%%%%%%%%%%%%%%%%%%%%%%%%%%%%%%%%%%%%%%%%%%%%%%%

%We address the computational effort briefly.
%The linear dynamical system~(\ref{simo-system}) features a dense
%matrix $L^\top \in \real^{n \times n}$, which is computed from a
%complete eigen-de\-composi\-tion~(\ref{Mdecomposition}) of the matrix~$M$.
%Consequently, its ROMs have dense output matrices
%$\bar{L}^\top \in \real^{n \times r}$.
%If the output~$y$ is required at many time points, then the
%matrix-vector multiplications with $\bar{L}^\top$ dominate the
%computation work in comparison to the time integration of the ROMs.
%% Possibly wrong.
%% Same discussion applies to ROMs in Section 4.1.
%%In Section~\ref{sec:example-definite}, this computational effort
%%does not appear due to $L^\top = I$.
%The ROMs~(\ref{system-reduced2}) of the quadratic-bilinear system~(\ref{system-stable}) yield an efficient alternative because a single output appears
%without computational overhead.

%%%%%%%%%%%%%%%%%%%%%%%%%%%%%%%%%%%%%%%%%%%%%%%%%%%%%%%%%%%%%%%%%%%%%%%%%%%%%
\subsection{Stochastic Galerkin method and variance}
We consider a mass-spring-damper configuration
from Lohmann and Eid~\cite{lohmann-eid}.
The associated linear dynamical system consists of
8~ordinary differential equations including 14~physical parameters
and is single-input-single-output (SISO).
In~\cite{pulch17}, this test example was investigated in the context of stochastic modeling, where
the physical parameters are replaced by independent random variables.
The state variables as well as the output are expressed
as a polynomial chaos expansion with $m=680$ basis polynomials, see~\cite{xiu-book}.

The stochastic Galerkin approach yields a larger linear dynamical system
(SIMO)
\begin{align} \label{galerkin}
  \begin{split}  
  \dot{x}(t) & = A x(t) + B u(t) \\[1ex] 
  w(t) & = C x(t), 
  \end{split}
\end{align} 
$x \in \real^n$, with dimension $n=5440$ and outputs $w = (w_1,\ldots,w_m)^\top$.
The first output~$w_1$ represents an approximation of the expected value
for the original single output.
The other outputs $w_2,\ldots,w_m$ produce an approximation of its
variance by
\begin{equation} \label{variance}
  {\rm Var}(t) = \sum_{i=2}^m w_i(t)^2 .
\end{equation}
The details of the above modeling can be found in~\cite{pulch17}.

As single input, we choose the harmonic oscillation
$u(t) = \sin ( \omega t )$
with frequency $\omega = 0.2$.
Initial value problems are solved with starting values zero
in the time interval $[0,T]$ with $T=2000$.
Since the stochastic Galerkin system~(\ref{galerkin}) is mildly stiff,
we apply the implicit trapezoidal rule.
Figure~\ref{fig:statistics} shows the approximations of the
expected value as well as the variance obtained from
the transient simulation.
Driven by the input signal,
the solutions become nearly periodic functions after an initial phase.

%%% Figure: Statistics %%%%%%%%%%%%%%%%%%%%%%%%%%%%%%%%%%%%%%%%%%%%%%%%%%%%%%
\begin{figure}
\begin{center}
\includegraphics[width=6cm]{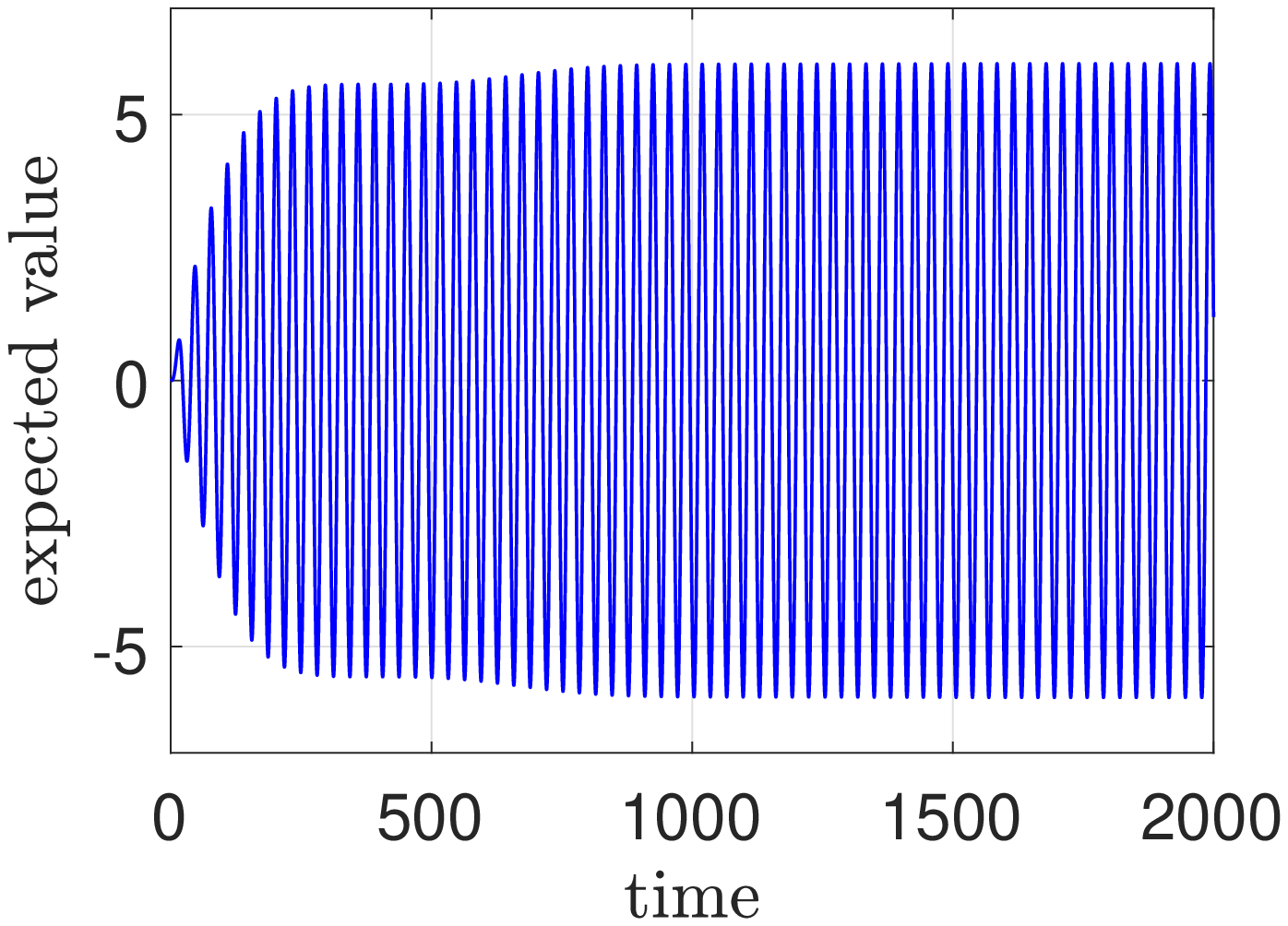}
\hspace{5mm}
\includegraphics[width=6cm]{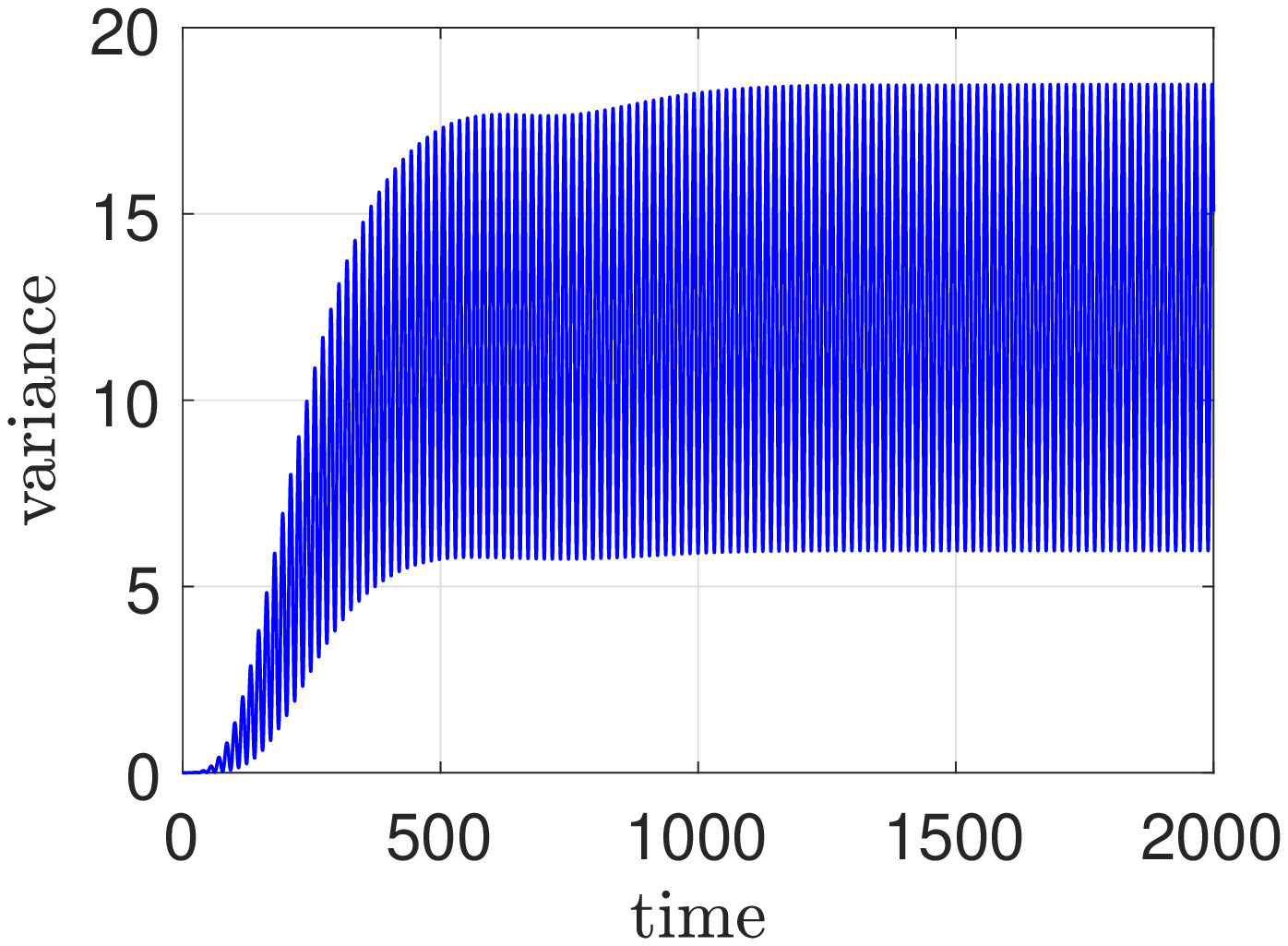}
\end{center}
\caption{Expected value (left) and variance (right) of random output
  in mass-spring-damper configuration.}
\label{fig:statistics}
\end{figure}
%%%%%%%%%%%%%%%%%%%%%%%%%%%%%%%%%%%%%%%%%%%%%%%%%%%%%%%%%%%%%%%%%%%%%%%%%%%%%
 
We construct a linear dynamical system~(\ref{dynamicalsystem})
from the stochastic Galerkin system~(\ref{galerkin}),
whose quadratic output is the variance~(\ref{variance}).
Define $L^\top \in \real^{(m-1) \times n}$ as the matrix~$C \in \real^{m \times n}$
in~(\ref{galerkin}) with its first row omitted.
It follows that $M = L L^\top$ in~(\ref{dynamicalsystem})
is symmetric and positive semi-definite of rank $m-1$.
Thus the equivalent system~(\ref{simo-system}) with $m-1$ linear outputs
defined by~$L^\top$ is already available in this application.

The associated quadratic-bilinear system~(\ref{qdr-bil-system}) is  
without bilinearity, because the property~(\ref{N-is-zero}) is satisfied.
We apply the stabilized system~(\ref{system-stable}) with
a parameter $\varepsilon = 10^{-8}$ again.

We examine the MOR by balanced truncation for this problem
comparing the reduction of the linear system~(\ref{simo-system})
and the quadratic system~(\ref{system-stable}).
In the linear system, projection matrices are obtained directly
by linear algebra algorithms.
In the quadratic system, both a direct method and
an iterative method using the ADI technique are employed.
We compute projection matrices for the reduced dimension $r_{\max} = 100$
in each approach.
Within the ADI iteration, an approximate factor for the
reachability Gramian is computed with rank $k_P = 200$.
Just the first $k_P' = 20$ columns are applied with $j = 20$ iterations
for the calculation of an approximate factor of the observability Gramian
with rank $k_Q = 400$.

The balanced truncation techniques yield singular values 
in each of the three reductions,
which are the Hankel singular values in the linear case.
Figure~\ref{fig:msd-sv} illustrates the dominant singular values
in descending order.
We recognize a faster decay of the singular values in
the quadratic system~(\ref{qdr-bil-system}).
However, the faster decrease of the singular values in the iterative method
represents an error by the approximation, because the direct approach
produces much more accurate values.

%%% Figure: Singular Values %%%%%%%%%%%%%%%%%%%%%%%%%%%%%%%%%%%%%%%%%%%%%%%%%
\begin{figure}
\begin{center}
\includegraphics[width=6cm]{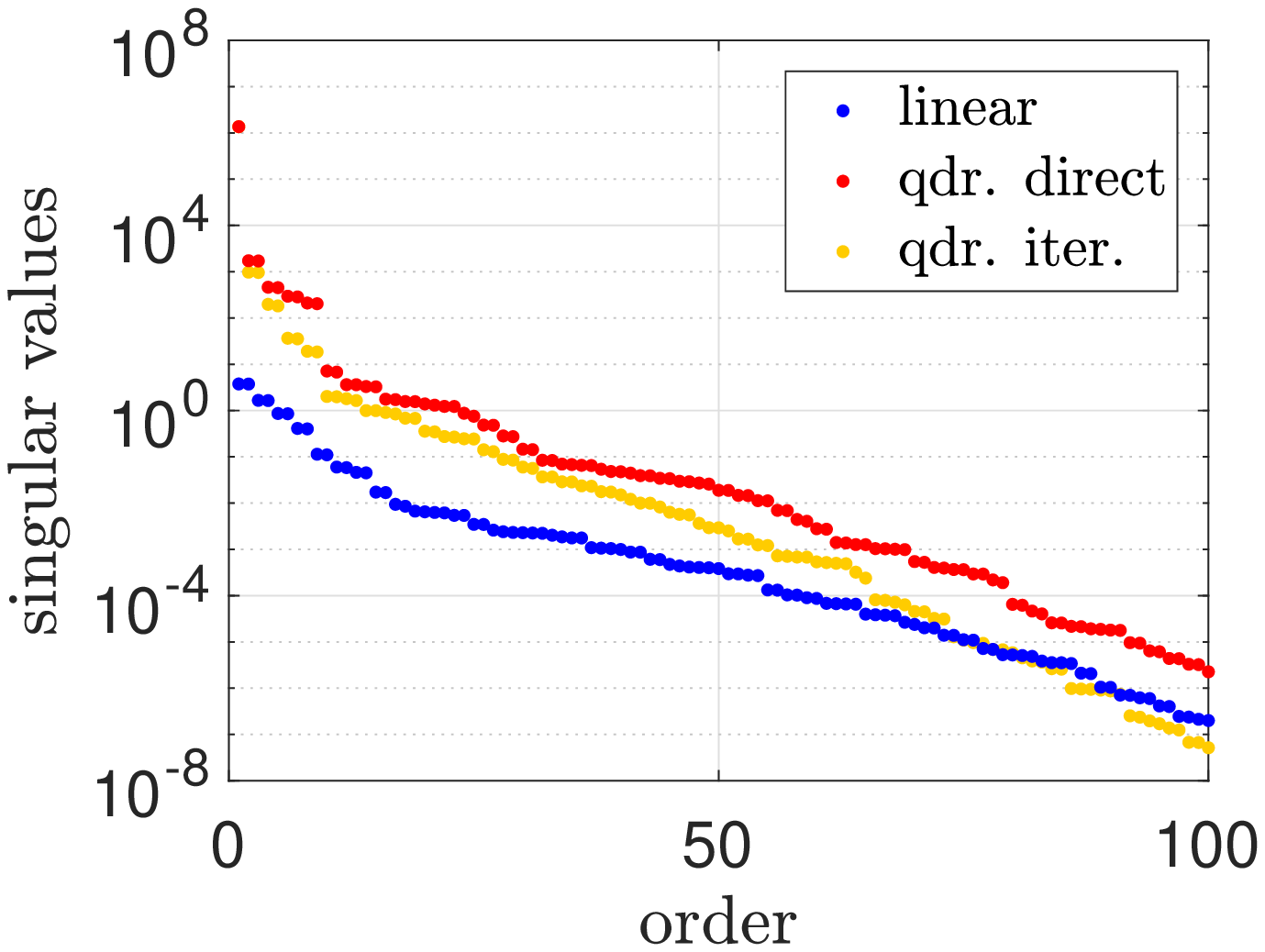}
\hspace{5mm}
\includegraphics[width=6cm]{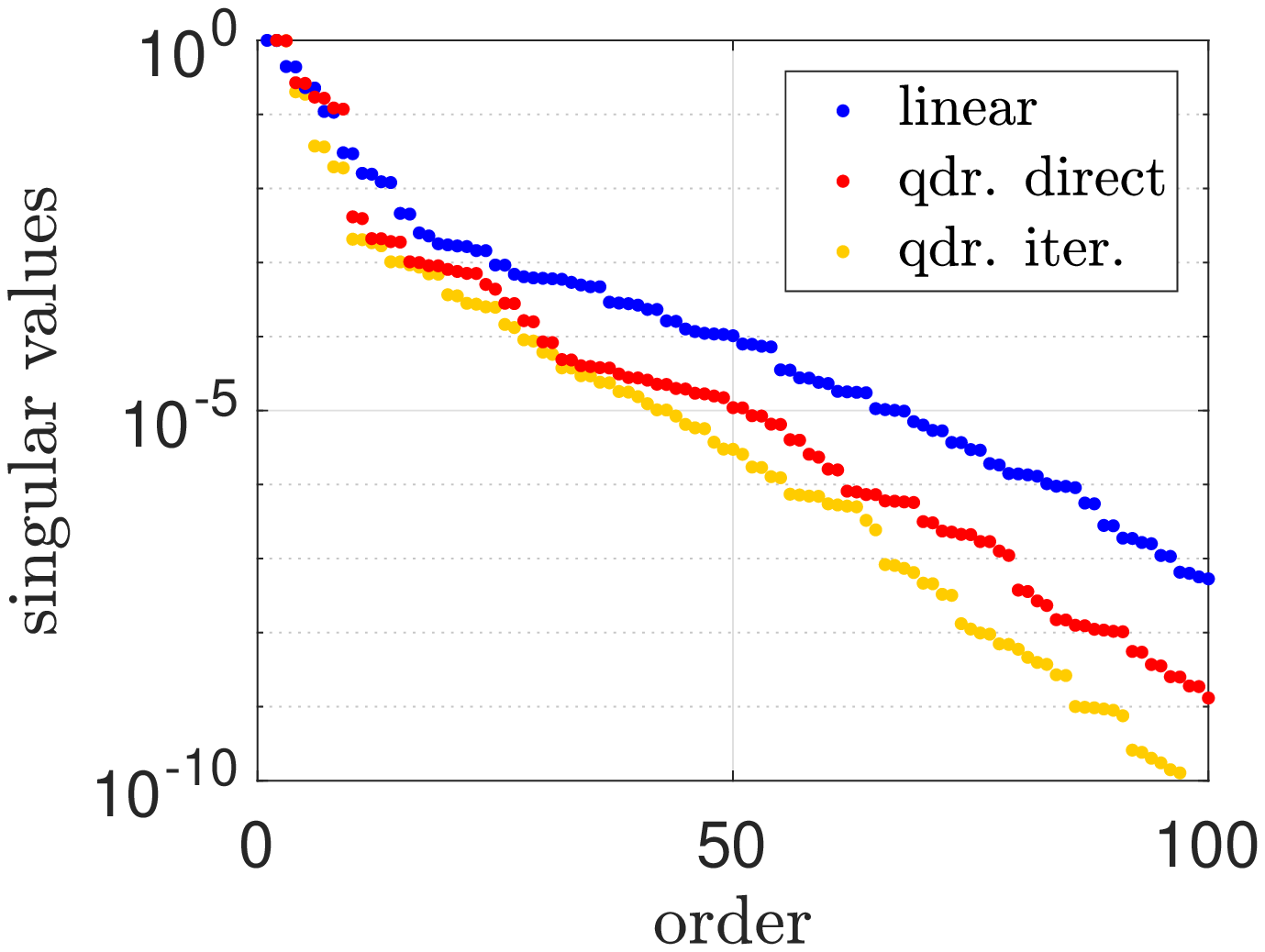}
\end{center}
\caption{Singular values (left) and their normalized values (right)
  for the two dynamical systems in mass-spring-damper example.}
\label{fig:msd-sv}
\end{figure}
%%%%%%%%%%%%%%%%%%%%%%%%%%%%%%%%%%%%%%%%%%%%%%%%%%%%%%%%%%%%%%%%%%%%%%%%%%%%%
 
Given the projection matrices with $r_{\max}$ columns,
we choose the dominant part to obtain ROMs of dimension
$r=5,6,\ldots,100$.
We arrange $\varepsilon = 0$ in the matrix~(\ref{barA}) again.
To investigate the errors of the MOR, we solve initial value problems
highly accurate in the time interval $[0,T]$ by the trapezoidal rule
with constant time step size using $5 \cdot 10^5$ time steps. 
The constant step size allows for reusing $LU$-de\-composi\-tions in
all systems.
The original system~(\ref{dynamicalsystem}) yields the reference solution.
Due to Theorem~\ref{thm:rom}, nonlinear systems of algebraic equations
are omitted in the quadratic ROMs~(\ref{system-reduced2}).
We solve the ROMs for each dimension~$r$.
The maximum absolute errors and the integral mean values of the
relative errors are depicted in Figure~\ref{fig:msd-error}.
The errors decrease exponentially in each approach.
The absolute errors decay exponentially until reduced dimension
$r \approx 90$, and stagnate thereafter.
Our tests suggest that this stagnation is due to the accuracy of the
time integration method.
Including more time steps in the integration routine would remove this
stagnation.
Furthermore, the iteration technique produces approximations of the
same quality as the direct approach.
The relative error is very large for low dimensions in all approaches,
because the exact values of the output are close to zero for small times.

%%% Figure: Errors %%%%%%%%%%%%%%%%%%%%%%%%%%%%%%%%%%%%%%%%%%%%%%%%%%%%%%%%%%
\begin{figure}
  \begin{center}
    \includegraphics[width=6cm]{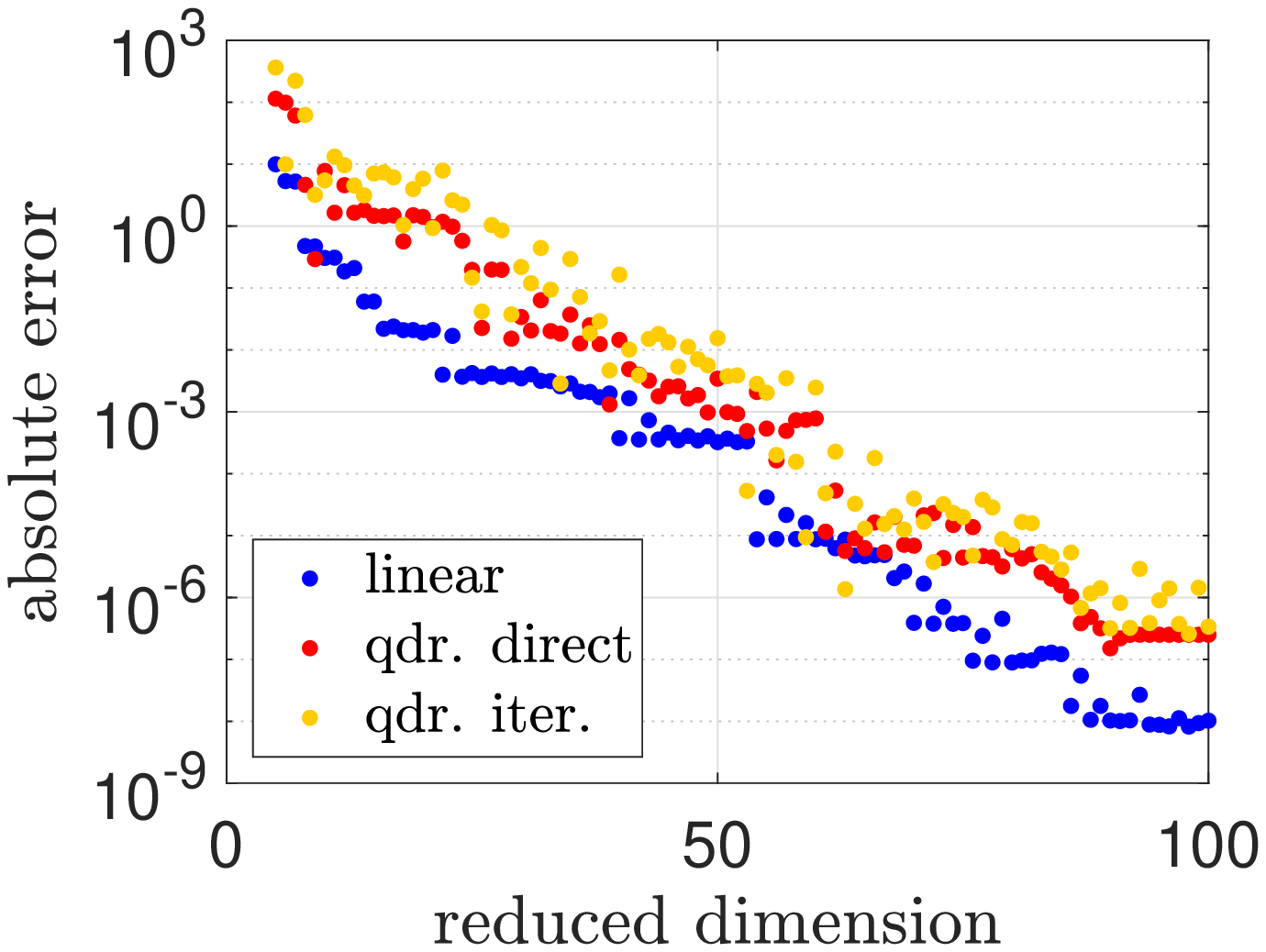}
    \hspace{5mm}
    \includegraphics[width=6cm]{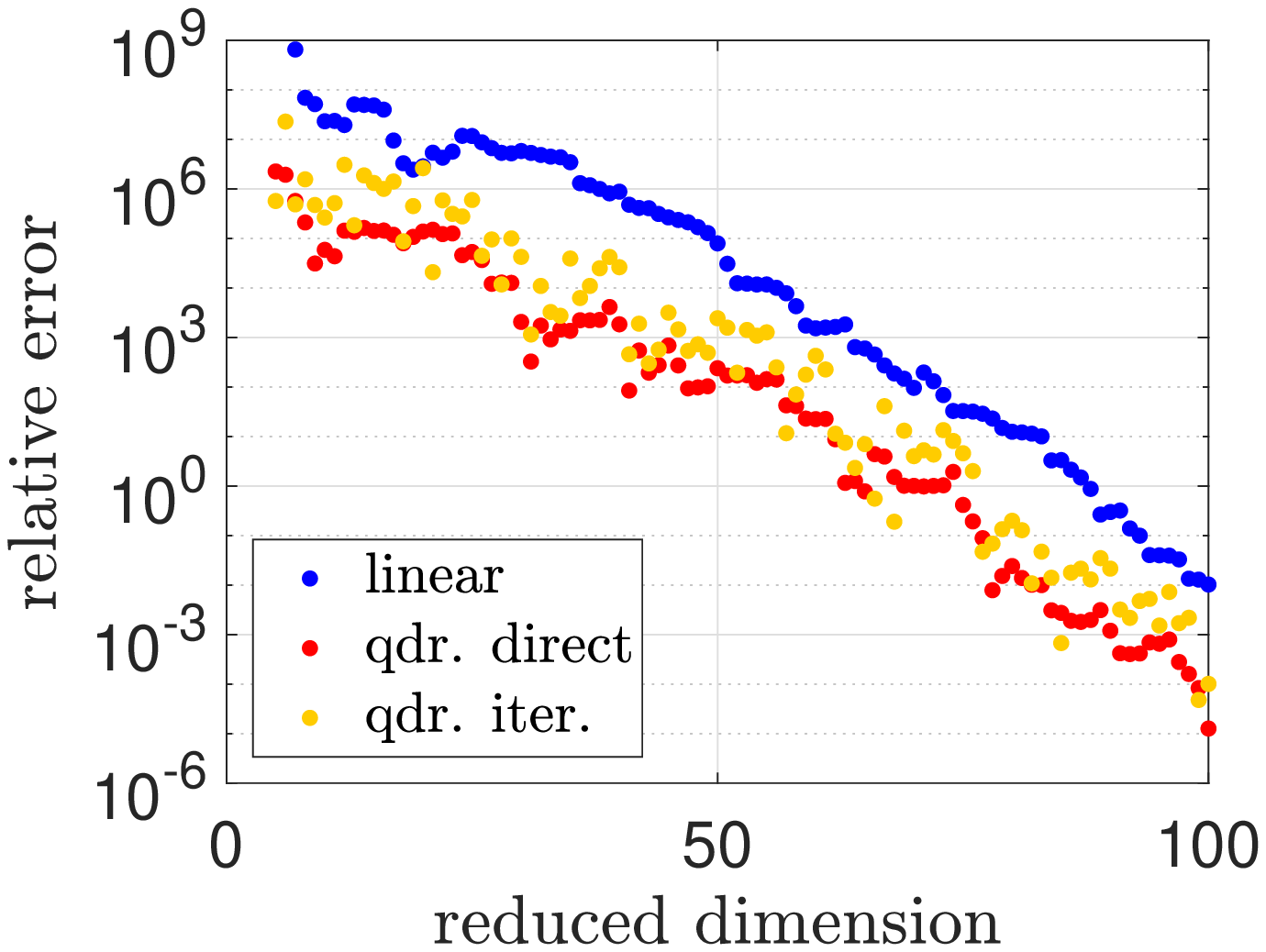}
  \end{center}
  \caption{
    Maximum absolute errors (left) and mean relative errors (right)
    for ROMs of different dimensions in mass-spring-damper example.}
  \label{fig:msd-error}
\end{figure}
%%%%%%%%%%%%%%%%%%%%%%%%%%%%%%%%%%%%%%%%%%%%%%%%%%%%%%%%%%%%%%%%%%%%%%%%%%%%%

%%%%%%%%%%%%%%%%%%%%%%%%%%%%%%%%%%%%%%%%%%%%%%%%%%%%%%%%%%%%%%%%%%%%%%%%%%%%%
%%%                          Conclusions                                  %%%
%%%%%%%%%%%%%%%%%%%%%%%%%%%%%%%%%%%%%%%%%%%%%%%%%%%%%%%%%%%%%%%%%%%%%%%%%%%%%

\newpage

\section{Conclusions}
We have investigated two approaches for model order reduction of linear dynamical systems with an output of interest that is quadratic in the state variables. This problem can be recast, equivalently, as a linear dynamical system with multiple outputs, or as a quadratic-bilinear system with a single output. Our model order reduction approaches implement the method of balanced truncation for each of these two recast systems. Balanced truncation requires solutions to Lyapunov equations. We find that manipulation of large matrices necessary to solve the Lyapunov equations motivate the use of approximate or iterative approaches, notably the alternating direction implicit method, in both linear and quadratic systems.

Our numerical examples demonstrate that both model order reduction approaches can achieve significant accuracy with a much smaller dynamical system.
For computing the output quantities of interest, the quadratic-bilinear systems are advantageous because they require computation of only a single scalar output.
In contrast, computation of the output quantity of interest from linear dynamical systems requires to compute multiple outputs.
The alternating direction implicit method is not possible in the case of a large number of outputs.
Alternatively, our numerical computations show that the iterative solution is both feasible and faster than a direct solution in the case of the quadratic-bilinear system.
We suppose that a further tuning of the iteration settings may still improve
the efficiency of the used technique, which takes further investigations.

%%%%%%%%%%%%%%%%%%%%%%%%%%%%%%%%%%%%%%%%%%%%%%%%%%%%%%%%%%%%%%%%%%%%%%%%%%%%%
%%%                           References                                  %%%
%%%%%%%%%%%%%%%%%%%%%%%%%%%%%%%%%%%%%%%%%%%%%%%%%%%%%%%%%%%%%%%%%%%%%%%%%%%%%

\clearpage

\end{document}